\def\op{\operatorname}
\def\mmod{\kern-1pt\operatorname{-mod}}
\newtheorem{theorem}{Theorem}[section]
\newtheorem{lemma}[theorem]{Lemma}
\newtheorem{definition}[theorem]{Definition}
\newtheorem{example}[theorem]{Example}
\newtheorem{remark}[theorem]{Remark}
\newtheorem{proposition}[theorem]{Proposition}
\newtheorem{corollary}[theorem]{Corollary}
\theoremstyle{proposition}
\numberwithin{equation}{section}
\begin{document}

\title[Category $\mathscr{X}$ ]{Complex representations of reductive algebraic groups with Frobenius maps in the category $\mathscr{X}$ }

\author{Junbin Dong}
\address{Institute of Mathematical Sciences, ShanghaiTech University, 393 Middle Huaxia Road, Pudong, Shanghai 201210, China.}
\email{dongjunbin@shanghaitech.edu.cn}

\subjclass[2010]{20C07, 20G05, 20F18}

\date{September 5, 2023}

\keywords{Reductive algebraic group,  induced module,  highest weight category.}

\begin{abstract}
In this paper we introduce an abelian  category  $\mathscr{X}(\bf G)$ to study the complex representations of a  reductive algebraic groups $\bf G$ with Frobenius map. We classify all the simple objects in $\mathscr{X}(\bf G)$ and show that this category  is a highest weight category.
\end{abstract}

\maketitle

\section{Introduction}

Let ${\bf G}$ be a connected reductive algebraic  group defined over the finite field $\mathbb{F}_q$ of $q$ elements. The rational representations of reductive  algebraic groups have been deeply studied and there have already been many fruitful results on this topic (see \cite{J}).
In this paper we consider the abstract representations of ${\bf G}$  over a field  $\Bbbk$.  When $\op{char}\Bbbk= \op{char} \bar{\mathbb{F}}_q$,
Borel and Tits determined all finite-dimensional abstract representations of ${\bf G}$ in \cite{BT}.  In the same paper, they showed that except the trivial representation, all other irreducible representations of $\Bbbk {\bf G}$ (the group algebra of $\bf G$) are infinite-dimensional if ${\bf G}$ is  semisimple and $\Bbbk $ is infinite with $\op{char}\Bbbk\neq \op{char} \bar{\mathbb{F}}_q$.  So studying the abstract representations in cross characteristic case is a very challenging problem since most of  the representations are infinite-dimensional.  Denote by $G_{q^a}$ the set of $\mathbb{F}_{q^a}$-points of $\bf G$, then we have ${\bf G}=\bigcup G_{q^a}$.  Xi made the  first attempt  to study the abstract representations of ${\bf G}$ over ${\bf \Bbbk}$ by taking the direct limit of the finite-dimensional representations of $G_{q^a}$ and he got many interesting results in \cite{Xi}. In particular, he showed that  the infinite-dimensional Steinberg module is irreducible when
$\op{char}\Bbbk=0$ or $\op{char}\Bbbk= \op{char} \bar{\mathbb{F}}_q$.  Later, Yang removed this restriction on $\op{char} \Bbbk$ and proved the irreducibility of the Steinberg module for any field $\Bbbk$  (see \cite{Yang}).  Recently, Putman and Snowden showed that  the Steinberg representation  is always irreducible for any infinite  Chevalley group (see \cite{PS}).
Let ${\bf B}$ be  a Borel subgroup of ${\bf G}$.  Motivated by Xi's idea in \cite{Xi},  the author and his collaborator studied the structure of the permutation module $\Bbbk [{\bf G}/{\bf B}]$  in  \cite{CD1} for the cross characteristic case and in \cite{CD2} for the defining characteristic case.
In particular, we get all the composition factors of $\Bbbk [{\bf G}/{\bf B}]$.

Let $\bf T$ be a maximal  torus contained in the  Borel subgroup $\bf B$ and $\theta$  be   a character of ${\bf T}$. Thus $\theta$  can also be regarded as a character of ${\bf B}$ by letting ${\bf U}$ (the unipotent radical of ${\bf B}$) act trivially.  The naive induced module $\mathbb{M}(\theta)=\Bbbk{\bf G}\otimes_{\Bbbk{\bf B}}{\Bbbk}_\theta$ is a natural object for study. When $\theta$ is trivial, this induced module is the permutation module  $\Bbbk [{\bf G}/{\bf B}]$.    We studied  the induced module $\mathbb{M}(\theta)$  for any field $\Bbbk$ with $\op{char}\Bbbk\neq \op{char} \bar{\mathbb{F}}_q$ or $\Bbbk=\bar{\mathbb{F}}_q$ in \cite{CD3}.  The  induced module $\mathbb{M}(\theta)$  has a composition series (of finite length) if $\op{char}\Bbbk\neq \op{char} \bar{\mathbb{F}}_q$. In the case $\Bbbk=\bar{\mathbb{F}}_q$ and $\theta$ is a rational character, $\mathbb{M}(\theta)$ has such composition series if and only if $\theta$ is antidominant. In both cases, the composition factors of $\mathbb{M}(\theta)$  are $E(\theta)_J$ with $J\subset I(\theta)$ (see Section 2 for the explicit definition). Therefore we have obtained a large class of irreducible modules of ${\bf G}$, almost of which are infinite-dimensional.

Now let $\Bbbk$ be an algebraically closed field of characteristic zero, e.g., $\Bbbk = \mathbb{C}$. Denote by  $\Bbbk{\bf G}$-Mod  the category of all  $\Bbbk {\bf G}$-modules. Obviously, this category is too large to study. On the other hand, since almost representations of ${\bf G}$ are infinite-dimensional, the category of finite-dimensional $\Bbbk {\bf G}$-modules is also not very good enough.
In the paper \cite{D1}, we introduce the principal representation category $\mathscr{O}({\bf G})$ which is defined to be the  full subcategory of $\Bbbk{\bf G}$-Mod whose object has composition factors $E(\theta)_J$'s. The category  $\mathscr{O}({\bf G})$   was supposed to have many good properties.  In particular, we conjectured that this category  is a highest weight category in the sense of \cite{CPS}. But soon after,  Chen constructed a counter example (see \cite{Chen}) to show that this conjecture is not true.  His example also tells us  that the  category $\mathscr{O}({\bf G})$ may be more complicated than we thought even for ${\bf G}= SL_2(\overline{\mathbb{F}}_q)$.

We may explore other good category  instead of  the category $\mathscr{O}({\bf G})$. The good category  should  contain the irreducible modules $E(\theta)_J$'s and many other abstract representations we have obtained. Note that  almost all the abstract representations of ${\bf G}$ are infinite-dimensional and the category $\Bbbk{\bf G}$-Mod is not semisimple.  Thus we hope the  good category  we are looking for  satisfies certain good properties such as ``finite-ness" and ``semi-simplicity", which is like  the BGG category $\mathscr{O}$ in the representations of complex semisimple Lie algebras.  In \cite{D2},  we have constructed  and studied a category which satisfies these conditions for ${\bf G}= SL_2(\overline{\mathbb{F}}_q)$. For general reductive algebraic group $\bf G$,  the category  $\mathscr{X}(\bf G)$ introduced in this paper  meets our expectations  and  has many good properties.  In particular,   $\mathscr{X}(\bf G)$  is a highest weight category in the sense of \cite{CPS}.

This paper is organized as follows: Section 2 contains some basic definitions, notations  and some known results about some special modules, such as  $\mathbb{M}(\theta)$,  $E(\theta)_J$ and $\nabla(\theta)_J$.  In Section 3, we introduce a new version of  Lie-Kolchin theorem for the study of  abstract representations of reductive algebraic groups.  In Section 4,  we introduce the category  $\mathscr{X}(\bf G)$  and study its basic properties.  Section 5 is devoted to show that $\mathscr{X}(\bf G)$  is an abelian category. We also classify all the simple objects in $\mathscr{X}(\bf G)$ in this section.  In Section 6 we prove that $\mathscr{X}(\bf G)$ has enough injective objects and  is a highest weight category. Moreover, we study the structure of certain finite-dimensional quasi-hereditary algebra  $\mathscr{A}_n$ whose representations are  closely related to the category $\mathscr{X}(\bf G)$.

\section{Preliminaries}

As in the introduction, let ${\bf G}$ be a connected reductive algebraic group defined over $\mathbb{F}_q$ with the standard Frobenius morphism $F$ induced by the automorphism $x\mapsto x^q$ on $\bar{\mathbb{F}}_q$. Let ${\bf B}$ be an $F$-stable Borel subgroup, and ${\bf T}$ be an $F$-stable maximal torus contained in ${\bf B}$, and ${\bf U}=R_u({\bf B})$ be the ($F$-stable) unipotent radical of ${\bf B}$.
We identify ${\bf G}$ with ${\bf G}(\bar{\mathbb{F}}_q)$ and do likewise for the various subgroups of ${\bf G}$ such as ${\bf B}, {\bf T}, {\bf U}$ $\cdots$.  For an algebraic group ${\bf H}$ defined over $\mathbb{F}_q$ (such as  ${\bf G}, {\bf B},  {\bf T}, {\bf U}$),  we denote by $H_{q^a}$ its $\mathbb{F}_{q^a}$-points.  We denote by $\Phi=\Phi({\bf G};{\bf T})$ the  root system of ${\bf G}$, and by $\Phi^+$ (resp. $\Phi^-$) the set of positive (resp. negative) roots determined by ${\bf B}$. Let $W=N_{\bf G}({\bf T})/{\bf T}$ be the corresponding Weyl group. For each $w \in W$, we denote by $\dot{w}$  one representative  in $N_{\bf G}({\bf T})$. Let  $\Delta=\{\alpha_i\mid i\in I\}$ be  the set of simple roots and $S=\{s_{\alpha_i}\mid i\in I\}$ be the corresponding simple reflections in $W$. We often identify $\Delta$, $S$ with $I$ if there is no confusion.  For each $J\subset I$, let $\Phi_J$ be the corresponding root system and ${\bf P}_J$ (resp. $W_J$)  be the standard parabolic subgroup of ${\bf G}$ (resp. $W$). Let $w_J$ be the longest element in $W_J$.

For each $\alpha\in\Phi$, let ${\bf U}_\alpha$ be the root subgroup corresponding to $\alpha$ and we fix an isomorphism $\varepsilon_\alpha: \bar{\mathbb{F}}_q\rightarrow{\bf U}_\alpha$ such that $t\varepsilon_\alpha(c)t^{-1}=\varepsilon_\alpha(\alpha(t)c)$ for any $t\in{\bf T}$ and $c\in\bar{\mathbb{F}}_q$. For any $w\in W$, we set
 $$\Phi_w^-=\{\alpha \in \Phi^+ \mid w(\alpha)\in \Phi^- \}, \ \ \Phi_w^+=\{\alpha \in \Phi^+ \mid w(\alpha)\in \Phi^+ \}.$$
Let ${\bf U}_w$ (resp. ${\bf U}_w'$) be the subgroup of ${\bf U}$ generated by all ${\bf U}_\alpha$  with $\alpha \in\Phi_w^-$ (resp. $\alpha\in\Phi_w^+$). The following properties are well known (see \cite{Car}):

\noindent(a) For  any root  $\alpha\in \Phi$,  we have $\dot{w}{\bf U}_\alpha \dot{w}^{-1}={\bf U}_{w(\alpha)}$;

\noindent(b) $ {\bf U}_w$ and ${\bf U}'_w$ are subgroups of  ${\bf U}$, and  $\dot{w}{\bf U}'_w\dot{w}^{-1} \subset {\bf U}$;

\noindent(c) The multiplication map ${\bf U}_w\times{\bf U}_w'\rightarrow{\bf U}$ is a bijection;

\noindent(d)  Let $\Phi^+= \{\gamma_1,\gamma_2,\dots,\gamma_n\}$. Then we have
 ${\bf U}= {\bf U}_{\gamma_1}{\bf U}_{\gamma_2}\dots {\bf U}_{\gamma_n}$.
 Each element  $u \in {\bf U}$ is uniquely expressible in the form
$u=u_{1}u_{2}\dots u_{n}$ with $u_{i}\in  {\bf U}_{\gamma_i}$;

\noindent(e) ({\it Commutator relations}) Given two positive roots $\alpha$ and $\beta$,  there exist a total ordering on $\Phi^+$ and integers $c^{mn}_{\alpha \beta}$ such that
$$[\varepsilon_\alpha(a),\varepsilon_\beta(b)]:=\varepsilon_\alpha(a)\varepsilon_\beta(b)\varepsilon_\alpha(a)^{-1}\varepsilon_\beta(b)^{-1}=
\underset{m,n>0}{\prod} \varepsilon_{m\alpha+n\beta}(c^{mn}_{\alpha \beta}a^mb^n)$$
for all $a,b\in \bar{\mathbb{F}}_q$, where the product is over all
integers $m,n>0$ such that $m\alpha+n\beta \in \Phi^{+}$, taken
according to the chosen ordering.

For different  chosen ordering of  $\Phi^+$,  the expression of an element $u\in {\bf U}$ in (d) may be different. However for any  simple root $\alpha$, its ${\bf U}_{\alpha}$-component is always the same according to (e).  Thus we denote by $\omega_{\alpha}(u)$ the  ${\bf U}_{\alpha}$-component of $u$ for each simple root $\alpha$. Let $X$ be a subset of $\Phi^+$ which makes  ${\bf U}_X$  a subgroup of ${\bf U}$,
then $X$ is said to be an enclosed subset of $\Phi^+$.

Let $\Bbbk$ be an algebraically closed field of characteristic zero.  In this paper, we consider the abstract representations of  ${\bf G}$ over $\Bbbk$. For any finite subset $X$ of ${\bf G}$, let $\underline{X}:=\displaystyle \sum_{x\in X}x \in \Bbbk {\bf G}$.  This notation will be frequently used in later discussion.

 Let $\widehat{\bf T}$ be the character group of ${\bf T}$.   Each $\theta\in\widehat{\bf T}$ can be regarded as a character of ${\bf B}$ by letting ${\bf U}$ act trivially.
Denote by  ${\Bbbk}_\theta$ the corresponding ${\bf B}$-module and we consider the induced module $\mathbb{M}(\theta)=\Bbbk{\bf G}\otimes_{\Bbbk{\bf B}}{\Bbbk}_\theta$. Let ${\bf 1}_{\theta}$ be a fixed nonzero element in ${\Bbbk}_\theta$.  For $x\in {\bf G}$, we abbreviate $x{\bf 1}_{\theta}:=x\otimes{\bf 1}_{\theta}\in\mathbb{M}(\theta)$ for simplicity. It is not difficult to see that $\mathbb{M}(\theta)$ has a basis $\{u \dot{w} {\bf 1}_{\theta}\mid w\in W,  u\in {\bf U}_{w^{-1}}\}$ by the Bruhat decomposition, where $\dot{w}$ is a fixed representative of $w \in W$.
According to \cite[Proposition 2.2]{CD3}, $\mathbb{M}(\theta)$ is indecomposable.

For each $i \in I$, let ${\bf G}_i$ be the subgroup of $\bf G$ generated by ${\bf U}_{\alpha_i}, {\bf U}_{-\alpha_i}$ and set ${\bf T}_i= {\bf T}\cap {\bf G}_i$. Then there is a natural homomorphism $\varphi_i: SL_2(\bar{\mathbb{F}}_q) \rightarrow {\bf G}_i$ such that
$$\varphi_i\left(\begin{array}{cc}1 &\ c\\0 &\ 1\end{array}\right)=\varepsilon_{\alpha_i}(c),\quad \varphi_i\left(\begin{array}{cc}1 &\ 0\\c &\ 1\end{array}\right)=\varepsilon_{-\alpha_i}(c)$$
for any $c\in\bar{\mathbb{F}}_q$.  For $\theta\in\widehat{\bf T}$, define the subset $I(\theta)$ of $I$ by $$I(\theta)=\{i\in I \mid \theta| _{{\bf T}_i} \ \text {is trivial}\}.$$
The Weyl group $W$ acts naturally on $\widehat{\bf T}$ by
$$(w\cdot \theta ) (t):=\theta^w(t)=\theta(\dot{w}^{-1}t\dot{w}), \quad \forall \theta\in \widehat{\bf T}.$$

For $J\subset I(\theta)$, we let ${\bf G}_J$ be the subgroup of $\bf G$ generated by ${\bf G}_i$, $i\in J$. Choose a representative $\dot{w}\in {\bf G}_J$ for each $w\in W_J$, and then  the element $w{\bf 1}_\theta:=\dot{w}{\bf 1}_\theta$ is well-defined. For $J\subset I(\theta)$, we set
$$\eta(\theta)_J=\sum_{w\in W_J}(-1)^{\ell(w)}w{\bf 1}_{\theta},$$
where $\ell(w)$ is the length of  $w\in W$.  Let $\mathbb{M}(\theta)_J=\Bbbk{\bf G}\eta(\theta)_J$ be the $\Bbbk {\bf G}$-module which is generated by $\eta(\theta)_J$. For $w\in W$, let  $\mathscr{R}(w)=\{i\in I\mid ws_i<w\}$.  For any subset $J\subset I$, we set
$$
\aligned
X_J &\ =\{x\in W\mid x~\op{has~minimal~length~in}~xW_J\}.
\endaligned
$$
We have the following proposition.

\begin{proposition}\cite[Proposition 2.5]{CD3} \label{MJ=KUW}
For any $J\subset I(\theta)$, the $\Bbbk {\bf G}$-module $\mathbb{M}(\theta)_J$ has the form
\begin{align} \label{MJ}
\mathbb{M}(\theta)_J=\sum_{w\in X_J}\Bbbk{\bf U}\dot{w}\eta(\theta)_J=\sum_{w\in X_J}\Bbbk{\bf U}_{w_Jw^{-1}}\dot{w}\eta(\theta)_J,
\end{align}
and the set $\{u\dot{w}\eta(\theta)_J \mid w\in X_J, u\in {\bf U}_{w_Jw^{-1}} \}$ is a basis of $\mathbb{M}(\theta)_J$.
\end{proposition}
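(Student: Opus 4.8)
The plan is to analyse $\mathbb{M}(\theta)_J=\Bbbk{\bf G}\eta(\theta)_J$ by combining the Bruhat-type decomposition of ${\bf G}$ relative to ${\bf P}_J$ with the behaviour of $\eta:=\eta(\theta)_J$ under ${\bf P}_J$, and then to establish linear independence by comparing with the known basis $\{u\dot{y}{\bf 1}_\theta\mid y\in W,\ u\in{\bf U}_{y^{-1}}\}$ of $\mathbb{M}(\theta)$.

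First I would record the elementary equivariance properties of $\eta$ that make the argument run, all of which use $J\subseteq I(\theta)$. Since $\theta|_{{\bf T}_i}$ is trivial for $i\in J$, one checks that $\theta$ is $W_J$-invariant and trivial on ${\bf T}\cap{\bf G}_J$; from this I get (i) $t\eta=\theta(t)\eta$ for $t\in{\bf T}$, (ii) $\dot v\eta=(-1)^{\ell(v)}\eta$ for $v\in W_J$ (the chosen representatives lie in ${\bf G}_J$, and $v\mapsto s_iv$ flips the sign), and (iii) $u'\eta=\eta$ for $u'$ in the unipotent radical ${\bf U}_{P_J}$ of ${\bf P}_J$ (conjugating $u'$ past $\dot v\in{\bf G}_J$ keeps it inside ${\bf U}_{P_J}\subseteq{\bf U}$, which fixes ${\bf 1}_\theta$). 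Writing the Levi as ${\bf L}_J={\bf T}{\bf G}_J$ and ${\bf P}_J={\bf U}_{P_J}{\bf L}_J$, properties (i) and (iii) show that the $\Bbbk$-span of $\{p\eta\mid p\in{\bf P}_J\}$ is exactly $\Bbbk{\bf G}_J\eta$, the Steinberg-type module of ${\bf G}_J$ generated by $\eta$.

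The key structural input is then the description of this Levi module: $\Bbbk{\bf G}_J\eta$ has spanning set (indeed basis) $\{\bar u\eta\mid \bar u\in{\bf U}_J\}$, where ${\bf U}_J={\bf U}\cap{\bf G}_J$ is generated by the ${\bf U}_\alpha$, $\alpha\in\Phi_J^+$. This is the known structure of the (infinite-dimensional) Steinberg module of ${\bf G}_J$ with trivial central character; alternatively it is the present proposition for the pair $({\bf G}_J,J)$, obtainable by induction on $|J|$. Granting it, the spanning statement follows: decomposing ${\bf G}=\bigsqcup_{w\in X_J}{\bf U}_{w^{-1}}\dot w{\bf P}_J$ and writing $g=u'\dot w p$ gives $g\eta=u'\dot w(p\eta)\in u'\dot w\,\Bbbk{\bf U}_J\eta$. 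Since $w\in X_J$ satisfies $w(\Phi_J^+)\subseteq\Phi^+$, I may conjugate ${\bf U}_J$ past $\dot w$ to land inside ${\bf U}$, and a short root-system computation identifies ${\bf U}_{w^{-1}}(\dot w{\bf U}_J\dot w^{-1})={\bf U}_{w^{-1}}{\bf U}_{w\Phi_J^+}$ with ${\bf U}_{w_Jw^{-1}}$ (the two root sets $\Phi^-\setminus\Phi_J^-$ and $\Phi_J^+$, read through $w^{-1}$, are disjoint and their union is exactly $\{\alpha\in\Phi^+\mid w_Jw^{-1}(\alpha)\in\Phi^-\}$). This yields $\mathbb{M}(\theta)_J=\sum_{w\in X_J}\Bbbk{\bf U}_{w_Jw^{-1}}\dot w\eta$, and the inclusions $\Bbbk{\bf U}_{w_Jw^{-1}}\dot w\eta\subseteq\Bbbk{\bf U}\dot w\eta\subseteq\mathbb{M}(\theta)_J$ then force both displayed equalities.

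Finally, for linear independence I would expand $u\dot w\eta=\sum_{v\in W_J}(-1)^{\ell(v)}u\dot w\dot v{\bf 1}_\theta$ and put each term in standard form: because $\ell(wv)=\ell(w)+\ell(v)$ for $w\in X_J$, each summand is a nonzero scalar times a single basis vector $u''\dot{(wv)}{\bf 1}_\theta$ with distinct Weyl part $wv\in wW_J$ (absorbing a torus factor through $\theta$ and the complementary unipotent factor into ${\bf B}$). Hence the supports for different cosets $wW_J$ are disjoint, and inside $wW_J$ the unique term of maximal length $\ell(w)+\ell(w_J)$ comes from $v=w_J$ and equals a nonzero scalar times $u\dot{(ww_J)}{\bf 1}_\theta$, where $u\in{\bf U}_{w_Jw^{-1}}={\bf U}_{(ww_J)^{-1}}$ is already in standard position. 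Since $(w,u)\mapsto u\dot{(ww_J)}{\bf 1}_\theta$ is injective into the standard basis, reading off the coefficient of a maximal-length basis vector in any vanishing finite combination forces all coefficients to vanish. The hard part will be the Levi-module input in the third paragraph—pinning down $\Bbbk{\bf G}_J\eta$ exactly, rather than merely up to a triangular estimate—together with the careful root-subgroup bookkeeping when transporting ${\bf U}_J$ across $\dot w$; the equivariance properties and the independence argument are comparatively routine.
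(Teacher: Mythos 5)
The paper does not prove this proposition; it is quoted verbatim from \cite[Proposition~2.5]{CD3}, so there is no internal proof to compare against. Judged on its own terms, your outline is sound and is essentially the same circle of ideas used in \cite{CD3} and \cite{Xi}: the equivariance properties (i)--(iii) of $\eta(\theta)_J$ are correct (note that $W_J$-invariance of $\theta$ and triviality of $\theta$ on ${\bf T}\cap{\bf G}_J$ both follow from $J\subset I(\theta)$ because every ${\bf T}_\alpha$ with $\alpha\in\Phi_J$ is $W_J$-conjugate to some ${\bf T}_{\alpha_i}$, $i\in J$); the identity $\Phi^-_{w_Jw^{-1}}=\Phi^-_{w^{-1}}\sqcup w(\Phi_J^+)$ for $w\in X_J$ is correct and, both subsets being closed, gives ${\bf U}_{w_Jw^{-1}}={\bf U}_{w^{-1}}\cdot\dot w{\bf U}_J\dot w^{-1}$; and your independence argument is fine --- in fact it is slightly cleaner than you describe, since a summand $u\dot w\dot v{\bf 1}_\theta$ can only hit the standard basis vectors with Weyl part $wv$, so the coefficient of $u\,\dot{(ww_J)}{\bf 1}_\theta$ in any vanishing combination is exactly $\pm c\,\lambda_{w,u}$ with no interference; no ``maximal length'' selection is really needed. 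You also correctly observe that only the \emph{spanning} half of the Levi input is needed, since independence is recovered downstream.

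The one point I would push back on is your fallback justification for the Levi input. The statement $\Bbbk{\bf G}_J\eta=\Bbbk{\bf U}_J\eta$ is \emph{not} ``obtainable by induction on $|J|$'' via your own reduction: for the pair $({\bf G}_J,J)$ one has $X_J=\{1\}$ and ${\bf P}_J={\bf G}_J$, so the parabolic decomposition reduces nothing and you are back to the same full-parabolic (Steinberg) instance. Note also that ${\bf B}_J={\bf T}_J{\bf U}_J$ does \emph{not} act on $\eta$ by scalars (${\bf U}_J$ moves $\eta$), so the Bruhat decomposition of ${\bf G}_J$ alone does not give the spanning; one genuinely needs the $SL_2$-computation $\dot s_\alpha\varepsilon_\alpha(x)\eta=(\varepsilon_\alpha(-1/x)-1)\eta$ (which appears later in this paper in the proof of Lemma~5.5) to show that $\Bbbk{\bf U}_J\eta$ is stable under the $\dot s_\alpha$, $\alpha\in J$, exactly as in \cite{Xi} for the Steinberg module. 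Citing that known result is legitimate and closes the argument; the purported induction does not.
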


For $J\subset I(\theta)$, we define
$$E(\theta)_J=\mathbb{M}(\theta)_J/\mathbb{M}(\theta)_J',$$
where $\mathbb{M}(\theta)_J'$ is the sum of all $\mathbb{M}(\theta)_K$ with $J\subsetneq K\subset I(\theta)$. We denote by $C(\theta)_J$ the image of $\eta(\theta)_J$ in $E(\theta)_J$.
Set
$$
\aligned
Z_J(\theta)&\ =\{w\in X_J \mid \mathscr{R}(ww_J)\subset J\cup (I\backslash I(\theta))\}.
\endaligned
$$

\begin{proposition} \cite[Proposition 2.7]{CD3} \label{DesEJ}
For $J\subset I(\theta)$, we have
\begin{align} \label{EJ}
E(\theta)_J=\sum_{w\in Z_J(\theta)}\Bbbk {\bf U}_{w_Jw^{-1}}\dot{w}C(\theta)_J,
\end{align}
and  the set $\{u\dot{w}C(\theta)_J \mid w\in Z_J(\theta), u\in {\bf U}_{w_Jw^{-1}} \}$ is a basis of $E(\theta)_J$.
\end{proposition}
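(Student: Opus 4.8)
The plan is to prove the statement by reducing it to a single vector-space identity and then running an induction on the corank $|I(\theta)\setminus J|$, with base case $J=I(\theta)$. Since $E(\theta)_J$ is by definition the quotient $\mathbb{M}(\theta)_J/\mathbb{M}(\theta)_J'$, Proposition \ref{MJ=KUW} already shows that the images $u\dot w C(\theta)_J$ with $w\in X_J$ and $u\in{\bf U}_{w_Jw^{-1}}$ span $E(\theta)_J$. Calling an index $w\in X_J$ \emph{good} if $w\in Z_J(\theta)$ and \emph{bad} otherwise, it therefore suffices to prove the clean identity $\mathbb{M}(\theta)_J=\mathbb{M}(\theta)_J'\oplus V$, where $V$ is the span of the good basis vectors $u\dot w\eta(\theta)_J$ with $w\in Z_J(\theta)$, $u\in{\bf U}_{w_Jw^{-1}}$; both assertions then follow. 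The base case $J=I(\theta)$ is immediate: here $\mathbb{M}(\theta)_J'=0$, the condition defining $Z_J(\theta)$ is automatically satisfied so $Z_J(\theta)=X_J$, and the claim is Proposition \ref{MJ=KUW} itself.

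The combinatorial heart of the induction is a bijection matching the bad indices of $\mathbb{M}(\theta)_J$ with the (inductively known) bases of the $E(\theta)_K$ for $J\subsetneq K\subseteq I(\theta)$. Given a bad $w\in X_J$, set $K=\mathscr{R}(ww_J)\cap I(\theta)$; note that $w\in X_J$ forces $\mathscr{R}(ww_J)\supseteq J$, while badness means $K\supsetneq J$. Writing $w^\ast=ww_Jw_K$, I would check the standard Coxeter bookkeeping: $\mathscr{R}(ww_J)\supseteq K$ gives $w^\ast\in X_K$ with $\ell(ww_J)=\ell(w^\ast)+\ell(w_K)$; the identity $\mathscr{R}(w^\ast w_K)=\mathscr{R}(ww_J)$ gives $w^\ast\in Z_K(\theta)$; and $w_K(w^\ast)^{-1}=w_Jw^{-1}$ makes the unipotent part ${\bf U}_{w_Jw^{-1}}$ match exactly. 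Thus $(w,u)\mapsto(w^\ast,u)$ is a bijection from the bad indices onto $\bigsqcup_{J\subsetneq K\subseteq I(\theta)}\{(w^\ast,u)\mid w^\ast\in Z_K(\theta),\,u\in{\bf U}_{w_K(w^\ast)^{-1}}\}$, the disjoint union of the basis index sets of the modules $E(\theta)_K$.

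Next I would exhibit the triangularity. Using the coset decomposition $\eta(\theta)_K=\sum_{y\in X_J\cap W_K}(-1)^{\ell(y)}\dot y\,\eta(\theta)_J$ (valid because $\ell(xy)=\ell(x)+\ell(y)$ for $x\in X_K$, $y\in X_J\cap W_K$), the generator $u\dot{w^\ast}\eta(\theta)_K$ expands in the basis of Proposition \ref{MJ=KUW} with leading term a nonzero scalar multiple of the bad vector $u\dot w\eta(\theta)_J$ (the summand $y=w_Kw_J$, for which $w^\ast w_Kw_J=w$) and all remaining terms indexed by strictly shorter elements of $X_J$. By the inductive hypothesis applied to each $K\supsetneq J$, the module $\mathbb{M}(\theta)_J'=\sum_{K\supsetneq J}\mathbb{M}(\theta)_K$ is spanned precisely by these generators $u\dot{w^\ast}\eta(\theta)_K$ ranging over the right-hand index set above. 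Ordering everything by the length of $w$, the transition from this spanning set of $\mathbb{M}(\theta)_J'$ to the bad basis vectors is then unitriangular up to signs; its bad–bad block is invertible, so the generators form a basis of $\mathbb{M}(\theta)_J'$ and, together with the good vectors, give a triangular change of basis of all of $\mathbb{M}(\theta)_J$. This yields $\mathbb{M}(\theta)_J=\mathbb{M}(\theta)_J'\oplus V$, and passing to the quotient produces both \eqref{EJ} and the asserted basis of $E(\theta)_J$.

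The main obstacle I expect is the bookkeeping in the triangularity step rather than the combinatorics. After expanding $u\dot{w^\ast}\eta(\theta)_K$, each summand $u\dot{w^\ast}\dot y\,\eta(\theta)_J$ must be rewritten in the canonical basis of Proposition \ref{MJ=KUW}, which forces one to move the unipotent factor past $\dot{w^\ast}\dot y$ using the commutator relations (e) together with property (a). One must verify that this rewriting never raises the length of the $X_J$-index and never disturbs the single leading bad term, so that the triangular structure is genuinely preserved, and separately that the inductive spanning of $\mathbb{M}(\theta)_J'$ correctly absorbs the $\mathbb{M}(\theta)_K'$ parts into the sum over larger subsets. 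Controlling these lower-order unipotent contributions is the delicate part; the length filtration on $X_J$ is what keeps the whole argument under control.
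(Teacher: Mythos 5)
The paper does not actually prove this proposition --- it is imported verbatim from \cite[Proposition 2.7]{CD3} --- but your reconstruction is correct and follows the same route as the source: the bijection $w\mapsto w^{\ast}=ww_Jw_K$ with $K=\mathscr{R}(ww_J)\cap I(\theta)$ between the ``bad'' indices $X_J\setminus Z_J(\theta)$ and the basis indices of the modules $\mathbb{M}(\theta)_K$ for $J\subsetneq K\subseteq I(\theta)$, followed by a length-triangular change of basis, and all of your Coxeter-theoretic verifications ($\mathscr{R}(ww_J)\supseteq J$, $w^{\ast}\in Z_K(\theta)$, $w_K(w^{\ast})^{-1}=w_Jw^{-1}$, the coset expansion of $\eta(\theta)_K$, and the leading term at $y=w_Kw_J$) check out. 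The only comment worth adding is that the bookkeeping you flag as the delicate point is already absorbed by Proposition \ref{MJ=KUW}: the identity $\Bbbk{\bf U}\dot v\,\eta(\theta)_J=\Bbbk{\bf U}_{w_Jv^{-1}}\dot v\,\eta(\theta)_J$ shows that rewriting each summand $u\dot{w^{\ast}}\dot y\,\eta(\theta)_J$ in canonical form can never move it out of the single block indexed by $v=w^{\ast}y\in X_J$, so the $X_J$-index is preserved exactly (not merely not raised) and the triangularity with respect to $\ell(w)$ is automatic.
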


The $\Bbbk {\bf G}$-modules $E(\theta)_J$ are  irreducible and thus we get all the composition factors of $\mathbb{M}(\theta)$ (see  \cite[Theorem 3.1]{CD3}). According to \cite[Proposition 2.8]{CD3}, one has that  $E(\theta_1)_{K_1}$ is isomorphic to $E(\theta_2)_{K_2}$ as $\Bbbk {\bf G}$-modules if and only if $\theta_1=\theta_2$ and $K_1=K_2$. An important and special case is that $\theta$ is trivial and $J=I$, and then we have  the Steinberg module $\op{St}$ whose irreducibility was proved in \cite[Theorem 3.2]{Xi}  and \cite[Theorem 2.2]{Yang}.

The irreducible $\Bbbk {\bf G}$-modules $E(\theta)_J$ can also be realized by parabolic induction.   For $J\subset I(\theta)$, set $J'=I(\theta)\backslash J$ and  let $$\nabla(\theta)_J= \mathbb{M}(\theta, J')=\Bbbk{\bf G}\otimes_{\Bbbk{\bf P}_{J'}}\Bbbk_\theta.$$  Let $E(\theta)_J'$ be the submodule of $\nabla(\theta)_J$ generated by $$D(\theta)_J:=\sum_{w\in W_J}(-1)^{\ell(w)}\dot{w}{\bf 1}_{\theta, J'}.$$
We see that $E(\theta)_J'$ is isomorphic to $E(\theta)_J$ as $\Bbbk {\bf G}$-modules by \cite[Proposition 1.9]{CD3}. Therefore $E(\theta)_J$ can be regarded as the socle of  $\nabla(\theta)_J$. Moreover it is easy to  have the following properties of $\nabla(\theta)_J$.

\begin{proposition} \label{nablaProp}
For any $J,K,L \subset I(\theta)$, we have

\noindent $(1)$ $\nabla(\theta)_J$ projects onto $\nabla(\theta)_K $ iff $J\supset K$.

\noindent $(2)$
$[\nabla(\theta)_J: E(\theta)_K]=\left\{
\begin{array}{ll}
 1 \ \  \text{if} \ J\supset K, \\
0\ \  otherwise.
\end{array}\right.
$

\noindent $(3)$ $\nabla(\theta)_J$ has simple socle $E(\theta)_J$ and simple head $E(\theta)_{\emptyset}$.

\noindent $(4)$  If $J\supset  K \supset L$, then $E(\theta)_K$ appears before $E(\theta)_L$ in any composition series of $\nabla(\theta)_J$.
\end{proposition}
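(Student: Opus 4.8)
The plan is to deduce all four statements from the explicit basis descriptions already available and from the identification of $E(\theta)_J$ with the submodule $E(\theta)_J'$ of $\nabla(\theta)_J$ generated by $D(\theta)_J$. Throughout, recall that $\nabla(\theta)_J = \Bbbk{\bf G}\otimes_{\Bbbk{\bf P}_{J'}}\Bbbk_\theta$ with $J'=I(\theta)\backslash J$, so that $J\supset K$ translates into $J'\subset K'$, i.e. the parabolic $\bf{P}_{J'}$ shrinks as $J$ grows.

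First I would establish $(1)$. For the ``if'' direction, when $J\supset K$ we have $\bf{P}_{J'}\subset\bf{P}_{K'}$, and $\Bbbk_\theta$ is the restriction of the $\bf{P}_{K'}$-module $\Bbbk_\theta$ to $\bf{P}_{J'}$ (here one uses $K\subset I(\theta)$ so that $\theta$ genuinely extends to $\bf{P}_{K'}$). The canonical surjection of induced modules $\Bbbk{\bf G}\otimes_{\Bbbk\bf{P}_{J'}}\Bbbk_\theta \twoheadrightarrow \Bbbk{\bf G}\otimes_{\Bbbk\bf{P}_{K'}}\Bbbk_\theta$ sending ${\bf 1}_{\theta,J'}\mapsto {\bf 1}_{\theta,K'}$ then gives the desired projection. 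For the ``only if'' direction I would argue via heads: by $(3)$ both modules have simple head $E(\theta)_\emptyset$, and a nonzero projection must carry head to head; comparing the socle filtration, or simply invoking $(2)$, forces $J\supset K$. The cleanest route is to prove $(2)$ first and then read off $(1)$ from it.

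For $(2)$, the key input is Proposition~\ref{DesEJ}, which shows each $E(\theta)_K$ is irreducible with a known basis, and the fact that $\nabla(\theta)_J$ has a filtration whose subquotients are among the $E(\theta)_K$. I would compute the composition multiplicity by a dimension/basis-counting comparison: using the Bruhat-type basis $\{u\dot{w}{\bf 1}_{\theta,J'}\}$ of $\nabla(\theta)_J$ indexed by minimal-length coset representatives in $X_{J'}$ together with $u\in{\bf U}_{w^{-1}}$, and matching it against the bases of the $E(\theta)_K$ from Proposition~\ref{DesEJ}, one sees each $E(\theta)_K$ with $K\subset J$ occurs, and with multiplicity exactly one. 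A slicker alternative is to use the realization $E(\theta)_K\cong E(\theta)_K'$ as the socle of $\nabla(\theta)_K$ and exploit the transitivity of parabolic induction: $\nabla(\theta)_J$ is obtained from $\nabla(\theta)_K$ (for $K\subset J$) by a further induction step, and the Mackey/Bruhat decomposition produces each constituent exactly once. The vanishing for $K\not\subset J$ follows because the generator $D(\theta)_K$ requires alternating sums over $W_K$ that cannot be realized inside $\nabla(\theta)_J$ unless $W_K\subset W_J$, i.e. $K\subset J$.

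Statement $(3)$ is then nearly formal. The simple socle $E(\theta)_J$ is exactly the statement that $E(\theta)_J'$, generated by $D(\theta)_J$, is the unique minimal submodule; this is the content of \cite[Proposition 1.9]{CD3} identifying $E(\theta)_J'\cong E(\theta)_J$, combined with $(2)$ which shows $E(\theta)_J$ occurs with multiplicity one and hence can appear only in the socle. The simple head $E(\theta)_\emptyset$ follows because the trivial-type quotient $\nabla(\theta)_J\twoheadrightarrow\nabla(\theta)_\emptyset$ from $(1)$ lands in a module whose head is $E(\theta)_\emptyset$, and by $(2)$ the multiplicity of $E(\theta)_\emptyset$ in $\nabla(\theta)_J$ is one, pinning it to the top. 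Finally, $(4)$ is an ordering statement about the composition series: given $J\supset K\supset L$, I would show $E(\theta)_K$ must precede $E(\theta)_L$ by using the projection $\nabla(\theta)_J\twoheadrightarrow\nabla(\theta)_K$ from $(1)$. Since $E(\theta)_L$ with $L\subset K$ survives in the quotient $\nabla(\theta)_K$ while the kernel of the projection contains those constituents $E(\theta)_M$ with $M\not\subset K$, the constituent $E(\theta)_L$ sits in the quotient and $E(\theta)_K$, being the socle of $\nabla(\theta)_K$, lies deeper; pulling this back gives the asserted order. The main obstacle I anticipate is $(2)$: getting the multiplicity to be \emph{exactly} one (rather than just nonzero) for every $K\subset J$ requires a careful basis bookkeeping or a clean inductive use of parabolic induction, and this is where the real work lies; once multiplicities are nailed down, parts $(1)$, $(3)$, $(4)$ are short consequences.
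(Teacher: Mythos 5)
The paper gives no proof of this proposition to compare against: it is asserted as an ``easy'' consequence of \cite{CD3}, with the socle identification delegated to \cite[Proposition 1.9]{CD3}. Judged on its own, your architecture --- prove $(2)$ first, then read off $(1)$, $(3)$, $(4)$ --- is sound, and your arguments for $(1)$(if), for the socle half of $(3)$, and for $(4)$ via the projection $\nabla(\theta)_J\twoheadrightarrow\nabla(\theta)_K$ combined with the simple socle of the target and multiplicity one (so that neither $E(\theta)_K$ nor $E(\theta)_L$ lies in the kernel) all work and are in the spirit of what the paper implicitly relies on.

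Two steps as written would not survive scrutiny. First, your vanishing argument for $K\not\subset J$ --- ``the generator $D(\theta)_K$ cannot be realized inside $\nabla(\theta)_J$'' --- conflates a composition factor with a submodule: $E(\theta)_K$ could a priori occur in a proper subquotient without any vector of the shape $D(\theta)_K$ existing in $\nabla(\theta)_J$ itself. The clean repair is to observe that $\nabla(\theta)_J$ is a quotient of $\mathbb{M}(\theta)=\nabla(\theta)_{I(\theta)}$ (since ${\bf B}\subset{\bf P}_{J'}$), so $[\nabla(\theta)_J:E(\theta)_K]\le[\mathbb{M}(\theta):E(\theta)_K]=1$ by \cite{CD3}, and then to pin down exactly which factors survive by comparing the basis of Proposition \ref{MJ=KUW} (indexed by $X_{J'}$) with the bases of the $E(\theta)_K$ in Proposition \ref{DesEJ}; in particular the ``highest'' weight $\{\theta,\Phi^+_{w_K}\}$ of $E(\theta)_K$ occurs in $\nabla(\theta)_J$ only when $K\subset J$. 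Second, in $(3)$ the simple-head claim needs more than ``multiplicity one pins $E(\theta)_\emptyset$ to the top'': that shows $E(\theta)_\emptyset$ occurs once and is a quotient, but does not exclude some $E(\theta)_K$, $K\ne\emptyset$, from being another direct summand of the head. Instead use that $\nabla(\theta)_J$ is cyclic on the ${\bf U}$-fixed $\theta$-eigenvector ${\bf 1}_{\theta,J'}$, so every simple quotient contains a nonzero ${\bf U}$-fixed $\theta$-eigenvector, and by the basis of Proposition \ref{DesEJ} the module $E(\theta)_K$ has such a vector only for $K=\emptyset$ (the summand $\Bbbk{\bf U}_{w_Kw^{-1}}\dot wC(\theta)_K$ has nonzero ${\bf U}$-invariants only when ${\bf U}_{w_Kw^{-1}}$ is trivial, forcing $w=w_K\in X_K$, hence $K=\emptyset$). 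With these two repairs the plan goes through.
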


\section{Lie-Kolchin Theorem}

 It is well known that all finite-dimensional rational
irreducible representations arise by inducing one-dimensional representations of
the Borel subgroup.  The key point is the Lie-Kolchin theorem,  which implies that every rational  irreducible representation of the Borel subgroup ${\bf B}$ is one-dimensional. This property is still true when we consider the finite-dimensional abstract representations of ${\bf B}$. It can be regarded as a version of the  Lie-Kolchin Theorem in the abstract representations of infinite Chevalley groups.

According to \cite[Lemma 1.4]{Xi}, each irreducible representation of ${\bf T} \subseteq {\bf B}$ is one-dimensional.  Noting that   ${\bf B} = {\bf U} \rtimes {\bf T}$, our version of Lie-Kolchin Theorem is as follows:

\begin{theorem} \label{LieKolchin}
Let ${\bf B}$ be a Borel subgroup of ${\bf G}$, then any  finite-dimensional irreducible  representation of ${\bf B}$ is one-dimensional, which affords a character on  $\bf T$.
\end{theorem}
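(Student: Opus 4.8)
The plan is to reduce the statement to Xi's one-dimensionality result for $\bf T$ by first forcing the unipotent radical to act trivially. Let $V$ be a finite-dimensional irreducible representation of $\bf B$. Since ${\bf B}={\bf U}\rtimes{\bf T}$ with ${\bf U}$ normal, it suffices to prove that $\bf U$ acts trivially on $V$: once this is known, $V$ becomes an irreducible module for ${\bf B}/{\bf U}\cong{\bf T}$, hence one-dimensional and afforded by a character of $\bf T$ by \cite[Lemma 1.4]{Xi}. By property (d) of Section 2 the group $\bf U$ is generated by the positive root subgroups ${\bf U}_\alpha$, so it is enough to show that each ${\bf U}_\alpha$ acts trivially.

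Fix $\alpha\in\Phi^+$ and recall that $\varepsilon_\alpha\colon(\bar{\mathbb{F}}_q,+)\to{\bf U}_\alpha$ is an isomorphism. Every element of $\bar{\mathbb{F}}_q$ has additive order dividing $p=\op{char}\bar{\mathbb{F}}_q$, so each operator arising from ${\bf U}_\alpha$ satisfies $x^p=1$; as $\op{char}\Bbbk=0$ the polynomial $x^p-1$ is separable, and every such operator is diagonalizable. Because ${\bf U}_\alpha$ is abelian, these commuting operators are simultaneously diagonalizable, so $V|_{{\bf U}_\alpha}=\bigoplus_{\chi}V_\chi$ decomposes into finitely many character eigenspaces indexed by homomorphisms $\chi\colon(\bar{\mathbb{F}}_q,+)\to\Bbbk^\times$.

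The decisive step is a finiteness-versus-divisibility clash produced by the torus. The relation $t\varepsilon_\alpha(c)t^{-1}=\varepsilon_\alpha(\alpha(t)c)$ shows that $t\in{\bf T}$ carries $V_\chi$ into $V_{\chi^t}$, where $\chi^t(c)=\chi(\alpha(t)^{-1}c)$; hence the finite set of characters $\chi$ occurring in $V$ is ${\bf T}$-stable, and every occurring $\chi$ has finite ${\bf T}$-orbit. Suppose some occurring $\chi$ were nontrivial. Its image lies in the group $\mu_p\subset\Bbbk^\times$ of $p$-th roots of unity, and its kernel is an $\mathbb{F}_p$-hyperplane $H\subsetneq\bar{\mathbb{F}}_q$. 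Writing $\lambda=\alpha(t)$, one checks that $\chi^t=\chi$ holds iff $(\lambda^{-1}-1)\bar{\mathbb{F}}_q\subseteq H$, which for $\lambda\neq 1$ is impossible since the left-hand side is all of $\bar{\mathbb{F}}_q$. Thus the stabilizer of $\chi$ in $\bf T$ is contained in $\ker\alpha$; but $\alpha$ is a nonzero character of the torus, so $\alpha({\bf T})=\bar{\mathbb{F}}_q^\times$ is infinite, $\ker\alpha$ has infinite index, and the orbit of $\chi$ is infinite, contradicting finiteness. Therefore only the trivial character occurs, ${\bf U}_\alpha$ acts trivially, and hence so does $\bf U$.

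The step I expect to be the crux is the orbit-infiniteness in the third paragraph: everything hinges on the cross-characteristic hypothesis $\op{char}\Bbbk\neq p$, which forces the existence of nontrivial characters of the $p$-torsion group $(\bar{\mathbb{F}}_q,+)$, while the scaling action of the infinite group $\alpha({\bf T})=\bar{\mathbb{F}}_q^\times$ spreads any such character over an infinite orbit that cannot embed in a finite-dimensional module. The remaining verifications—diagonalizability in characteristic zero and the surjectivity of a nonzero root onto $\bar{\mathbb{F}}_q^\times$—are routine and I would only state them.
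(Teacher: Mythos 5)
Your proof is correct, but it takes a genuinely different route from the paper. The paper fixes a one-dimensional ${\bf T}$-submodule $\lambda$ of $M|_{\bf T}$, applies Frobenius reciprocity to realize $M$ as a quotient of $\operatorname{Ind}^{\bf B}_{\bf T}\lambda$, and then shows by an explicit averaging argument over the finite subgroups $U_{q^a}$ (telescoping over enclosed subsets $X\subset\Phi^+$ one root at a time) that this induced module has a unique maximal submodule, whose simple quotient is $\lambda$ with ${\bf U}$ acting trivially. You instead prove directly that ${\bf U}$ kills any finite-dimensional ${\bf B}$-module: each ${\bf U}_\alpha\cong(\bar{\mathbb{F}}_q,+)$ is $p$-torsion, hence acts by commuting diagonalizable operators when $\operatorname{char}\Bbbk\neq p$, and the finitely many additive characters that occur must form ${\bf T}$-stable finite orbits, while any nontrivial additive character has stabilizer $\ker\alpha$ of infinite index; the orbit--finiteness clash forces all occurring characters to be trivial, after which \cite[Lemma 1.4]{Xi} finishes the argument. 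Your version is shorter, more self-contained, and yields the stronger conclusion that ${\bf U}$ acts trivially on \emph{every} finite-dimensional ${\bf B}$-module (which immediately gives Proposition 3.3 once semisimplicity over ${\bf T}$ is known). What the paper's longer route buys is Lemma 3.2 --- the statement that a ${\bf B}$-eigenvector $\eta$ is recovered from $\underline{U_{q^a}}\eta$ inside the module it generates --- which is extracted from that averaging computation and is then used repeatedly in Section 5 (Lemmas 5.4 and 5.5); so your argument replaces the proof of the theorem but not the auxiliary lemma the paper mines from it.
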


\begin{proof}
  Let $M$ be a finite-dimensional irreducible  representation of ${\bf B}$.  Since $M$ is  finite-dimensional, we let $\lambda$ be a one-dimensional submodule of $M|_{\bf T}$. Using Frobenius reciprocity, we get
  $$\text{Hom}_{\bf T}(\lambda,  M|_{\bf T})\cong  \text{Hom}_{\bf B}(\text{Ind}^{\bf B}_{\bf T}\lambda,  M).$$
  In the following we will show  that  $\lambda$ (which is trivial on ${\bf U}$) is the unique simple quotient of $\text{Ind}^{\bf B}_{\bf T}\lambda$ as $\Bbbk {\bf B}$-modules. It is enough to show that
  $$L=\{\sum_{u\in{\bf U}} f_u u {\bf 1}_{\lambda} \mid \sum_{u\in{\bf U}} f_u =0, f_u\in \Bbbk \}$$
is the unique maximal submodule of $\text{Ind}^{\bf B}_{\bf T}\lambda$. Now let $\xi=\displaystyle \sum_{u\in{\bf U}} f_u u {\bf 1}_{\lambda}  \notin L$. Since the sum is finite, there exists an integer $a\in \mathbb{N}$ such that $u\in U_{q^a}$ whenever $f_u\ne 0$. Hence we have
$$\underline{U_{q^a}} \xi= (\sum_{u\in U_{q^a}} f_u)  \underline{U_{q^a}}  {\bf 1}_{\lambda} \in  \Bbbk {\bf B} \xi,$$
which implies that $\underline{U_{q^a}}  {\bf 1}_{\lambda} \in  \Bbbk {\bf B} \xi$.

For $X\subset \Phi^+$ such that ${\bf U}_X$ is a subgroup of ${\bf U}$, we claim that ${\bf 1}_{\lambda} \in  \Bbbk {\bf B} \xi$ if $\underline{U_{X, q^a}}  {\bf 1}_{\lambda} \in  \Bbbk {\bf B} \xi$ for some $a\in \mathbb{N}$. We prove this claim by do induction on $|X|$.  This claim is obvious if $X$ is the empty set. Now let $X=\{\gamma_1, \gamma_2, \dots, \gamma_m\}$ such that $\text{ht}(\gamma_1) \leq  \text{ht}(\gamma_2) \leq \dots \leq \text{ht}(\gamma_m)$. Denote by $Y=\{\gamma_2, \cdots, \gamma_m\}$ and it is easy to see that ${\bf U}_{Y}$ is also a subgroup of ${\bf U}$.
Let $b \in  \mathbb{N}$ such that  $a\mid b$, and  we get
$$ \underline{U_{Y, q^b}}\  \underline{U_{X, q^a}}  {\bf 1}_{\lambda} =q^{a|Y|}  \underline{U_{\gamma_1, q^a}}  \ \underline{U_{Y, q^b}} {\bf 1}_{\lambda}  \in  \Bbbk {\bf B} \xi. $$
Let $\mathfrak{D}_{q^b}$ be a subset of  $ \displaystyle \bigcap_{i=1}^m\gamma_i^{-1}(\mathbb{F}^*_{q^b})$ such that $\gamma_1: \mathfrak{D}_{q^b} \rightarrow \mathbb{F}^*_{q^b}$ is a bijection. Then  for any $c\in \mathbb{F}^*_{q^a}$,  we have
$$\sum_{t\in \mathfrak{D}_{q^b}}\lambda(t)^{-1} t\varepsilon_{\gamma_1}(c)  {\bf 1}_{\lambda}  =\sum_{t\in \mathfrak{D}_{q^b}}t \varepsilon_{\gamma_1}(c) t^{-1} {\bf 1}_{\lambda}  = \underline{U^*_{\gamma_1, q^b}}  {\bf 1}_{\lambda}  $$
and hence get
$$\sum_{t\in \mathfrak{D}_{q^b}}\lambda(t)^{-1} t  \underline{U_{\gamma_1, q^a}} \  \underline{U_{Y, q^b}} {\bf 1}_{\lambda} =(q^a-1)\underline{U_{X, q^b}} {\bf 1}_{\lambda}+  (q^b-q^a)\underline{U_{Y, q^b}} {\bf 1}_{\lambda}. $$
However it is easy to see that $\underline{U_{X, q^b}} {\bf 1}_{\lambda}\in \Bbbk {\bf B} \xi$, and thus we get $\underline{U_{Y, q^b}} {\bf 1}_{\lambda} \in \Bbbk {\bf B} \xi $. Therefore the claim is proved by induction.  Finally we get   ${\bf 1}_{\lambda} \in  \Bbbk {\bf B} \xi$, which implies that  $L$ is the unique maximal submodule of $\text{Ind}^{\bf B}_{\bf T}\lambda$. The theorem is proved.

\end{proof}

According to the proof of this theorem, we also get the following lemma which is useful in later discussion.

\begin{lemma}  \label{keylemma}
Let $M$ be a $\Bbbk {\bf B}$-module and $\lambda$ be a character of ${\bf B}$ (affording a character on  $\bf T$). Assume  $\eta \in M$ such that
$b\eta=\lambda(b) \eta, \forall b\in B$.  Then for any fixed integer $a\in  \mathbb{N}$,  $\eta$ is contained in the submodule of $M$ generated by $\underline{U_{q^a} }\eta$.
\end{lemma}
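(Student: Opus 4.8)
The plan is to deduce the lemma by an almost verbatim repetition of the inductive argument in the proof of Theorem \ref{LieKolchin}, the key observation being that the only property of the generator ${\bf 1}_\lambda$ used there is that it is a $\lambda$-eigenvector for the ${\bf B}$-action, and this is exactly our hypothesis $b\eta=\lambda(b)\eta$ on $\eta$. Accordingly, set $N=\Bbbk{\bf B}\,\underline{U_{q^a}}\,\eta$, the submodule of $M$ generated by $\underline{U_{q^a}}\eta$; our goal is to show $\eta\in N$. The whole argument will be driven by the claim that for every enclosed subset $X\subset\Phi^+$, the membership $\underline{U_{X,q^c}}\,\eta\in N$ for some $c\in\mathbb{N}$ already forces $\eta\in N$. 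Granting this claim, we finish by applying it with $X=\Phi^+$, so that ${\bf U}_X={\bf U}$ and $U_{X,q^a}=U_{q^a}$: the hypothesis $\underline{U_{q^a}}\eta\in N$ holds trivially, hence $\eta\in N$, which is the assertion of the lemma.

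I would prove the claim by induction on $|X|$, exactly as in Theorem \ref{LieKolchin}. The base case $X=\emptyset$ is immediate since $\underline{U_{\emptyset,q^c}}\eta=\eta$. For the inductive step, order $X=\{\gamma_1,\dots,\gamma_m\}$ so that $\text{ht}(\gamma_1)\le\cdots\le\text{ht}(\gamma_m)$ and put $Y=\{\gamma_2,\dots,\gamma_m\}$. One must first check that $Y$ is again enclosed: if $\gamma_1=p\alpha+q\beta\in\Phi^+$ with $\alpha,\beta\in Y$ and $p,q>0$, then $\text{ht}(\gamma_1)=p\,\text{ht}(\alpha)+q\,\text{ht}(\beta)>\text{ht}(\alpha)$, contradicting the minimality of $\text{ht}(\gamma_1)$; hence every $p\alpha+q\beta\in\Phi^+$ (which lies in $X$ by enclosedness of $X$) in fact lies in $Y$, so ${\bf U}_Y$ is a subgroup. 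I then reproduce the two computations from the proof of the theorem with $\eta$ in place of ${\bf 1}_\lambda$ and $N$ in place of $\Bbbk{\bf B}\xi$: choosing $b$ a proper multiple of the given $c$, the identity $\underline{U_{Y,q^b}}\,\underline{U_{X,q^c}}\,\eta=q^{c|Y|}\,\underline{U_{\gamma_1,q^c}}\,\underline{U_{Y,q^b}}\,\eta$ shows the right-hand side lies in $N$, and then averaging over a cross-section $\mathfrak{D}_{q^b}\subset\bigcap_{i}\gamma_i^{-1}(\mathbb{F}^*_{q^b})$ with $\gamma_1\colon\mathfrak{D}_{q^b}\to\mathbb{F}^*_{q^b}$ bijective yields a relation of the form $(q^c-1)\underline{U_{X,q^b}}\,\eta+(q^b-q^c)\underline{U_{Y,q^b}}\,\eta\in N$. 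The eigenvector identity $\lambda(t)^{-1}t\varepsilon_{\gamma_1}(c)\eta=t\varepsilon_{\gamma_1}(c)t^{-1}\eta=\varepsilon_{\gamma_1}(\gamma_1(t)c)\eta$ is precisely what makes this averaging legitimate, and it is here that the hypothesis on $\eta$ is used in an essential way.

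To conclude the step I note that $\underline{U_{X,q^b}}\,\eta\in N$, since $U_{X,q^c}$ is a subgroup of $U_{X,q^b}\subset{\bf U}\subset{\bf B}$ and hence $\underline{U_{X,q^b}}\,\eta=\sum_r r\bigl(\underline{U_{X,q^c}}\,\eta\bigr)$ with the coset representatives $r$ lying in ${\bf B}$. Because $q^b-q^c\neq0$ in the characteristic-zero field $\Bbbk$, solving the displayed relation gives $\underline{U_{Y,q^b}}\,\eta\in N$, and since $|Y|=|X|-1$ the induction hypothesis applied to $Y$ (with the integer $b$) yields $\eta\in N$. I expect no conceptual obstacle here: the genuinely new content is the recognition that semiinvariance alone suffices, while the remaining work is the bookkeeping of the commutator/conjugation identities and the cross-section averaging, all of which are already carried out in the proof of Theorem \ref{LieKolchin}.
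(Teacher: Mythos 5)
Your proposal is correct and is exactly the argument the paper intends: the paper derives Lemma \ref{keylemma} by remarking that the proof of Theorem \ref{LieKolchin} only uses the $\lambda$-semiinvariance of ${\bf 1}_\lambda$, which is precisely the substitution $\eta$ for ${\bf 1}_\lambda$ and $N=\Bbbk{\bf B}\,\underline{U_{q^a}}\,\eta$ for $\Bbbk{\bf B}\xi$ that you carry out. The extra details you supply (that $Y$ is again enclosed, and that $\underline{U_{X,q^b}}\,\eta\in N$ via coset representatives) are correct fillings-in of steps the paper leaves implicit.
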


Using the setting and properties in \cite[Section 2]{FS}, when $\Bbbk$ is an algebraically closed field of characteristic $0$, $\Bbbk {\bf T}$ and $\Bbbk {\bf B}$ are both  locally Wedderburn algebras. Thus by \cite[Lemma 3]{FS}, we see that any  one-dimensional representation of ${\bf T}$ (resp.${\bf B}$) is an injective $\Bbbk {\bf T}$ (resp. $\Bbbk {\bf B}$)-module. In particular, we get the following proposition.

\begin{proposition} \label{SemisimpleTBmodule}
Any finite-dimensional representation of $\Bbbk {\bf T}$ (resp. $\Bbbk {\bf B}$) is semisimple, which is a direct sum of characters of ${\bf T}$ (resp. ${\bf B}$ ).
\end{proposition}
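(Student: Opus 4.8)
The plan is to treat both cases by the same argument; I write it out for $\Bbbk{\bf B}$, the case of $\Bbbk{\bf T}$ being identical once Theorem~\ref{LieKolchin} is replaced by the statement that every finite-dimensional irreducible representation of ${\bf T}$ is a character (see \cite[Lemma 1.4]{Xi}). Let $M$ be a finite-dimensional $\Bbbk{\bf B}$-module. The two facts I would combine are already available: by Theorem~\ref{LieKolchin} every simple $\Bbbk{\bf B}$-submodule of $M$ is one-dimensional and affords a character of ${\bf T}$, and by the discussion preceding this proposition (using \cite[Lemma 3]{FS}) every such character is an \emph{injective} $\Bbbk{\bf B}$-module.

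First I would form the socle $\op{soc}(M)$, the sum of all simple submodules of $M$. Since $\Dim M<\infty$, this socle is a finite direct sum of simple modules, and by Theorem~\ref{LieKolchin} each summand is a character of ${\bf B}$. A finite direct sum of injective modules is again injective (a finite direct sum is a finite direct product, and any product of injectives is injective), so $\op{soc}(M)$ is an injective $\Bbbk{\bf B}$-module.

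Next, injectivity of $\op{soc}(M)$ forces the inclusion $\op{soc}(M)\hookrightarrow M$ to split, yielding a decomposition $M=\op{soc}(M)\oplus N$ of $\Bbbk{\bf B}$-modules. I claim $N=0$: otherwise $N$ is a nonzero finite-dimensional module and hence contains a simple submodule, which is then a simple submodule of $M$ and so lies in $\op{soc}(M)$, contradicting $N\cap\op{soc}(M)=0$. Thus $M=\op{soc}(M)$ is a finite direct sum of characters, which is exactly the claimed semisimplicity.

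I do not anticipate a real obstacle, since the substantive input---the injectivity of one-dimensional representations---has already been established from the locally Wedderburn structure of $\Bbbk{\bf T}$ and $\Bbbk{\bf B}$ in \cite{FS}. The only points needing a line of justification are the preservation of injectivity under finite direct sums and the fact that a nonzero finite-dimensional module has a nonzero socle; both are routine. If one prefers to bypass the socle, the same conclusion follows by induction on $\Dim M$: choose a simple (hence injective) submodule $\lambda\hookrightarrow M$, split off $M=\lambda\oplus N$ by injectivity, and apply the inductive hypothesis to $N$.
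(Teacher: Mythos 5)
Your proposal is correct and follows essentially the same route as the paper: the paper derives the proposition in one line from the same two inputs (Theorem~\ref{LieKolchin} identifying the simple finite-dimensional modules as characters, and the Farkas--Snider injectivity of those characters over the locally Wedderburn algebras $\Bbbk{\bf T}$ and $\Bbbk{\bf B}$), and your socle-splitting argument is just the standard filling-in of that derivation.
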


However the infinite-dimensional representations of $\Bbbk {\bf T}$ and $\Bbbk {\bf B}$ are very complicated. We even do not know the decomposition of  $\text{Ind}^{\bf B}_{\bf T}\text{tr}$ except for ${\bf G}=SL_2(\bar{\mathbb{F}}_q)$.
According to  a result of Borel and Tits \cite[Theorem 10.3 and Corollary 10.4]{BT}, we know that except the trivial representation, all other irreducible representations of $\Bbbk {\bf G}$ are infinite-dimensional if ${\bf G}$ is a semisimple algebraic group over $\bar{\mathbb{F}}_q$ and $\Bbbk $ is infinite with $\op{char}\Bbbk\neq \op{char} \bar{\mathbb{F}}_q$.  Thus the category of all  the finite-dimensional representations of $\Bbbk {\bf G}$ is not good enough for research.  On the other hand, if we regard the irreducible $\Bbbk {\bf G}$-modules $E(\theta)_J$ as $\Bbbk {\bf T}$ (or $\Bbbk {\bf B}$)-modules, they are not semisimple in general.
So looking for a good category to study the abstract representations of ${\bf G}$ seems a difficult  and challenging problem.

\section{Category  $\mathscr{X}$ and its basic properties}

Let $M$ be a  $\Bbbk {\bf G}$-module. For  a character $\theta\in \widehat{\bf T}$,  let
$$ M_{\theta}= \{v\in M \mid  t v= \theta(t) v,\  \forall \ t\in {\bf T} \}.$$
A character $\theta \in \widehat{\bf T}$ is called a ${\bf T}$-eigenvalue of $M$ if $M_{\theta}\ne 0$ and then we denote the  ${\bf T}$-eigenvalues  of $M$ by $\text{Ev}(M)$.
We set $M^{\bf T}=\displaystyle  \bigoplus_{\theta\in \widehat{\bf T} } M_{\theta} $.
As $\Bbbk {\bf T}$-module,  $M^{\bf T}$ is semisimple  by Proposition
 \ref{SemisimpleTBmodule}. However, we should note that $M\ne M^{\bf T}$ in general.  For example,  one can look at the infinite-dimensional Steinberg module.

\begin{definition}
The category $\mathscr{X}({\bf G})$ is defined to be the full subcategory of $\Bbbk {\bf G}$-Mod whose objects are the modules
 satisfying the following  condition:

 \noindent ($\star$)  $M^{\bf T}$ is a finite-dimensional space and there exists a basis $\{\xi_1, \xi_2, \dots, \xi_m\}$ of  ${\bf T}$-eigenvectors such that $M=\displaystyle \bigoplus_{i=1}^m \Bbbk {\bf U} \xi_i$, and moreover for each $i=1,2,\dots, m$,  there exists an enclosed  subset $X_i \subset \Phi^+$ such that $\Bbbk {\bf U} \xi_i \cong \op{Ind}^{\bf U}_{{\bf U}_{X_i}}\op{tr}$ as $\Bbbk {\bf U}$-modules.

\end{definition}

We have studied the category  $\mathscr{X}({\bf G})$  for ${\bf G}=SL_2(\bar{\mathbb{F}}_q)$ in  \cite{D2}. For the convenience of later discussion, we summarize the main results here.

\begin{proposition} \label{SL2result}
For  ${\bf G}=SL_2(\bar{\mathbb{F}}_q)$, the category  $\mathscr{X}({\bf G})$ is an abelian category and a Krull–Schmidt category.  The set  of  indecomposable objects in $\mathscr{X}({\bf G})$  is
$$\op{Ind}({\bf G})= \{ \Bbbk_{\op{tr}}, \op{St}, \mathbb{M}(\theta)\mid \theta \in \widehat{{\bf T}}\}, $$
and  the set  of simple objects in  the category $\mathscr{X}(\bf G)$ is
$$\op{Irr}({\bf G})= \{ \Bbbk_{\op{tr}}, \op{St}, \mathbb{M}(\theta)\mid \theta \in \widehat{{\bf T}} \ \text{is nontrivial}\}.$$
\end{proposition}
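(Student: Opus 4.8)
The plan is to verify the three assertions of Proposition \ref{SL2result} in order, reducing everything to explicit computation in $SL_2(\bar{\mathbb{F}}_q)$, where $W=\{1,s\}$, $\Phi^+=\{\alpha\}$ is a single root, and ${\bf U}={\bf U}_\alpha\cong(\bar{\mathbb{F}}_q,+)$ is abelian. Since $\Phi^+$ has only one element, the only enclosed subsets are $\emptyset$ and $\{\alpha\}$, so condition $(\star)$ says each generator $\xi_i$ satisfies either $\Bbbk{\bf U}\xi_i\cong\op{Ind}^{\bf U}_{\{1\}}\op{tr}=\Bbbk{\bf U}$ (a free rank-one piece) or $\Bbbk{\bf U}\xi_i\cong\op{Ind}^{\bf U}_{{\bf U}}\op{tr}=\Bbbk_{\op{tr}}$ (a ${\bf U}$-trivial line). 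This dichotomy is the structural engine for the whole proof: every object of $\mathscr{X}({\bf G})$ decomposes as a finite direct sum of pieces that are, as $\Bbbk{\bf U}$-modules, either free of rank one or trivial of dimension one.

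First I would pin down the listed modules as objects of $\mathscr{X}({\bf G})$ and identify their internal structure. The trivial module $\Bbbk_{\op{tr}}$ is the ${\bf U}$-trivial line. For $\mathbb{M}(\theta)$, the basis $\{u\dot w{\bf 1}_\theta\}$ from Section 2 shows $\mathbb{M}(\theta)=\Bbbk{\bf U}{\bf 1}_\theta\oplus\Bbbk{\bf U}\dot s{\bf 1}_\theta$; here ${\bf 1}_\theta$ is ${\bf U}$-fixed (a trivial piece) while $\dot s{\bf 1}_\theta$ generates a free rank-one $\Bbbk{\bf U}$-module (since ${\bf U}_{s^{-1}}={\bf U}$), so $\mathbb{M}(\theta)^{\bf T}$ is the two-dimensional span of the two ${\bf T}$-eigenvectors and $(\star)$ holds. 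For $\op{St}=E(\op{tr})_{I}$, Proposition \ref{DesEJ} gives a single free $\Bbbk{\bf U}$-generator, so $\op{St}$ is free of rank one over $\Bbbk{\bf U}$ with $\op{St}^{\bf T}$ one-dimensional; thus $(\star)$ holds. I would then check indecomposability: $\Bbbk_{\op{tr}}$ and $\op{St}$ are one-generator modules hence indecomposable, and $\mathbb{M}(\theta)$ is indecomposable by \cite[Proposition 2.2]{CD3} as already recorded in Section 2.

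Next I would prove that the abelian and Krull--Schmidt properties hold, and that these are all the indecomposables. The key reduction is that any $M\in\mathscr{X}({\bf G})$ is, restricted to ${\bf U}$, a finite direct sum of free rank-one and trivial pieces; using the ${\bf T}$-action to track eigenvalues and the Bruhat/$SL_2$-relations to see how $\dot s$ moves the ${\bf U}$-fixed vectors, I would show that $M$ must be an extension assembled from the listed indecomposables, and that any such $M$ decomposes as a finite direct sum of copies of $\Bbbk_{\op{tr}}$, $\op{St}$, and $\mathbb{M}(\theta)$. For the abelian structure the main point is closure under kernels and cokernels inside $\Bbbk{\bf G}$-Mod: I would argue that a $\Bbbk{\bf G}$-morphism between objects of $\mathscr{X}$ respects the decomposition into ${\bf U}$-free and ${\bf U}$-trivial parts (because morphisms preserve ${\bf T}$-eigenvectors and the ${\bf U}$-module type), so kernels and images again satisfy $(\star)$; the finite-dimensionality of $M^{\bf T}$ is preserved because it only decreases under submodules and the relevant quotients stay within the finite list of building blocks. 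Krull--Schmidt then follows from finiteness of the decomposition together with the fact that each building block has local endomorphism ring (each is indecomposable with scalar endomorphisms by Schur-type arguments, using that the simple objects are pairwise non-isomorphic by \cite[Proposition 2.8]{CD3}).

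Finally, for the classification of simple objects I would determine which indecomposables are simple. The module $\mathbb{M}(\theta)$ with $\theta$ nontrivial has $I(\theta)=\emptyset$, so by the results of Section 2 it coincides with its unique composition factor $E(\theta)_\emptyset$ and is irreducible; when $\theta$ is trivial, $\mathbb{M}(\op{tr})=\Bbbk[{\bf G}/{\bf B}]$ has the two composition factors $\Bbbk_{\op{tr}}$ and $\op{St}$ and is therefore not simple, which is exactly why $\mathbb{M}(\op{tr})$ is excluded from $\op{Irr}({\bf G})$ while $\Bbbk_{\op{tr}}$ and $\op{St}$ are included. The irreducibility of $\op{St}$ is cited from \cite[Theorem 3.2]{Xi} and \cite[Theorem 2.2]{Yang}. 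The main obstacle I anticipate is not any single module but the closure-under-kernels-and-cokernels step for the abelian structure: I must rule out the possibility that a quotient of a listed object acquires a $\Bbbk{\bf U}$-piece that is neither free of rank one nor trivial, and confirm that $M^{\bf T}$ genuinely captures enough of $M$ to force the global direct-sum decomposition. This is where the $SL_2$-specific fact that $\Phi^+$ is a singleton does the real work, since it collapses the enclosed-subset condition to the clean free-or-trivial dichotomy and makes the bookkeeping finite; the general reductive case treated later in the paper is precisely where this simplification fails.
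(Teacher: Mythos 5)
The paper itself gives no proof of this proposition: it is explicitly a summary of the main results of \cite{D2}, so your attempt is really being measured against that companion paper rather than against an argument in this one. Your setup is correct and is the right starting point: for $SL_2(\bar{\mathbb{F}}_q)$ the only enclosed subsets of $\Phi^+=\{\alpha\}$ are $\emptyset$ and $\{\alpha\}$, so condition $(\star)$ collapses to the free-or-trivial dichotomy for the pieces $\Bbbk{\bf U}\xi_i$, and your verification that $\Bbbk_{\op{tr}}$, $\op{St}$ and $\mathbb{M}(\theta)$ lie in $\mathscr{X}({\bf G})$ and are indecomposable is fine.

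However, the two load-bearing steps are asserted rather than proved, and one supporting claim is false as stated. First, you say a $\Bbbk{\bf G}$-morphism ``respects the decomposition into ${\bf U}$-free and ${\bf U}$-trivial parts because morphisms preserve the ${\bf U}$-module type.'' The quotient map $\mathbb{M}(\op{tr})\twoheadrightarrow \Bbbk_{\op{tr}}$ kills $\op{St}=\Bbbk{\bf U}(1-\dot s){\bf 1}_{\op{tr}}$ and sends the free piece $\Bbbk{\bf U}\dot s{\bf 1}_{\op{tr}}$ onto a ${\bf U}$-trivial line, so the ${\bf U}$-type of a generator is not preserved and the decomposition $M=\bigoplus_i\Bbbk{\bf U}\xi_i$ is not canonical. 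The genuine difficulty in closure under subobjects is producing a nonzero ${\bf T}$-eigenvector inside an arbitrary nonzero $\Bbbk{\bf G}$-submodule $N$ and showing $N=\Bbbk{\bf U}N^{\bf T}$; in the general argument of Section 5 this is exactly the content of Lemmas \ref{5.2} and \ref{5.3}, which rest on the averaging elements $\underline{U_{q^a}}$ and Lemma \ref{keylemma}, and nothing playing that role appears in your sketch. Second, the classification of indecomposables is the entire content of the middle assertion, and ``I would show that $M$ must be an extension assembled from the listed indecomposables and that any such $M$ decomposes'' begs the question: you must actually compute the relevant extension groups, e.g.\ show that every extension $0\to\Bbbk_{\op{tr}}\to X\to\op{St}\to 0$ splits (the direction opposite to $\mathbb{M}(\op{tr})$), that simple objects have no self-extensions in $\mathscr{X}$, and that no indecomposable with three or more composition factors occurs. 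Without these computations both the Krull--Schmidt claim and the list $\op{Ind}({\bf G})$ are unsupported. The final paragraph on which indecomposables are simple is correct.
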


The $\Bbbk {\bf G}$-modules $\mathbb{M}(\theta)_J$, $\nabla(\theta)_J$  and $E(\theta)_J$ introduced in Section 2 are in the category $\mathscr{X}({\bf G})$.  For  $M\in  \mathscr{X}({\bf G})$,  we emphasize that not each basis of  $M^{\bf T}$ satisfies the condition ($\star$). For example, we consider the case  ${\bf G}=SL_2(\bar{\mathbb{F}}_q)$ and $M=\mathbb{M}(\op{tr})$. Then we have
$\mathbb{M}(\op{tr})=\Bbbk {\bf 1}_{\op{tr}} \oplus  \Bbbk {\bf U} \dot{s}  {\bf 1}_{\op{tr}}  $
and thus $\{{\bf 1}_{\op{tr}} ,   \dot{s}  {\bf 1}_{\op{tr}}  \}$ is a basis of $\mathbb{M}(\op{tr})^{\bf T}$  which  satisfies the condition ($\star$). However the basis  $\{(1-\dot{s}){\bf 1}_{\op{tr}} ,  (1+ \dot{s})  {\bf 1}_{\op{tr}}  \}$  has the property that
 $$(x-y)(1-\dot{s}){\bf 1}_{\op{tr}}=(y-x)  (1+ \dot{s})  {\bf 1}_{\op{tr}},  \quad \forall x,y\in {\bf U}. $$
 Therefore this basis does not satisfy condition ($\star$).

\begin{definition}
For  $M \in \mathscr{X}({\bf G})$,  a basis $\{\xi_1, \xi_2, \cdots, \xi_m\}$ of $M^{\bf T}$ satisfying the   condition  ($\star$) is called a good basis of $M^{\bf T}$.
\end{definition}

For $\theta\in \widehat{\bf T}$ and  $\gamma\in \Phi^+$, we let
$$ M_{(\theta, \gamma)}= \{v\in M^{\bf T} \mid  \varepsilon_{\gamma}(x) v=v, \forall x\in  \bar{\mathbb{F}}_q \}.$$
For $X \subset \Phi^+$,  we set  $M_{(\theta, X)}= \bigcap_{\gamma\in X} M_{(\theta, \gamma)}$ and then let
$$M_{\{\theta, X\}}= M_{(\theta, X)} \big/ \sum_{Y \varsupsetneq X} M_{(\theta, Y)}.$$
We denote  by $p_{\theta, X}: M_{(\theta, X)} \rightarrow M_{\{\theta, X\}} $ the projection.

\begin{definition}
Let $M\in \mathscr{X}({\bf G})$ and $\xi \in M^{\bf T}$.   For $\theta\in \widehat{\bf T}$ and $X\subset \Phi^+$,  a pairing $\{\theta, X\}$ is called the weight of $\xi$ if  $\xi\in M_{(\theta, X)}$ and $p_{\theta, X}(\xi)$ is nonzero.
\end{definition}

\begin{remark} \normalfont
For $\theta\in \widehat{{\bf T}}$,  not every vector  in  $M_{\theta}$ has a weight. For example, when ${\bf G}=SL_3(\bar{\mathbb{F}}_q)$, we consider the induced module  $\mathbb{M}(\op{tr})$. Denote  the simple reflections by $\{s_1, s_2\}$,  and then it is easy to see that $s_1 {\bf 1}_{\op{tr}} +s_2  {\bf 1}_{\op{tr}}$ does not have a weight.
\end{remark}

Let $\{\xi_1, \xi_2, \cdots, \xi_m\}$ be a good basis of $M^{\bf T}$.
Suppose $\xi_i\in M_{\theta_i}$ and  $\Bbbk {\bf U} \xi_i \cong \text{Ind}^{\bf U}_{{\bf U}_{X_i}} \text{tr}$ as $\Bbbk {\bf U}$-modules, then  $\xi_i$  has the weight   $\{\theta_i, X_i\}$. The weights of  the  good basis $\{\xi_1, \xi_2, \cdots, \xi_m\}$  is denoted by  $\text{Wt}(M)$.  It is easy to see that $ \text{Wt}(M)$    is independent of the  choice of  good bases of $M^{\bf T}$.  On the other hand, we can directly see that
 $$\text{Wt}(M)=\{\{\theta, X\} \mid \dim M_{\{\theta, X\}}  \ne 0  \}. $$

\begin{definition}
The character of $M\in \mathscr{X}({\bf G}) $  is defined to be
  $$\text{ch} M=  \sum \dim M_{\{\theta, X\}} \{\theta, X\}.$$
\end{definition}

\begin{example}  \label{Example} \normalfont
(a) The weights of $\mathbb{M}(\theta)_J$ are $\{\theta^w, \Phi^+_{w_Jw^{-1}}\}$ with $w\in X_J$ and
$$\text{ch} \mathbb{M}(\theta)_J=\sum_{w\in X_J}  \{\theta^w, \Phi^+_{w_Jw^{-1}}\}. $$

\noindent (b)  The weights of $E(\theta)_J$ are $\{\theta^w, \Phi^+_{w_Jw^{-1}}\}$ with $w\in Z_J(\theta)$ and
$$\text{ch} E(\theta)_J=\sum_{w\in Z_J(\theta)}  \{\theta^w, \Phi^+_{w_Jw^{-1}}\}. $$

\noindent (c)  The weights of $\nabla(\theta)_J$ are $\{\theta^w, \Phi^+_{w_Jw^{-1}}\}$ with $w\in X_{J'}$, where $J'= I \setminus J$.
The character of $\nabla(\theta)_J$ is
$$\text{ch} \nabla(\theta)_J =\sum_{w\in X_{J'}}  \{\theta^w, \Phi^+_{w_Jw^{-1}}\}.$$

\end{example}

\begin{proposition} \label{expressionofbasis}
For $M\in \mathscr{X}({\bf G}) $, let $\{\xi_1, \xi_2, \dots, \xi_m\}$  be a  good bases of  $M^{\bf T}$ and assume that the weight of $\xi_i$ is $\{\theta_i, X_i\}$.   Suppose $\xi \in M^{\bf T}$ is of weight $\{\theta, X\}$ and we write $\xi=\displaystyle  \sum_{i=1}^m f_{i}\xi_i$.
If $f_i\ne 0$, then $\xi_i\in M_{\theta}$ and $X_i \supseteq X$.  Moreover,  there exists at least one integer $i_0$ such that $f_{i_0} \ne 0$ and $X_{i_0}= X$.
\end{proposition}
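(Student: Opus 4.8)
The plan is to read off the three assertions from the two independence structures that condition $(\star)$ imposes on $M$: the $\bf T$-eigenspace decomposition $M^{\bf T}=\bigoplus_{\theta}M_{\theta}$ (in which $M_{\theta}=\operatorname{span}\{\xi_i\mid \theta_i=\theta\}$ because $\{\xi_1,\dots,\xi_m\}$ is a good basis), together with the $\Bbbk{\bf U}$-module direct sum $M=\bigoplus_{i=1}^{m}\Bbbk{\bf U}\xi_i$. I would treat the three claims in turn. The assertion that $f_i\neq 0$ forces $\xi_i\in M_{\theta}$ is purely formal: since $\xi$ has weight $\{\theta,X\}$ we have $\xi\in M_{(\theta,X)}\subseteq M_{\theta}$, and as the good basis is adapted to the eigenspace decomposition, uniqueness of the expansion $\xi=\sum_i f_i\xi_i$ kills every coefficient $f_i$ with $\theta_i\neq\theta$.

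The heart of the argument is the inclusion $X\subseteq X_i$ whenever $f_i\neq 0$. Here I would fix an arbitrary $\gamma\in X$ and use $\xi\in M_{(\theta,\gamma)}$, i.e.\ $\varepsilon_{\gamma}(x)\xi=\xi$ for all $x\in\bar{\mathbb{F}}_q$. Expanding and subtracting gives $\sum_i f_i\bigl(\varepsilon_{\gamma}(x)\xi_i-\xi_i\bigr)=0$. The decisive observation is that the $i$-th term $\varepsilon_{\gamma}(x)\xi_i-\xi_i$ lies in the summand $\Bbbk{\bf U}\xi_i$, so by the directness of $M=\bigoplus_i\Bbbk{\bf U}\xi_i$ the relation decouples into $f_i(\varepsilon_{\gamma}(x)\xi_i-\xi_i)=0$ for each $i$ separately. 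Whenever $f_i\neq 0$ this yields $\varepsilon_{\gamma}(x)\xi_i=\xi_i$ for all $x$, that is, ${\bf U}_{\gamma}$ fixes $\xi_i$; since $\Bbbk{\bf U}\xi_i\cong\operatorname{Ind}^{\bf U}_{{\bf U}_{X_i}}\operatorname{tr}$ has ${\bf U}_{X_i}$ as the exact ${\bf U}$-stabilizer of its generator $\xi_i$, and ${\bf U}_{\gamma}\subseteq{\bf U}_{X_i}$ holds iff $\gamma\in X_i$ by the root-subgroup factorization of ${\bf U}_{X_i}$ (property (d) of Section 2), we conclude $\gamma\in X_i$. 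As $\gamma\in X$ was arbitrary, $X\subseteq X_i$.

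Finally, to produce an index $i_0$ with $f_{i_0}\neq 0$ and $X_{i_0}=X$, I would argue by contradiction: if every $i$ with $f_i\neq 0$ had $X_i\supsetneq X$, then each such $\xi_i$ would lie in $M_{(\theta,X_i)}$ with $X_i\supsetneq X$, whence $\xi=\sum_i f_i\xi_i\in\sum_{Y\supsetneq X}M_{(\theta,Y)}$ and therefore $p_{\theta,X}(\xi)=0$, contradicting that $\{\theta,X\}$ is the weight of $\xi$. Combined with the previous paragraph, this gives both halves of the ``moreover'' clause.

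I expect the only delicate step to be the identification used in the middle paragraph, namely the translation of ``${\bf U}_{\gamma}$ fixes $\xi_i$'' into ``$\gamma\in X_i$''. This is exactly where the full force of $(\star)$ and of the fact that the weight of $\xi_i$ equals $\{\theta_i,X_i\}$ enters, via the isomorphism $\Bbbk{\bf U}\xi_i\cong\operatorname{Ind}^{\bf U}_{{\bf U}_{X_i}}\operatorname{tr}$ (so that the stabilizer of $\xi_i$ is precisely ${\bf U}_{X_i}$, not merely containing it) and the uniqueness of the root-subgroup decomposition. Everything else is bookkeeping with the two direct sum decompositions, so no serious computation should be required.
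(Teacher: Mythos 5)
Your proposal is correct and follows essentially the same route as the paper: both arguments use the directness of $M=\bigoplus_i\Bbbk{\bf U}\xi_i$ to decouple the relation $\sum_i f_i(\varepsilon_\gamma(z)-1)\xi_i=0$ and conclude $\gamma\in X_i$ from the identification $\Bbbk{\bf U}\xi_i\cong\operatorname{Ind}^{\bf U}_{{\bf U}_{X_i}}\operatorname{tr}$, and both obtain the existence of $i_0$ by noting that $X_i\supsetneq X$ for all contributing $i$ would force $p_{\theta,X}(\xi)=0$. The only cosmetic difference is that the paper phrases the middle step as a contradiction with a single offending index $j_0$, while you argue directly for every $i$ with $f_i\neq 0$ and make the stabilizer identification more explicit.
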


\begin{proof} The property  $\theta_i= \theta$ is obvious when $f_{i} \ne 0$. Suppose that there is an  integer  $j_0$ such that  $f_{j_0} \ne 0$ and $X_{j_0} \nsupseteqq X$, then there exists  a  positive root $\gamma \in X$ and $\gamma \notin X_{j_0}$.  Hence we get $\varepsilon_{\gamma}(z) \xi=\xi, \forall z\in \bar{\mathbb{F}}_q $.   Thus we have
$$ \displaystyle \sum_{i=1}^m f_{i} (\varepsilon_{\gamma}(z)- 1) \xi_i =0,  \quad  \forall z\in  \bar{\mathbb{F}}_q.$$
However noting that $ f_{j_0}(\varepsilon_{\gamma}(z)- 1) \xi_{j_0} \ne 0$ for $z\in \bar{\mathbb{F}}^*_q$,  we get  a contradiction since  $\{\xi_1, \xi_2, \dots, \xi_m\}$ is a good basis of  $M^{\bf T}$.

Recall  the definition of  $p_{\theta, X}: M_{(\theta, X)} \rightarrow M_{\{\theta, X\}} $.  Then we have  $p_{\theta, X}(\xi) \ne 0$. Now suppose $X_i \supsetneq X$ whenever   $f_{i} \ne 0$,  then $p_{\theta, X}(\xi_i) = 0$
and thus $p_{\theta, X}(\xi) = 0$,   which is a contradiction. So there exists at least an integer $i_0$ such that $f_{i_0} \ne 0$ and $X_{i_0}= X$. The proposition is proved.
\end{proof}

\begin{corollary} \label{twoGB}
For $M\in \mathscr{X}({\bf G}) $, let $B_1 $ and $B_2$  be two good bases of $M^{\bf T}$.
Set $B_{1,\theta}=B_1\cap M_{\theta}$ and  $B_{2,\theta}=B_2\cap M_{\theta}$ for each $\theta\in \widehat{\bf T}$.
Then the transition matrix between $B_{1,\theta}$ and $B_{2,\theta}$  is a blocked upper triangular matrix when the vectors of  $B_{i,\theta}$ ($i=1,2$) are in  an  appropriate order.
\end{corollary}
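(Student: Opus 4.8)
The plan is to read off the block triangularity directly from Proposition \ref{expressionofbasis}, which already governs how a weight vector expands in a good basis. First I would fix $\theta\in\widehat{\bf T}$ and reduce to the single eigenspace $M_\theta$. Since every good basis consists of ${\bf T}$-eigenvectors and $M^{\bf T}=\bigoplus_{\theta}M_\theta$, both $B_{1,\theta}$ and $B_{2,\theta}$ are bases of $M_\theta$, so the transition matrix between them is square. Moreover, by the first assertion of Proposition \ref{expressionofbasis}, when a vector $\xi\in B_{2,\theta}$ (of eigenvalue $\theta$) is written $\xi=\sum_i f_i\xi_i$ in the good basis $B_1$, every $\xi_i$ with $f_i\ne 0$ lies in $M_\theta$, hence belongs to $B_{1,\theta}$. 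Thus the expansion never involves vectors of eigenvalue other than $\theta$, and it suffices to analyze the transition matrix relating $B_{1,\theta}$ to $B_{2,\theta}$.

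Next I would organize each basis by the $X$-part of its weights. Writing the weight of each vector of $B_{1,\theta}$ (resp.\ $B_{2,\theta}$) as $\{\theta,X\}$ with $X\subseteq\Phi^+$ enclosed, I would group the vectors into blocks indexed by the subsets $X$ that occur. Because $\mathrm{Wt}(M)$ is independent of the choice of good basis, the same subsets $X$ occur, with the same multiplicities $\dim M_{\{\theta,X\}}$, in both $B_{1,\theta}$ and $B_{2,\theta}$; hence the blocks on the two sides match in size and the diagonal blocks of the transition matrix are square. I would then fix a total order on these subsets refining reverse inclusion, placing a block with larger $X$ before one with smaller $X$ (any linear extension of the inclusion order, read in reverse, will do), and order the vectors of both $B_{1,\theta}$ and $B_{2,\theta}$ accordingly.

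Finally I would invoke the remaining content of Proposition \ref{expressionofbasis}: for $\xi\in B_{2,\theta}$ of weight $\{\theta,X\}$, the expansion $\xi=\sum_i f_i\xi_i$ in $B_{1,\theta}$ has $f_i\ne 0$ only when the weight $\{\theta,X_i\}$ of $\xi_i$ satisfies $X_i\supseteq X$. In the chosen order this means that the nonzero entries of the column indexed by $\xi$ occur only in rows whose block is no later than the block of $\xi$, which is exactly the blocked upper triangular condition (the diagonal blocks corresponding to the equalities $X_i=X$). Running this over all $\theta$ gives the statement. I do not anticipate any real obstacle here, since the essential work is already done in Proposition \ref{expressionofbasis}; the only points needing care are bookkeeping—verifying that the expansion of a $\theta$-eigenvector stays inside $M_\theta$, that the block sizes agree on the two sides via the basis-independence of $\mathrm{Wt}(M)$, and that the convention ``larger $X$ first'' is what yields upper rather than lower triangularity.
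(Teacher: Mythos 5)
Your argument is correct and is exactly the intended deduction: the paper gives no separate proof of this corollary, treating it as an immediate consequence of Proposition \ref{expressionofbasis}, which is precisely how you proceed (restrict to a single eigenspace $M_\theta$, group basis vectors into blocks by the enclosed subset $X$ in their weight, order blocks by any linear extension of reverse inclusion, and use the implication $f_i\ne 0\Rightarrow X_i\supseteq X$ together with the existence of some $i_0$ with $X_{i_0}=X$ to get block upper triangularity with square diagonal blocks). Your bookkeeping points (eigenvalue preservation and basis-independence of $\mathrm{Wt}(M)$ giving matching block sizes) are the right ones and are all justified by the surrounding results.
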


\begin{proposition} \label{constrgoodbasis}
  Let $M\in \mathscr{X}({\bf G}) $. For each $\{\theta, X\} \in \op{Wt}(M)$, let $B_{\{\theta, X\}}$ be a subset of $M^{\bf T}$ such that $p_{\theta, X}( B_{\{\theta, X\}} )$ is a basis of $M_{\{\theta, X\}}$ and $| B_{\{\theta, X\}} |=\dim M_{\{\theta, X\}}$.
Then $\displaystyle  \bigcup_{\{\theta, X\} \in \op{Wt}(M) } B_{\{\theta, X\}}$ is a good basis of $M^{\bf T}$.
\end{proposition}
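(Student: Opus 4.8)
The plan is to compare the proposed family $\mathcal B=\bigcup_{\{\theta,X\}\in\op{Wt}(M)}B_{\{\theta,X\}}$ with a fixed good basis by realising both as generating sets of surjections onto $M$ from direct sums of induced ${\bf U}$-modules, and then to prove that the change-of-presentation map between the two is an isomorphism by a block-triangularity argument. Throughout, every $X$ occurring is enclosed (being one of the sets in a weight of $M$), so each ${\bf U}_X$ is a subgroup and $\op{Ind}^{\bf U}_{{\bf U}_X}\op{tr}$ is defined.

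First I would fix a good basis $\{\eta_1,\dots,\eta_m\}$ of $M^{\bf T}$, with $\eta_i\in M_{\theta_i}$ of weight $\{\theta_i,Y_i\}$, so that the map $\Psi\colon\bigoplus_i\op{Ind}^{\bf U}_{{\bf U}_{Y_i}}\op{tr}\xrightarrow{\ \sim\ }M$ sending the generator of the $i$-th summand to $\eta_i$ is an isomorphism. Using the argument of Proposition~\ref{expressionofbasis}, any $v\in M_{(\theta,X)}$, expanded as $v=\sum_i f_i\eta_i$, has $f_i\neq 0$ only when $\theta_i=\theta$ and $Y_i\supseteq X$; hence $M_{(\theta,X)}=\bigoplus_{\theta_i=\theta,\,Y_i\supseteq X}\Bbbk\eta_i$ and $\sum_{Y\supsetneq X}M_{(\theta,Y)}=\bigoplus_{\theta_i=\theta,\,Y_i\supsetneq X}\Bbbk\eta_i$. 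Consequently $\{p_{\theta,X}(\eta_i)\mid \theta_i=\theta,\ Y_i=X\}$ is a basis of $M_{\{\theta,X\}}$, so $\dim M_{\{\theta,X\}}$ equals the number of good-basis vectors of weight $\{\theta,X\}$ and $\sum_{\{\theta,X\}}\dim M_{\{\theta,X\}}=m$; in particular $\mathcal B$ has $m$ elements.

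Next I would introduce the candidate surjection $\Phi\colon\bigoplus_{\xi\in\mathcal B}\op{Ind}^{\bf U}_{{\bf U}_{X(\xi)}}\op{tr}\to M$ sending the generator $1_\xi$ to $\xi$, where for $\xi\in B_{\{\theta,X\}}$ I write $\theta(\xi)=\theta$ and $X(\xi)=X$; this is well defined since $\xi$ is fixed by ${\bf U}_{X(\xi)}$. I would then analyse $F=\Psi^{-1}\Phi$. Writing $\xi=\sum_i f_i(\xi)\eta_i$, Proposition~\ref{expressionofbasis} gives $f_i(\xi)\neq0\Rightarrow\theta_i=\theta(\xi),\ Y_i\supseteq X(\xi)$, so $F$ preserves the ${\bf T}$-eigenvalue and, after grouping the summands into blocks indexed by the enclosed sets $X$ ordered by a linear extension of reverse inclusion, becomes block upper triangular. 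Its diagonal block at $\{\theta,X\}$ is the matrix $\big(f_i(\xi)\big)_{Y_i=X,\ \xi\in B_{\{\theta,X\}}}$ tensored with $\op{id}_{\op{Ind}^{\bf U}_{{\bf U}_X}\op{tr}}$, and this matrix is invertible because $p_{\theta,X}(\xi)=\sum_{Y_i=X}f_i(\xi)\,p_{\theta,X}(\eta_i)$ expresses the basis $\{p_{\theta,X}(\xi)\}_{\xi\in B_{\{\theta,X\}}}$ of $M_{\{\theta,X\}}$ in terms of the basis $\{p_{\theta,X}(\eta_i)\}_{Y_i=X}$.

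The main obstacle is to deduce from this that $F$ is an isomorphism, since the blocks are infinite-dimensional and one cannot argue by counting dimensions. I would write $F=D+N$ with $D$ the block-diagonal part (an isomorphism, by the previous step) and $N$ strictly block upper triangular; then $D^{-1}N$ carries the $X$-block into the sum of the blocks indexed by $X'\supsetneq X$. As $\Phi^+$ is finite there are only finitely many enclosed sets, so all chains for reverse inclusion are bounded and $D^{-1}N$ is nilpotent; hence $\op{id}+D^{-1}N$ is invertible via a terminating Neumann series, and $F$, therefore $\Phi$, is an isomorphism. This yields $M=\bigoplus_{\xi\in\mathcal B}\Bbbk{\bf U}\xi$ with $\Bbbk{\bf U}\xi\cong\op{Ind}^{\bf U}_{{\bf U}_{X(\xi)}}\op{tr}$. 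Finally, the vectors of $\mathcal B$ lie in distinct summands of this decomposition and are nonzero, hence linearly independent; being an independent subset of $M^{\bf T}$ of cardinality $m=\dim M^{\bf T}$, they form a basis, so $\mathcal B$ satisfies $(\star)$ and is a good basis of $M^{\bf T}$.
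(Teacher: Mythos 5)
Your proof is correct and follows essentially the same route as the paper's: both compare the candidate family to a fixed good basis via a transition that is block-triangular with respect to inclusion of the sets $X$, with diagonal blocks invertible because $p_{\theta,X}(B_{\{\theta,X\}})$ is a basis of $M_{\{\theta,X\}}$. Your reformulation in terms of an isomorphism $\Phi$ of induced modules and a nilpotent strictly-triangular part is a somewhat more careful packaging of the paper's induction on weights and its direct linear-independence computation, but the underlying idea is the same.
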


\begin{proof}  Let $\{\xi_1, \xi_2, \dots, \xi_m\}$ be a good basis of $M_{\theta}$ and $\{\eta_1, \eta_2, \dots, \eta_m\}$ be a basis which is constructed in the proposition.  It is enough to show that $\displaystyle \sum_{i=1}^m  \Bbbk {\bf U} \eta_i$ is a direct sum and
$\displaystyle \sum_{i=1}^m  \Bbbk {\bf U} \eta_i= \bigoplus_{i=1}^m \Bbbk {\bf U} \xi_i.$
Without loss of generality, we can assume that these two sets are both bases of  $M_{\theta}$ for a fixed $\theta\in  \widehat{{\bf T}}$.
Suppose the weights of $M_{\theta}$ are
$$\{\theta,  X_1\}, \  \{\theta,  X_2\}, \  \cdots,\   \{\theta,  X_n\},$$
where $|X_1| \geq |X_2| \geq \cdots \geq  |X_n|$. Let $i_0, i_1, \dots, i_n$ be integers such that
$$0=i_0 < i_1 < i_2 < \dots < i_n=m.$$
We assume that  the weight of $\xi_j, \eta_j$ is $ \{\theta,  X_k\}$ when $i_{k-1}< j \leq i_{k}$.
For convenience,  set $Y_k= \Phi^+ \setminus X_k$ for $k=1,2, \dots, n$.
According to  the same discussion in the proof of Proposition \ref{expressionofbasis} and by induction on the integer $k$, it is not difficult to see  that
$\displaystyle \sum_{j\leq i_k}\Bbbk \xi_j = \sum_{j\leq i_k}\Bbbk \eta_j$
 for $k=1, 2, \dots, n$. Since  the weight of $\xi_j, \eta_j$ is the same, we get
 $\displaystyle \sum_{j\leq i_k}\Bbbk  {\bf U} \eta_j =\bigoplus_{j\leq i_k}\Bbbk {\bf U}\xi_j$.  In particular,
we have $\displaystyle \sum_{i=1}^m  \Bbbk {\bf U} \eta_i= \bigoplus_{i=1}^m \Bbbk {\bf U} \xi_i.$
 It is easy to see that the following set
 $$ \bigcup_{k=1}^n \bigcup_{ i_{k-1} <  j\leq i_k  } \{u\xi_j\mid  u\in {\bf U}_{Y_k}\}$$
 is a basis of $\displaystyle  \bigoplus_{i=1}^m \Bbbk {\bf U} \xi_i$.
 Suppose that  $\displaystyle \sum_{j=1}^m \sum_{u\in {\bf U}} f_{j,u} u\eta_j=0$, where $u\in {\bf U}_{Y_k}$ for $ i_{k-1} <  j\leq i_k$.
 Since  $\displaystyle \sum_{j=1}^m \Bbbk \xi_j = \sum_{j=1}^m \Bbbk \eta_j$, we let $\eta_j=\displaystyle  \sum_{l=1}^n a_{jl}\xi_l$.
 Then we get
$$\displaystyle \sum_{l=1}^m \sum_{u\in {\bf U}} ( \sum_{j=1}^m f_{j,u}a_{jl} )u\xi_l=0,$$
 which implies that $\displaystyle \sum_{j=1}^m f_{j,u}a_{jl} =0$ for any $l$ and $u$.
Noting that the matrix $A=(a_{ij})_{n\times n}$ is invertible, we get $f_{j,u}=0$ and thus  $\displaystyle \sum_{i=1}^m  \Bbbk {\bf U} \eta_i$ is a direct sum.  The proposition is proved.

\end{proof}

In the following we let $M \in  \mathscr{X}({\bf G}) $ and  fix a good basis $\{\xi_1, \xi_2, \cdots, \xi_m\}$ of $M^{\bf T}$.   Assume that  the weight of $\xi_i$ is $\{\theta_i, X_i\}$.  We claim that for each $k=1,2, \cdots, m$ and a simple root $\alpha\in \Delta$, we have
\begin{align} \label{suinu1}
\dot{s}_{\alpha} \varepsilon_{\alpha}(1) \xi_k \in \bigoplus_{i=1}^m \Bbbk {\bf U}_{\alpha} \xi_i.
\end{align}
Indeed, we can write
 \begin{align} \label{suinu2}
 \dot{s}_{\alpha} \varepsilon_{\alpha}(1) \xi_k= \sum_{i=1}^m\sum_{u\in {\bf U}} f_{i,u} u \xi_i, \quad f_{i,u}\in \Bbbk,
 \end{align}
and thus there exists an integer $a$ such that  $u\in U_{q^a}$  whenever $f_{i,u} \ne 0$.
Now suppose that there exists an element $u_0\in {\bf U}$ such that $f_{i,u_0} \ne 0$ and  $u_0\notin  {\bf U}_{\alpha}$.
In this case, we can chose $t\in \dot{s}_{\alpha} (\text{Ker}\alpha )\dot{s}^{-1}_{\alpha}$ and $t u_0t^{-1} \notin  U_{q^a}$. Noting that $t$ acts trivially on the left side of  (\ref{suinu2}) but not trivially on the right side of  (\ref{suinu2}),   we get a contradiction.  Thus we have proved the following proposition.

\begin{proposition} \label{SL2module}
Let $M\in  \mathscr{X}({\bf G})$.  Given  a simple root $\alpha\in \Delta$, let ${\bf G}_{\alpha}$ be the subgroup generated by ${\bf U}_{\alpha},  {\bf U}_{-\alpha}$. Then $M^{\alpha} : = \Bbbk {\bf U}_{\alpha} M^{\bf T} $ is a $\Bbbk {\bf G}_{\alpha}$-module.
\end{proposition}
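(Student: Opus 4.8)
The plan is to verify that $M^\alpha$ is stable under ${\bf G}_\alpha$ by checking stability on a convenient set of generators. Since ${\bf G}_\alpha=\langle {\bf U}_\alpha,{\bf U}_{-\alpha}\rangle$, it is enough to show $ {\bf U}_\alpha M^\alpha\subseteq M^\alpha$ and ${\bf U}_{-\alpha}M^\alpha\subseteq M^\alpha$. The first inclusion is immediate from the definition $M^\alpha=\Bbbk{\bf U}_\alpha M^{\bf T}$. For the second I would first record that $M^\alpha$ is ${\bf T}$-stable: for $t\in{\bf T}$ we have $t\,{\bf U}_\alpha M^{\bf T}=(t{\bf U}_\alpha t^{-1})(tM^{\bf T})={\bf U}_\alpha M^{\bf T}$, because ${\bf T}$ normalizes ${\bf U}_\alpha$ and preserves the eigenspace decomposition $M^{\bf T}=\bigoplus_\theta M_\theta$. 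Using property (a), ${\bf U}_{-\alpha}=\dot{s}_\alpha{\bf U}_\alpha\dot{s}_\alpha^{-1}$, so if I can show $\dot{s}_\alpha M^\alpha\subseteq M^\alpha$ then, since $\dot{s}_\alpha^{2}\in{\bf T}$ gives $\dot{s}_\alpha^{-1}M^\alpha=\dot{s}_\alpha(\dot{s}_\alpha^{-2}M^\alpha)=\dot{s}_\alpha M^\alpha\subseteq M^\alpha$ as well, I obtain ${\bf U}_{-\alpha}M^\alpha=\dot{s}_\alpha{\bf U}_\alpha\dot{s}_\alpha^{-1}M^\alpha\subseteq\dot{s}_\alpha M^\alpha\subseteq M^\alpha$.

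Thus the whole proposition reduces to the single inclusion $\dot{s}_\alpha M^\alpha\subseteq M^\alpha$. Writing $M^\alpha=\sum_k\Bbbk{\bf U}_\alpha\xi_k$ for the fixed good basis $\{\xi_1,\dots,\xi_m\}$ of $M^{\bf T}$, it suffices to prove $\dot{s}_\alpha\varepsilon_\alpha(b)\xi_k\in M^\alpha$ for every $b\in\bar{\mathbb{F}}_q$ and every $k$. For $b=0$ there is nothing to do: $\dot{s}_\alpha$ normalizes ${\bf T}$, so $\dot{s}_\alpha\xi_k$ is again a ${\bf T}$-eigenvector (of weight $s_\alpha\cdot\theta_k$) and hence lies in $M^{\bf T}\subseteq M^\alpha$. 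For $b\neq0$ I would upgrade the relation (\ref{suinu1}), which is recorded only for $\varepsilon_\alpha(1)$, to the value $b$ by a torus conjugation. Since $\alpha\colon{\bf T}\to\bar{\mathbb{F}}_q^{\times}$ is surjective, choose $t\in{\bf T}$ with $\alpha(t)=b$, so that $\varepsilon_\alpha(b)=t\varepsilon_\alpha(1)t^{-1}$, and compute
\begin{align*}
\dot{s}_\alpha\varepsilon_\alpha(b)\xi_k=\dot{s}_\alpha\,t\,\varepsilon_\alpha(1)\,t^{-1}\xi_k=\theta_k(t)^{-1}\,(\dot{s}_\alpha t\dot{s}_\alpha^{-1})\,\dot{s}_\alpha\varepsilon_\alpha(1)\xi_k .
\end{align*}
By (\ref{suinu1}) the vector $\dot{s}_\alpha\varepsilon_\alpha(1)\xi_k$ lies in $M^\alpha$, and $\dot{s}_\alpha t\dot{s}_\alpha^{-1}\in{\bf T}$ keeps it there by the ${\bf T}$-stability noted above. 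This settles the case $b\neq0$, completes the inclusion $\dot{s}_\alpha M^\alpha\subseteq M^\alpha$, and hence proves that $M^\alpha$ is a $\Bbbk{\bf G}_\alpha$-module.

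The step I expect to be the main point is precisely the passage from (\ref{suinu1}) to arbitrary $\varepsilon_\alpha(b)$; the conjugation identity $\varepsilon_\alpha(b)=t\varepsilon_\alpha(1)t^{-1}$ handles it in one line, but it depends on first having the ${\bf T}$-stability of $M^\alpha$ in place and on the surjectivity of the root $\alpha$ onto $\bar{\mathbb{F}}_q^{\times}$. By contrast the boundary value $b=0$, which is the case one might worry about when trying to reach ${\bf U}_{-\alpha}$ directly through the $SL_2$-relation for $\varepsilon_{-\alpha}(c)$, is essentially free here once one observes that $\dot{s}_\alpha$ permutes ${\bf T}$-eigenvectors and that $M^{\bf T}\subseteq M^\alpha$.
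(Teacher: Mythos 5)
Your deductions are all correct, but your write-up and the paper's proof divide the labor in opposite ways, so you should be aware of where the real content sits. The paper's proof of Proposition \ref{SL2module} consists entirely of establishing the claim (\ref{suinu1}) that $\dot{s}_{\alpha}\varepsilon_{\alpha}(1)\xi_k\in\bigoplus_{i}\Bbbk{\bf U}_{\alpha}\xi_i$: one expands $\dot{s}_{\alpha}\varepsilon_{\alpha}(1)\xi_k$ in the basis $\{u\xi_i\}$ attached to a good basis, and if some $u_0\notin{\bf U}_{\alpha}$ contributed, conjugation by a suitable $t\in\dot{s}_{\alpha}(\operatorname{Ker}\alpha)\dot{s}_{\alpha}^{-1}$ would fix the left-hand side up to the scalar $\theta_k(\dot{s}_{\alpha}^{-1}t\dot{s}_{\alpha})$ while pushing $tu_0t^{-1}$ outside $U_{q^a}$ on the right, a contradiction; the passage from this to ${\bf G}_{\alpha}$-stability is then left implicit. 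You do exactly the complementary half: you take (\ref{suinu1}) as recorded and supply the omitted reduction --- the ${\bf T}$-stability of $M^{\alpha}$, the reduction of ${\bf U}_{-\alpha}$-stability to $\dot{s}_{\alpha}$-stability via ${\bf U}_{-\alpha}=\dot{s}_{\alpha}{\bf U}_{\alpha}\dot{s}_{\alpha}^{-1}$ and $\dot{s}_{\alpha}^{2}\in{\bf T}$, the observation that $\dot{s}_{\alpha}\xi_k$ is again a ${\bf T}$-eigenvector so the case $b=0$ is free, and the conjugation $\varepsilon_{\alpha}(b)=t\varepsilon_{\alpha}(1)t^{-1}$ with $\alpha(t)=b$ for $b\neq 0$. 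Each of these steps checks out. The one caveat is that (\ref{suinu1}) is not background material: it \emph{is} the paper's proof of the proposition, so a fully self-contained argument would still have to carry out the exclusion of components outside ${\bf U}_{\alpha}$ sketched above. Taken together with the paper's derivation of (\ref{suinu1}), your text makes the proof of the proposition complete.
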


It is well known that there exists a natural morphism from $SL_2(\bar{\mathbb{F}}_q)$ to $ {\bf G}_{\alpha}$.  Thus
$M^{\alpha}$ can be regarded as a  $SL_2(\bar{\mathbb{F}}_q)$-module.
 For $M\in \mathscr{X}({\bf G})$ and a simple root $\alpha\in \Delta$, we define a projection
$$\Gamma_{\alpha}:  M \rightarrow  M^{\alpha},  \quad  \sum f_{i,x} x\xi_{i} \mapsto \sum f_{i,x} \omega_{\alpha}(x) \xi_{i},$$
where  $\omega_{\alpha}(x)$ is  the  ${\bf U}_{\alpha}$-component of $x$.  This map is independent of the choice of the good basis of $M^{\bf T}$. Moreover it is not difficult to see that $\Gamma_{\alpha}$ is a morphism of $\Bbbk {\bf G}_{\alpha}$-modules by Proposition \ref{SL2module}.

Looking at  Example \ref{Example}, we see that the weights of $\mathbb{M}(\theta)_J$, $E(\theta)_J$ and $\nabla(\theta)_J$ have the form of $\{\lambda, \Phi^+_w\}$ for some $\lambda\in \widehat{\bf T}$ and $w\in W$.  This property still holds for general $M\in \mathscr{X}({\bf G})$.

\begin{proposition} \label{wset}
Let $M\in \mathscr{X}({\bf G})$ and $\{\theta, X\}$ be  a weight in $\op{Wt}(M)$. Then there exists an element $w\in W$ such that $X= \Phi^+_w$.
\end{proposition}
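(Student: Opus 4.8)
The plan is to turn the statement into a combinatorial fact about the root system and then prove it by a rank-one reduction driven by the reflection operators $\dot{s}_\alpha$. First I would reformulate the goal. Fix a good basis vector $\xi$ realizing the weight $\{\theta,X\}$ (one exists by Proposition \ref{constrgoodbasis}), so that $\op{Stab}_{\bf U}(\xi)={\bf U}_X$; in particular, for a positive root $\gamma$, the group ${\bf U}_\gamma$ fixes $\xi$ if and only if $\gamma\in X$, by the uniqueness of expression in property (d). Since $X$ is enclosed it is closed under the root additions coming from the commutator relations. Now recall that $\Phi^+_w=\Phi^+\setminus\Phi^-_w$ and that $w\mapsto\Phi^-_w$ is a bijection from $W$ onto the biconvex subsets of $\Phi^+$ (those subsets which, together with their complement, are closed); this is standard (\cite{Car}). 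Hence $X=\Phi^+_w$ for some $w$ if and only if $\Phi^+\setminus X$ is closed. As $X$ is already closed, the entire content is to show that $\Phi^+\setminus X$ is closed as well, i.e.\ that $X$ is coclosed.

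The main tool is the behaviour of $X$ under a single simple reflection. For a simple root $\alpha$, Proposition \ref{SL2module} makes $M^\alpha=\Bbbk{\bf U}_\alpha M^{\bf T}$ into an $SL_2(\bar{\mathbb F}_q)$-module lying in $\mathscr{X}(SL_2(\bar{\mathbb F}_q))$, so by Proposition \ref{SL2result} it decomposes into copies of $\Bbbk_{\op{tr}}$, $\op{St}$ and $\mathbb{M}(\lambda)$. The vector $\dot{s}_\alpha\xi$ is again a ${\bf T}$-eigenvector, of eigenvalue $s_\alpha\theta$, and two fixedness facts follow at once from $\dot{s}_\alpha^{-1}{\bf U}_\gamma\dot{s}_\alpha={\bf U}_{s_\alpha\gamma}$: for $\delta\in\Phi^+\setminus\{\alpha\}$ the group ${\bf U}_\delta$ fixes $\dot{s}_\alpha\xi$ iff $\delta\in s_\alpha X$, and ${\bf U}_\alpha$ fixes $\dot{s}_\alpha\xi$ iff $\xi$ is ${\bf U}_{-\alpha}$-fixed. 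Thus the set of positive roots whose root group fixes $\dot{s}_\alpha\xi$ is $s_\alpha X$, or $s_\alpha X\cup\{\alpha\}$ exactly when $\xi$ is ${\bf U}_{-\alpha}$-fixed; and it is precisely the rank-one classification of Proposition \ref{SL2result} that decides this alternative, since the $\mathbb{M}(\lambda)$- and trivial-type components of $\xi$ in the $\alpha$-direction are ${\bf U}_{-\alpha}$-fixed while the Steinberg-type components are not.

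With this I would induct on $|\Phi^+\setminus X|$. If $X=\Phi^+$ then $X=\Phi^+_e$ and we are done; otherwise, since $X$ is closed and proper, some simple root $\alpha$ lies outside $X$ (a closed set containing all simple roots is all of $\Phi^+$). In the favourable case where $\xi$ is ${\bf U}_{-\alpha}$-fixed, the fixing set for $\dot{s}_\alpha\xi$ is $s_\alpha X\cup\{\alpha\}$, with strictly smaller complement. One then checks that this set is a genuine weight of $M$, i.e.\ that $p_{s_\alpha\theta,\,s_\alpha X\cup\{\alpha\}}(\dot{s}_\alpha\xi)\neq0$: if it vanished, $\dot{s}_\alpha\xi$ would lie in $\sum_{Y\supsetneq s_\alpha X\cup\{\alpha\}}M_{(s_\alpha\theta,Y)}$, and applying $\dot{s}_\alpha$ once more (using $\dot{s}_\alpha^2\in{\bf T}$ and that $\alpha\in Y$ forces $s_\alpha(Y\setminus\{\alpha\})\supsetneq X$) would place $\xi$ in $\sum_{Z\supsetneq X}M_{(\theta,Z)}$, contradicting $p_{\theta,X}(\xi)\neq0$. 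By induction $s_\alpha X\cup\{\alpha\}=\Phi^+_v$ for some $v$, and since $\alpha\in\Phi^+_v$ one verifies directly that $X=\Phi^+_{vs_\alpha}$, completing the step.

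The hard part is the remaining \emph{Steinberg-type} case, where $\xi$ fails to be ${\bf U}_{-\alpha}$-fixed for \emph{every} simple $\alpha\notin X$; then $\dot{s}_\alpha$ merely replaces $X$ by the equinumerous set $s_\alpha X$ and the naive induction stalls. This is the genuine phenomenon already visible for $\op{St}$ over $SL_2(\bar{\mathbb F}_q)$, whose unique weight $\{\op{tr},\emptyset\}$ has $\emptyset=\Phi^+_{s_\alpha}$ already biconvex. I would resolve it by arguing that such directions cannot break coclosedness: if $\beta,\gamma\notin X$ while some $m\beta+n\gamma\in X$, I would transport this configuration by a sequence of simple reflections into the rank-two Levi subsystem the two roots span and derive a contradiction with the induced form $\Bbbk{\bf U}\xi\cong\op{Ind}^{\bf U}_{{\bf U}_X}\op{tr}$ through Proposition \ref{SL2result}. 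Establishing coclosedness in these Steinberg-type directions, together with the nonvanishing-of-projection bookkeeping above, is where I expect the real work to lie.
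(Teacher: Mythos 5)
Your skeleton coincides with the paper's: the paper also inducts on $|Y|$ where $Y=\Phi^+\setminus X$, and whenever some simple root $\alpha\in Y$ carries a component of type $\mathbb{M}(\theta^{s_\alpha}|_{{\bf T}_\alpha})$ or $\mathbb{M}(\op{tr})$ it passes to $\dot{s}_\alpha\xi$, whose weight has complement $s_\alpha(Y\setminus\{\alpha\})$ of strictly smaller size --- exactly your favourable case. Your biconvexity reformulation is a slightly cleaner way to state the target, but it is not where the difficulty sits, and the difficulty is precisely the part you have not done.

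The genuine gap is the all-Steinberg case, which you only describe as a plan and yourself flag as ``where the real work lies''; that case is the actual content of the proposition. The paper's argument there is concrete: from $\Bbbk{\bf G}_\alpha\eta_\alpha\cong\op{St}$ for every $\alpha\in\Delta\cap Y$ it deduces $s_\alpha(X)=X$, hence $Y\supseteq\Phi^+_{\Delta\cap Y}$; it then supposes $Y\supsetneq\Phi^+_{\Delta\cap Y}$, takes $\delta\in Y\setminus\Phi_{\Delta\cap Y}$ of minimal height, shows $\delta=\gamma+\beta$ with $\gamma\in X$ and $\beta\in\Delta\cap Y$, and reaches a contradiction by evaluating $\dot{s}_\beta\varepsilon_{\gamma}(x)\dot{s}_\beta\varepsilon_{\beta}(y)\eta_\beta$ in two different ways, using that $s_\beta(\gamma)+\beta$ is not a root outside type $G_2$, together with a separate explicit analysis of the two $G_2$ configurations. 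Your proposed substitute --- transporting a failure of coclosedness into a rank-two Levi by simple reflections --- is not carried out and is not obviously available: the roots $\beta,\gamma\in Y$ witnessing the failure need not be simple; the reflections you could apply without changing $|Y|$ are exactly those along $\Delta\cap Y$, which stabilize $X$ and so do not simplify the configuration; and reflections along roots of $X$ leave the inductive framework entirely. You also never address $G_2$, where the key vanishing $s_\beta(\gamma)+\beta\notin\Phi$ fails and a separate computation is unavoidable. As written, the proposal establishes the statement only for weights all of whose simple directions outside $X$ are of non-Steinberg type, which misses precisely the weights (e.g.\ the highest weights of $\op{St}$ and of the $E(\theta)_J$ with $J\neq\emptyset$) for which the claim has real content.
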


\begin{proof}  Let $Y= \Phi^+ \setminus  X$  and set $ \Delta_Y=  \Delta \cap Y$.  We show that $Y=  \Phi^-_w$  by do induction on $|Y|$.
If $|Y|=0$, then the corresponding element $w\in W$ is the identity. If $|Y|=1$,  since $X$ is a self-enclosed  subset, it is not difficult to see that $Y$ consists of a simple root. In this case, we have $Y=\Phi^-_s $, where  $s\in S$ is a simple reflection. Now we suppose that for each $Y$ with $|Y|< m$, there exists an element  $w\in W$ such that $Y=  \Phi^-_w$.

Assume that  $|Y|=m$. Firstly we consider the case that  there is a simple root $\alpha\in \Delta_Y$ such that $\theta|_{{\bf T}_{\alpha}}$ is nontrivial.
According to  Proposition  \ref{SL2module},  $M^{\alpha} $ is a $\Bbbk {\bf G}_{\alpha}$-module.
Thus  the induced $\Bbbk {\bf G}_{\alpha}$-module $\mathbb{M}(\theta^{s_{\alpha}}|_{{\bf T}_{\alpha}}) $ is a direct summand of  $M^{\alpha} $ using Proposition \ref{SL2result}.  Therefore  there exists  $\xi\in M_{\theta}$ with weight $\{\theta, X\}$ such that $ \Bbbk {\bf G}_{\alpha} \xi \cong \mathbb{M}(\theta^{s_{\alpha}}|_{{\bf T}_{\alpha}}) $ as  $\Bbbk {\bf G}_{\alpha}$-modules. Hence the weight of $\dot{s}_{\alpha} \xi$ is
$\{\theta^{s_{\alpha}}, s_{\alpha}(Y \setminus \{\alpha\})\}$. By the inductive assumptions,  there exists  $v\in W$ such that $ s_{\alpha}(Y \setminus \{\alpha\})=\Phi^-_v $, which implies that $Y=\Phi^-_{vs_{\alpha}} $.

Now we consider the another case  that $\theta|_{{\bf T}_{\alpha}}$ is trivial for all $\alpha \in \Delta_Y$. So  for each $\alpha \in \Delta_Y$, there exists an element $\eta_{\alpha}$ of weight $\{\theta, X\}$
such that $\Bbbk {\bf G}_{\alpha} \eta_{\alpha} \cong\mathbb{M}(\op{tr}) $ or $\op{St}$  as  $\Bbbk {\bf G}_{\alpha}$-module.  If there is $\alpha \in \Delta_Y$ such that   $\Bbbk {\bf G}_{\alpha} \eta_{\alpha} \cong\mathbb{M}(\op{tr}) $ as  $\Bbbk {\bf G}_{\alpha}$-module, then we can handle it by the same  discussion as before.
In the following we assume that $\Bbbk {\bf G}_{\alpha} \eta_{\alpha} \cong \op{St}$ as  $\Bbbk {\bf G}_{\alpha}$-module for each  $\alpha \in \Delta_Y$. Thus we have  $s_{\alpha}(Y- \{\alpha\} )=Y- \{\alpha\}$  for each $\alpha\in \Delta_Y$, which also implies that $s_{\alpha}(X)=X$.   In particular, we get $Y \supseteq \Phi_{  \Delta_Y}$. We claim that $Y= \Phi_{  \Delta_Y}$. Suppose that  $Y \supsetneq \Phi_{  \Delta_Y}$ and let $\delta \in Y- \Phi_{  \Delta_Y}$ such that $\text{ht}(\delta)$ is minimal among the roots in $Y- \Phi_{  \Delta_Y}$.
Hence there exists a root $\gamma\in X$ and a simple root $\beta\in \Delta_Y $ such that $\delta=\gamma+ \beta$.
Indeed, suppose that $\delta$ has the form $\delta_1+\delta' +\delta_2 $ with $\delta_1, \delta_2\in \Delta\cap X$ and $\delta'\in \Phi^+$.
Since $\text{ht}(\delta)$ is minimal, we see that $\delta$ is a sum of two roots in $X$ and thus $\delta\in X$, which is a contradiction.
Noting that $\gamma\in X$, we have $s_{\beta}(\gamma) \in X$.
Since $\text{ht}(\delta)$ is minimal and $s_{\beta}(\delta) \geq \delta$,  we get  $s_{\beta}(\gamma) \geq \gamma + 2\beta$.
We consider the element $\zeta: = \dot{s}_{\beta} \varepsilon_{\gamma}(x) \dot{s}_{\beta} \varepsilon_{\beta}(y) \eta_{\beta} $, where $x, y\ne 0$. If ${\bf G}$ is not of type $G_2$, then $s_{\beta}(\gamma) +\beta$ is not a root anymore. Since $ \dot{s}_{\beta} \varepsilon_{\gamma}(x) \dot{s}_{\beta}^{-1} \in {\bf U}_{ s_{\beta}(\gamma) }$ and  $s_{\beta}(\gamma) \in X$,  we have
$$\zeta= \dot{s}_{\beta} \varepsilon_{\gamma}(x) \dot{s}_{\beta}^{-1} \varepsilon_{\beta}(y) \eta_{\beta} = \varepsilon_{\beta}(y) \eta_{\beta}.$$
However noting that $\dot{s}_{\beta}^{-1} \varepsilon_{\beta}(y) \eta_{\beta} \in \Bbbk {\bf U}_{\beta} \eta_{\beta}$, it is not difficult to see that certain element  in ${\bf U}_{\delta}$ will appear in the expression of  $\zeta$  by direct computation. The results we compute by different ways are not the same. Thus we get a contradiction.

When ${\bf G}$ is of type $G_2$, denote by $\alpha_1$ the short  simple root and by $\alpha_2$ the long simple root. Then we have
$$s_{\alpha_1}(\alpha_2)= 3\alpha_1+\alpha_2, \quad  s_{\alpha_2}(\alpha_1)= \alpha_1+\alpha_2. $$
We just need to consider $\Delta_Y=\{\alpha_1\}$ or $\{\alpha_2\}$.  When $ \Delta_Y=\{\alpha_2\}$,  we have $\alpha_1\in X$ and thus $ s_{\alpha_2}(\alpha_1)\in X$. Since $X$ is a self-enclosed subset of $\Phi^+$, it is easy to see that
$X= \Phi^+ -\{\alpha_2\} $.  When $ \Delta_Y=\{\alpha_1\}$,  we have
$$X \supseteq  \{\alpha_2,  3\alpha_1+\alpha_2,    3\alpha_1+2\alpha_2 \}.$$
Note that $s_{\alpha_1}(X)=X$.   If $X$ also contains $\alpha_1+\alpha_2$ or $2 \alpha_1+\alpha_2$, we get $X= \Phi^+ -\{\alpha_1\} $.   Now suppose that
$$Y= \{ \alpha_1, \alpha_1+\alpha_2, 2 \alpha_1+\alpha_2 \}.$$
By  the same discussion as before, we compute the element
$\dot{s}_{\alpha_1} \varepsilon_{\alpha_2}(x) \dot{s}_{\alpha_1} \varepsilon_{\alpha_1}(y) \eta_{\alpha_1}$ by different ways,
where $x, y\ne 0$. The results are not the same  and so we also get a contradiction.  The proposition is proved.
\end{proof}

\section{Abelian category  $\mathscr{X}$ and its simple objects}

In this section, we show that the category  $\mathscr{X}(\bf G)$  introduced in last section is an abelian category.
We also classify all the simple objects in $\mathscr{X}(\bf G)$.  Firstly we introduce and study the highest weight module in $\mathscr{X}(\bf G)$.
According to Proposition \ref{wset}, the weight of  any object in $\mathscr{X}(\bf G)$ has the form $\{\theta, \Phi^+_w\}$, where  $\theta\in \widehat{\bf T}$ and $w\in W$. We write $\{\theta, w\}$ for simplicity. We let
$$ \Omega=\{\{\theta, w\}\mid \theta\in \widehat{\bf T}, w\in W\}$$
be the set of weights. Let $M\in \mathscr{X}(\bf G)$. Thus for any $\xi\in M^{\bf T}$ with weight $\{\theta, w\}$, we have $u\xi=\xi, \forall u\in {\bf U}'_{w}$.

We define an order ``$\prec$" on $ \Omega$.  For $ \{\lambda, w\}$ and $\{\theta, v\}$, we say $ \{\lambda, w\}  \prec \{\theta, v\}$ if there exists $x\in W$ such that $\lambda=\theta^{x}$ and  $x^{-1} ( \Phi^+_w ) \subsetneq \Phi^+_v$.
The second condition is also equivalent to $x( \Phi^-_v ) \subsetneq \Phi^-_w$.
Since  $M\in  \mathscr{X}({\bf G})$ has finitely many weights,  $M$ has highest weights with respect to the order $\prec$.   The corresponding vectors are called highest weight vectors of $M$. If $M$ is generated by one of its  highest weight vector, then we call $M$ a highest weight module in $\mathscr{X}(\bf G)$ .

\begin{proposition} \label{highestweightmodule}
Let $M\in \mathscr{X}({\bf G})$ be a   highest weight module generated by $\xi$ with weight $\{\theta, w\}$. Then there exists a subset $J\subset I$  such that $w=w_J$ and $J\subset I(\theta)$.   Moreover,  any weight  in $\op{Wt}(M)$  has the form $\{\theta^{v}, w_Jv^{-1}\}$ for some $v\in X_J$ and $\dim M_{\{\lambda, v\}}=1$ for any weight $\{\lambda, v\} \in \op{Wt}(M)$. In particular, $\{\theta, w_J\}$ is the unique highest weight of $M$.
\end{proposition}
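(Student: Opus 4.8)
The plan is to analyse $M$ one simple root at a time through its $SL_2$-restrictions, and then to recognise $M$ as a quotient of the standard module $\mathbb{M}(\theta)_J$. Write $\{\theta,w\}$ for the weight of the generating highest weight vector $\xi$, so that $\xi\in M_{(\theta,\Phi^+_w)}$ with $p_{\theta,\Phi^+_w}(\xi)\neq 0$, and recall from Proposition \ref{SL2module} and Proposition \ref{SL2result} that for each simple root $\alpha$ the module $\Bbbk{\bf G}_\alpha\xi$ is one of $\Bbbk_{\op{tr}}$, $\op{St}$, $\mathbb{M}(\psi)$. First I would pin down $w$. Fix a simple root $\alpha$ with $w(\alpha)\in\Phi^-$, i.e. $\alpha\notin\Phi^+_w$; then ${\bf U}_\alpha$ does not fix $\xi$, so $\Bbbk{\bf G}_\alpha\xi$ is $\op{St}$ or some $\mathbb{M}(\psi)$. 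The crucial point is that $\mathbb{M}(\psi)$ is incompatible with $\xi$ being highest: in $\mathbb{M}(\psi)$ the vector $\xi$ plays the role of the lower vector $\dot s{\bf 1}_\psi$, so $\dot s_\alpha\xi$ is a nonzero ${\bf T}$-eigenvector which is now also ${\bf U}_\alpha$-fixed. Conjugating the fixing conditions of $\xi$ by $\dot s_\alpha$ and adjoining $\alpha$, one computes that $\dot s_\alpha\xi$ has weight $\{\theta^{s_\alpha},ws_\alpha\}$, via the identity $s_\alpha(\Phi^+_w)\sqcup\{\alpha\}=\Phi^+_{ws_\alpha}$ which holds precisely when $w(\alpha)<0$. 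Taking $x=s_\alpha$ in the definition of $\prec$ yields $\{\theta,w\}\prec\{\theta^{s_\alpha},ws_\alpha\}$, contradicting maximality of $\{\theta,w\}$.

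Hence $\Bbbk{\bf G}_\alpha\xi\cong\op{St}$ for every such $\alpha$; since $\op{St}$ only occurs for a trivial central character, $\theta|_{{\bf T}_\alpha}$ is trivial and $\dot s_\alpha\xi=-\xi$. Setting $J=\{i\in I:w(\alpha_i)\in\Phi^-\}=\Delta\cap\Phi^-_w$ we obtain $J\subseteq I(\theta)$, and $\dot s_\alpha\xi=-\xi$ forces $s_\alpha(\Phi^+_w)=\Phi^+_w$ for all $\alpha\in J$. This is exactly the ``all-$\op{St}$'' configuration at the end of the proof of Proposition \ref{wset}, whose combinatorial argument (including the $G_2$ computation) gives $\Phi^-_w=\Phi^+_J$. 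Since an element of $W$ is determined by its inversion set, $w=w_J$.

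Next I would identify the weights. With $w=w_J$ and $J\subseteq I(\theta)$, the vector $\xi$ now satisfies the three defining properties of the canonical generator $\eta(\theta)_J$: it is a $\theta$-eigenvector for ${\bf T}$, it is fixed by ${\bf U}_\gamma$ for all $\gamma\in\Phi^+\setminus\Phi^+_J=\Phi^+_{w_J}$ (the unipotent radical of ${\bf P}_J$), and it is $W_J$-alternating, $\dot w'\xi=(-1)^{\ell(w')}\xi$ for $w'\in W_J$ (the extension of $\dot s_\alpha\xi=-\xi$, legitimate because $J\subseteq I(\theta)$ makes each ${\bf G}_\alpha$ act through $SL_2$ with trivial centre). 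Granting that these relations generate the annihilator of $\eta(\theta)_J$, the assignment $\eta(\theta)_J\mapsto\xi$ extends to a $\Bbbk{\bf G}$-homomorphism $\mathbb{M}(\theta)_J\twoheadrightarrow M$, surjective since $\xi$ generates $M$. By Proposition \ref{MJ=KUW} and Example \ref{Example}(a) the weights of $\mathbb{M}(\theta)_J$ are exactly $\{\theta^v,w_Jv^{-1}\}$ for $v\in X_J$; as a weight $\{\lambda,u\}$ determines both $\lambda$ and $u$, distinct $v$ give distinct weights, so each weight space of $\mathbb{M}(\theta)_J$ is one-dimensional. A quotient inherits only a subset of these weights with multiplicity at most one, which gives both assertions: every weight of $M$ has the form $\{\theta^v,w_Jv^{-1}\}$ with $v\in X_J$, and $\dim M_{\{\lambda,v\}}=1$ whenever $\{\lambda,v\}\in\op{Wt}(M)$. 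For uniqueness of the highest weight, take $v\in X_J$ with $v\neq e$ and $x=v$: then $\theta^v=\theta^x$ and $v^{-1}(\Phi^+_{w_Jv^{-1}})=\{\epsilon:v\epsilon>0,\ w_J\epsilon>0\}$; any $\epsilon<0$ in this set would force $-\epsilon\in\Phi^+_J$ and hence $v\epsilon<0$ (as $v\in X_J$ sends $\Phi^-_J$ into $\Phi^-$), a contradiction, so $v^{-1}(\Phi^+_{w_Jv^{-1}})\subseteq\Phi^+_{w_J}$, properly by the length count $\ell(w_Jv^{-1})=\ell(v)+\ell(w_J)$. Thus $\{\theta^v,w_Jv^{-1}\}\prec\{\theta,w_J\}$ for all $v\neq e$, so $\{\theta,w_J\}$ is the unique highest weight.

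The main obstacle is the middle step: producing the surjection $\mathbb{M}(\theta)_J\twoheadrightarrow M$, i.e. proving that the three rank-one relations satisfied by $\xi$ imply every relation holding in $\mathbb{M}(\theta)_J$. Equivalently, one must show that $\Bbbk{\bf P}_J\xi$ is a homomorphic image of $\Bbbk{\bf P}_J\eta(\theta)_J$ — that $\xi$ generates the (twisted) Steinberg module of the Levi subgroup of ${\bf P}_J$, and not merely a Steinberg module inside each ${\bf G}_\alpha$ separately. This passage from rank-one Steinberg behaviour to the full Levi is where the genuine work lies; the weight bookkeeping in the remaining steps is then formal.
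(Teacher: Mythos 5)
Your first paragraph is essentially the paper's own opening move: for each simple $\alpha$ with $w(\alpha)\in\Phi^-$ you rule out $\Bbbk {\bf G}_{\alpha}\xi\cong \mathbb{M}(\psi)$ because $\dot{s}_{\alpha}\xi$ would then carry the strictly larger weight $\{\theta^{s_{\alpha}}, ws_{\alpha}\}$, and you feed the resulting all-Steinberg configuration into the combinatorics of Proposition \ref{wset} to conclude $w=w_J$ with $J\subset I(\theta)$. That part is correct, and so is your final $\prec$-computation showing $\{\theta^v, w_Jv^{-1}\}\prec\{\theta,w_J\}$ for $e\neq v\in X_J$.

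The middle step, however, is a genuine gap rather than a routine verification, and you flag it yourself without closing it. The three relations you check for $\xi$ (the ${\bf T}$-eigenvalue $\theta$, invariance under ${\bf U}_{\gamma}$ for $\gamma\in\Phi^+_{w_J}$, and $W_J$-alternation) are indeed satisfied by $\eta(\theta)_J$, but nothing you prove shows they generate the annihilator of $\eta(\theta)_J$, which is what the surjection $\mathbb{M}(\theta)_J\twoheadrightarrow M$ requires. Concretely, you would need $\Bbbk {\bf P}_J\xi$ to be a quotient of the $\theta$-twisted Steinberg module of the Levi of ${\bf P}_J$, i.e.\ the rank-one Steinberg behaviour of each $\Bbbk {\bf G}_{\alpha}\xi$ must globalize to the whole of ${\bf G}_J$ --- and that globalization is essentially the content of the proposition, not a lemma you may assume. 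You also cannot import it from Proposition \ref{enough projectives} (that $\mathbb{M}(\theta)_J$ is the projective cover of $E(\theta)_J$), since that comes later and rests on the structure theory being established here; and the step ``a quotient inherits the weights with multiplicity at most one'' quietly uses additivity of $\op{ch}$ over short exact sequences, which the paper only proves after $\mathscr{X}({\bf G})$ is shown to be abelian --- itself a consequence of the present proposition. The paper sidesteps all of this by working inside $M$ directly: using the Bruhat decomposition and the fact that $\Bbbk{\bf U}_{\alpha}\xi\cong\op{St}$ for $\alpha\in J$ it first shows $M=\Bbbk{\bf U}X_J\xi$, and then determines the weight of each $\dot{v}\xi$ ($v\in X_J$) to be $\{\theta^v, w_Jv^{-1}\}$ by induction on $\ell(v)$ along the filtration $L_i=\langle L_{i-1},\dot{w}\xi\mid w\in X_J,\ \ell(w)=i\rangle$; multiplicity one then follows because distinct $v\in X_J$ give distinct weights. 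To repair your argument you would either have to reproduce that direct analysis, or supply an independent proof that $\Bbbk{\bf L}_J\xi$ is the Steinberg module of the full Levi.
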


\begin{proof}
The first conclusion is based on the argument in Proposition \ref{wset}.  If there exists a simple root $\alpha\in \Delta \cap \Phi^-_w$ such that $\Bbbk {\bf G}_{\alpha} \xi \cong  \mathbb{M}(\theta^{s_{\alpha}}|_{{\bf T}_{\alpha}}) $ or $\mathbb{M}(\op{tr})$ as $\Bbbk {\bf G}_{\alpha}$-modules, then we can get a weight higher than $\{\theta, w\}$, which is a contradiction.  Thus  $\theta|_{{\bf T}_{\alpha}}$ is trivial for all $\alpha \in \Delta \cap \Phi^-_w$. According to the proof  in Proposition \ref{wset}, we see that $w=w_J$ and $J\subset I(\theta)$.

Now let $\xi$ be a highest weight vector of weight $\{\theta, {w_J}\}$.  Firstly we  show that $\Bbbk {\bf U} W\xi$ is a submodule of $M$, which implies that $M= \Bbbk {\bf U} W\xi$. Firstly  by Bruhat decomposition, we have $ \Bbbk {\bf G} \xi= \Bbbk {\bf U} W  {\bf U}_{w_J}  \xi$.
Let $\alpha\in J$ and we consider $\dot{s}_{\alpha} \varepsilon_{\alpha}(1) \xi$. Let $\{\xi_1, \xi_2, \dots, \xi_m\}$ be a good basis of $M^{\bf T}$. We assume that  the weight of $\xi_i$ is $\{\theta_i, {w_i}\}$. According to Proposition \ref{SL2module}, we have
 \begin{align} \label{eq3}
\dot{s}_{\alpha} \varepsilon_{\alpha}(1) \xi= \sum_{i=1}^m \sum_{x\in \overline{\mathbb{F}}_q}f_{i,x}  \varepsilon_{\alpha}(x) \xi_i, \quad f_{i,x} \in \Bbbk.
\end{align}
We claim that if  $f_{j,x} \ne 0$ for some $j$,  then $\Phi^+_{w_j} \supseteq  \Phi^+_{w_J}$. Indeed, suppose there exists $j\in \mathbb{N}$ such that $f_{j,x} \ne 0$ and $\Phi^+_{w_j} \supsetneq  \Phi^+_{w_J}$.
Let $\gamma\in \Phi^+_{w_j}\setminus\Phi^+_{w_J} $. Let  $ \varepsilon_{\gamma}(1)$  act on both sides of (\ref{eq3}) and we get a contradiction since $ \varepsilon_{\gamma}(1)$ acts trivially on the  left side of  (\ref{eq3})  but not trivially on the right side of  (\ref{eq3}).
Moreover, it is not difficult to see that when  $f_{j,x} \ne 0$,  then $\theta_j=\theta$. Indeed, firstly we get $\theta_j |_{\op{Ker}\alpha}= \theta |_{\op{Ker} \alpha}$. On the other hand,  $\theta_j |_{{\bf T}_{\alpha}}$ and  $\theta|_{{\bf T}_{\alpha}}$ are both trivial.
Therefore  we get  $\{\theta_j, {w_j}\} \succeq \{\theta, {w_J}\}$ whenever $f_{j,x} \ne 0$. However noting that $\{\theta, {w_J}\}$ is a highest weight, so if $f_{j,x} \ne 0$ for some $x$ then
 $\{\theta_j, {w_j}\}= \{\theta, {w_J}\}$. Let $\eta_1, \eta_2, \dots,  \eta_k\in M^{\bf T}$ such that $\{p_{\theta, w_J}(\eta_1), p_{\theta, w_J}(\eta_2),\dots, p_{\theta, w_J}(\eta_k)\}$ is a basis of $M_{\{\theta, {w_J}\}}$. Then it is easy to  see that $\displaystyle \sum_{j=1}^k\Bbbk {\bf U}_{\alpha}\eta_j$ is a direct sum of Steinberg modules for any $\alpha\in J$.
Therefore $\Bbbk {\bf U}_{\alpha} \xi$ is the Steinberg module for any  $\alpha\in J$. Thus we get
$$ \Bbbk {\bf G} \xi= \Bbbk {\bf U} W  {\bf U}_{w_J}  \xi= \Bbbk {\bf U}  X_J {\bf U}_{w_J}  \xi.$$
Moreover, we have $x{\bf U}_{w_J} x^{-1} \subset {\bf U} $  for $x\in X_J$, and thus $M= \Bbbk {\bf G} \xi= \Bbbk {\bf U}  X_J  \xi.$

Let $L_0= \Bbbk \xi$. For $\alpha\in J$, we have $\dot{s}_{\alpha} \xi =-\xi$.  For $\alpha\in I\setminus J$, if ${\bf U}_{-\alpha} \xi=  \xi$, then we have $  \dot{s}_{\alpha} \xi=\xi $. Otherwise, the weight of  $\dot{s}_{\alpha}\xi$ is $\{\theta^{s_{\alpha}}, w_J s_{\alpha}\}$. Let $L_1= \langle L_0, s_{\alpha} \xi \mid \alpha\in I\rangle$.  We construct $L_i$ by letting $$L_i= \langle L_{i-1}, \dot{w} \xi \mid w\in X_J, \ell(w)=i\rangle.$$
 It is easy to see that there exists a minimal integer $k$ such that $L_{i-1} \subsetneq L_i$ when $i\leq k$ and  $L_{j}=L_{j+1}$ for $j\geq k$.
For $\dot{w}\xi\in L_i/L_{i-1}$, we show that the weight of $\dot{w}\xi$ is $\{\theta^{w}, w_Jw^{-1}\}$ by induction on $i$.  Let $\dot{w}\xi\in L_i/L_{i-1}$, then there exists $v\in X_J$ such that  $w=s_{\beta}v> v$ and $\dot{v}\xi \in L_{i-1}/L_{i-2}$.  If ${\bf U}_{\beta} \dot{w}\xi= \dot{w} \xi$, then ${\bf U}_{-\beta} \dot{v}\xi= \dot{v}\xi$ which implies that  $\dot{s}_{\beta} \dot{v}\xi=\dot{v}\xi$.  We get a contradiction and  thus the weight of $\dot{w}\xi$ is  $\{\theta^{w}, w_Jw^{-1}\}$.  The proposition is proved.
\end{proof}

\begin{remark} \label{successiveproperty}\normalfont
Let $M\in \mathscr{X}({\bf G})$ be a   highest weight module with highest  weight $\{\theta, w_J\}$.
According to the proof of  Proposition \ref{highestweightmodule}, the weights of $M$ has  the successive property:
if $v< sv<w=rsv$ with $r,s\in S$ and $\{\theta^{v}, w_Jv^{-1}\}$, $\{\theta^{w}, w_Jw^{-1}\}$ are weights of $M$, then
$\{\theta^{vs}, w_Jv^{-1}s\}$ is also a weight of $M$.
\end{remark}

Now we show that the category $\mathscr{X}(\bf G)$ is an abelian category. Since the category  $\mathscr{X}(\bf G)$ is a full subcategory of $\Bbbk {\bf G}$-Mod, it is enough to show that  $\mathscr{X}(\bf G)$ is stable under taking subquotients. The following proposition tells us that we just need to show that $\mathscr{X}(\bf G)$ is stable under taking submodules.

\begin{proposition} \label{quotient}
Let $M, N$ be two objects in $\mathscr{X}(\bf G)$ such that  $N$ is  a subobject of $M$, then $M/N$ is also an object in $\mathscr{X}(\bf G)$.
\end{proposition}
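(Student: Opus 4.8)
The plan is to produce a good basis of $(M/N)^{\bf T}$ directly, by extending a good basis of $N^{\bf T}$ to one of $M^{\bf T}$ and then pushing the ``new'' vectors down to the quotient. Write $\pi\colon M\to M/N$ for the (${\bf G}$-equivariant) projection. The point is that condition $(\star)$ is essentially a statement about the $\Bbbk{\bf U}$-module structure together with the finite-dimensional space $M^{\bf T}$, and both behave well once the bases are aligned.

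First I would record an observation valid for any $V\in\mathscr{X}({\bf G})$ with good basis $\{\xi_i\}$ of weights $\{\theta_i,X_i\}$: since ${\bf T}$ normalizes ${\bf U}$ and acts on each summand $\Bbbk{\bf U}\xi_i\cong\op{Ind}^{\bf U}_{{\bf U}_{X_i}}\op{tr}$ by the character $\theta_i$ twisted by conjugation of the coset representatives, the ${\bf T}$-orbit of any coset $u{\bf U}_{X_i}$ with $u\notin{\bf U}_{X_i}$ is infinite; hence the only ${\bf T}$-eigenvectors of $V$ are supported on the identity coset, so $V^{\bf T}=\bigoplus_i\Bbbk\xi_i$ and $V=\bigoplus_i\Bbbk{\bf U}\xi_i$. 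Applying condition $(\star)$ to $N$, any good basis $\{\zeta_1,\dots,\zeta_r\}$ of $N^{\bf T}$ gives $N=\bigoplus_{j=1}^r\Bbbk{\bf U}\zeta_j$. Therefore, if I can complete $\{\zeta_1,\dots,\zeta_r\}$ to a good basis $\{\zeta_1,\dots,\zeta_r,\xi_{r+1},\dots,\xi_m\}$ of $M^{\bf T}$, then $M=\bigoplus_{j\le r}\Bbbk{\bf U}\zeta_j\oplus\bigoplus_{i>r}\Bbbk{\bf U}\xi_i$, so $\Bbbk{\bf U}\xi_i\cap N=0$ for $i>r$ and $\pi$ carries each $\Bbbk{\bf U}\xi_i$ isomorphically onto $\Bbbk{\bf U}\pi(\xi_i)\cong\op{Ind}^{\bf U}_{{\bf U}_{X_i}}\op{tr}$. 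Consequently $M/N=\bigoplus_{i>r}\Bbbk{\bf U}\pi(\xi_i)$, and by the orbit observation applied to $M/N$ the linearly independent set $\{\pi(\xi_i)\mid i>r\}$ is a basis of $(M/N)^{\bf T}$; in particular $(M/N)^{\bf T}$ is finite-dimensional and this is a good basis, which is precisely condition $(\star)$ for $M/N$.

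Thus the whole proposition reduces to the key step: \emph{a good basis of $N^{\bf T}$ extends to a good basis of $M^{\bf T}$}. By Proposition~\ref{constrgoodbasis} it suffices, for each weight $\{\theta,X\}\in\op{Wt}(N)$, to show that the natural map $N_{\{\theta,X\}}\to M_{\{\theta,X\}}$ is injective, for then a basis of the image extends to a basis of $M_{\{\theta,X\}}$, and lifting the added vectors to $M_{(\theta,X)}$ produces the desired enlargement. Here $\op{Wt}(N)\subseteq\op{Wt}(M)$, since for $\zeta\in N^{\bf T}$ the subspace $\Bbbk{\bf U}\zeta$ and its ${\bf U}$-action are the same whether computed in $N$ or in $M$, so the weight of $\zeta$ is intrinsic. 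This injectivity is the main obstacle, and I would settle it by a stabilizer computation. Suppose $\omega\in N^{\bf T}$ is a nonzero combination of the weight-$\{\theta,X\}$ vectors of the good basis of $N$, so that $p_{\theta,X}(\omega)\ne0$ in $N_{\{\theta,X\}}$; I must show $p_{\theta,X}(\omega)\ne0$ in $M_{\{\theta,X\}}$. Working inside the decomposition $N=\bigoplus_j\Bbbk{\bf U}\zeta_j$, the vector $\omega$ is a base vector of a direct sum of copies of $\op{Ind}^{\bf U}_{{\bf U}_X}\op{tr}$, so its ${\bf U}$-orbit is free over ${\bf U}/{\bf U}_X$ and its stabilizer in ${\bf U}$ is exactly ${\bf U}_X$. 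Hence $\Bbbk{\bf U}\omega\cong\op{Ind}^{\bf U}_{{\bf U}_X}\op{tr}$ inside $M$ as well, so $\omega$ has weight $\{\theta,X\}$ in $M$ and therefore $p_{\theta,X}(\omega)\ne0$ in $M_{\{\theta,X\}}$, as required.

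Combining the key step with the reduction above yields that $M/N$ satisfies $(\star)$, completing the proof. Throughout, Proposition~\ref{expressionofbasis} is the technical workhorse: it guarantees that when $\omega$ is expanded in a good basis of $M$ only vectors $\xi_l$ with $X_l\supseteq X$ occur, and that $p_{\theta,X}$ isolates exactly the leading terms $X_l=X$, which is what lets the stabilizer computation translate into the nonvanishing of $p_{\theta,X}(\omega)$ in $M_{\{\theta,X\}}$.
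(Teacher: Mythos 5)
Your proof follows essentially the same route as the paper's: the paper's entire argument consists of extending a good basis of $N^{\bf T}$ to a good basis of $M^{\bf T}$ by citing Proposition~\ref{constrgoodbasis} and observing that the images of the added vectors give a good basis of $(M/N)^{\bf T}$, and you do exactly this, while additionally supplying the justification (injectivity of $N_{\{\theta,X\}}\to M_{\{\theta,X\}}$) that the paper leaves implicit in its citation. The one soft spot is your final inference from ``$\Bbbk{\bf U}\omega$ is free over ${\bf U}/{\bf U}_X$'' to ``$p_{\theta,X}(\omega)\ne 0$ in $M_{\{\theta,X\}}$'': this is not a formal consequence of Proposition~\ref{expressionofbasis} (whose hypothesis is the nonvanishing you are trying to prove), since an element with stabilizer exactly ${\bf U}_X$ could a priori still lie in $\sum_{Y\supsetneq X}M_{(\theta,Y)}$; it does hold, but needs e.g.\ a counting argument at a finite level --- if $\omega$ were a combination of good-basis vectors $\xi_i$ of $M^{\bf T}$ with every $X_i\supsetneq X$, then $\dim \Bbbk U_{q^a}\omega\le\sum_i[U_{q^a}:U_{X_i,q^a}]<[U_{q^a}:U_{X,q^a}]$ for $a$ large, contradicting freeness over ${\bf U}/{\bf U}_X$.
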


\begin{proof}
  Let $\{\xi_1, \xi_2, \dots, \xi_n\}$ be a good basis of  $N^{\bf T}$ and thus $N\displaystyle = \bigoplus_{i=1}^n \Bbbk {\bf U} \xi_i $.  Then  this basis can be extended to a good basis  $\{\xi_1, \xi_2, \dots, \xi_n, \eta_1, \dots, \eta_m\}$ of $M$ by Proposition \ref{constrgoodbasis}.  Thus $M/N \cong  \displaystyle \bigoplus_{i=1}^m \Bbbk {\bf U} \overline{\eta_i}  \in \mathscr{X}(\bf G) $, where $\overline{\eta_i}$ is the quotient of $\eta_i$ in $M/N$.
\end{proof}

\begin{lemma}  \label{5.2}
Let   $M\in \mathscr{X}(\bf G)$ and $N$ be a $\Bbbk {\bf G}$-submodule of $M$. Then $\Bbbk {\bf U} N^T$ is a $\Bbbk {\bf G}$-submodule of $N$. In particular, $\Bbbk {\bf U} N^T$ is an object in $\mathscr{X}(\bf G)$.
\end{lemma}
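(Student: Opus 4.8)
The plan is to show that $\Bbbk{\bf U}N^{\bf T}$ is stable under a generating set of ${\bf G}$, namely ${\bf U}$, ${\bf T}$ and the representatives $\dot{s}_\alpha$ of the simple reflections; these generate ${\bf G}$ by the Bruhat decomposition, since ${\bf B}={\bf T}{\bf U}$ and every $w\in W$ is a product of simple reflections. Stability under ${\bf U}$ is immediate from the definition of $\Bbbk{\bf U}N^{\bf T}$, and $\Bbbk{\bf U}N^{\bf T}\subseteq N$ since $v\in N^{\bf T}\subseteq N$ and $N$ is ${\bf G}$-stable. Stability under ${\bf T}$ is a one-line computation: for $t\in{\bf T}$, $u\in{\bf U}$ and $v\in N_\theta$ one has $t(uv)=(tut^{-1})(tv)=\theta(t)(tut^{-1})v\in\Bbbk{\bf U}N^{\bf T}$. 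Thus the whole statement reduces to proving, for each simple root $\alpha$, that $\dot{s}_\alpha\,\Bbbk{\bf U}N^{\bf T}\subseteq\Bbbk{\bf U}N^{\bf T}$.

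Next I would peel off the ${\bf U}_\alpha$-part. Since $\alpha$ is simple, $s_\alpha$ permutes $\Phi^+\setminus\{\alpha\}$, so ${\bf U}_{(\alpha)}:={\bf U}_{\Phi^+\setminus\{\alpha\}}$ is an enclosed subgroup normalized by $\dot{s}_\alpha$ (by property (a)), and ${\bf U}={\bf U}_{(\alpha)}{\bf U}_\alpha$. Writing $u=u'u_\alpha$ with $u'\in{\bf U}_{(\alpha)}$ and $u_\alpha\in{\bf U}_\alpha$ gives, for $v\in N^{\bf T}$,
\[
\dot{s}_\alpha uv=(\dot{s}_\alpha u'\dot{s}_\alpha^{-1})\,\dot{s}_\alpha u_\alpha v,
\]
where $\dot{s}_\alpha u'\dot{s}_\alpha^{-1}\in{\bf U}_{(\alpha)}\subseteq{\bf U}$. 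As $\Bbbk{\bf U}N^{\bf T}$ is already known to be ${\bf U}$-stable, it suffices to prove the single inclusion $\dot{s}_\alpha{\bf U}_\alpha v\subseteq\Bbbk{\bf U}N^{\bf T}$ for every ${\bf T}$-eigenvector $v\in N^{\bf T}$.

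Here I would pass to the rank-one subgroup. Set $P=\Bbbk{\bf G}_\alpha v$. By Proposition \ref{SL2module}, $M^\alpha=\Bbbk{\bf U}_\alpha M^{\bf T}$ is a $\Bbbk{\bf G}_\alpha$-module containing $v$, so $P\subseteq M^\alpha$, and $P\subseteq N$. Regarding $M^\alpha$ as an $SL_2(\bar{\mathbb F}_q)$-module via the natural morphism $SL_2(\bar{\mathbb F}_q)\to{\bf G}_\alpha$ and using that $\mathscr{X}(SL_2(\bar{\mathbb F}_q))$ is abelian (Proposition \ref{SL2result}), hence closed under submodules, we get $P\in\mathscr{X}({\bf G}_\alpha)$ and therefore $P=\Bbbk{\bf U}_\alpha P^{{\bf T}_\alpha}$. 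The key point is to identify these eigenvectors with vectors of $N^{\bf T}$: the subtorus $\operatorname{Ker}\alpha$ centralizes ${\bf G}_\alpha$, since $t\varepsilon_{\pm\alpha}(c)t^{-1}=\varepsilon_{\pm\alpha}(\alpha(t)^{\pm1}c)=\varepsilon_{\pm\alpha}(c)$ whenever $\alpha(t)=1$, so $\operatorname{Ker}\alpha$ acts on all of $P$ through the single scalar character $\theta|_{\operatorname{Ker}\alpha}$. As ${\bf T}={\bf T}_\alpha\cdot\operatorname{Ker}\alpha$, every ${\bf T}_\alpha$-eigenvector in $P$ is automatically a ${\bf T}$-eigenvector, that is, $P^{{\bf T}_\alpha}=P^{\bf T}\subseteq N^{\bf T}$. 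Consequently $\dot{s}_\alpha{\bf U}_\alpha v\subseteq P=\Bbbk{\bf U}_\alpha P^{\bf T}\subseteq\Bbbk{\bf U}N^{\bf T}$, which closes the argument. I expect this torus-matching step, together with the justification of the $SL_2$ reduction, to be the main obstacle: one must be careful that the eigenvectors appearing inside the rank-one module are genuine ${\bf T}$-eigenvectors lying in $N$, rather than merely ${\bf T}_\alpha$-eigenvectors.

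Finally, for the ``in particular'' assertion, once $\Bbbk{\bf U}N^{\bf T}$ is known to be a $\Bbbk{\bf G}$-submodule I would verify condition $(\star)$ directly. Since $N^{\bf T}\subseteq M^{\bf T}$ is finite-dimensional and ${\bf T}$-stable, the weight decomposition together with Proposition \ref{constrgoodbasis} furnishes a basis $\{v_1,\dots,v_k\}$ of $N^{\bf T}$ consisting of vectors of pure weight $\{\theta_i,X_i\}$. By Proposition \ref{expressionofbasis} each $v_i$ satisfies ${\bf U}_{X_i}v_i=v_i$ and has a good-basis component of $M$ with enclosed set exactly $X_i$; projecting onto that component then forces $\Bbbk{\bf U}v_i\cong\operatorname{Ind}^{\bf U}_{{\bf U}_{X_i}}\operatorname{tr}$ and makes the sum $\Bbbk{\bf U}N^{\bf T}=\bigoplus_i\Bbbk{\bf U}v_i$ direct. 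This is precisely $(\star)$, so $\Bbbk{\bf U}N^{\bf T}\in\mathscr{X}({\bf G})$.
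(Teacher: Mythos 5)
Your argument is correct and takes a genuinely more direct route than the paper's. The paper proceeds by contradiction: it expands $\dot{s}_{\alpha}\varepsilon_{\alpha}(1)\eta_k$ in a basis of $M^{\bf T}$ extending one of $N^{\bf T}$, uses the averaging trick of Lemma \ref{keylemma} to show that the coefficients of the complementary basis vectors sum to zero, and then derives a contradiction from the fact that $\Gamma_{\alpha}N$ is a ${\bf G}_{\alpha}$-submodule of $M^{\alpha}$ together with Proposition \ref{SL2result}. You instead observe that $P=\Bbbk{\bf G}_{\alpha}v$ is simultaneously a submodule of $N$ and of $M^{\alpha}$, hence an object of $\mathscr{X}({\bf G}_{\alpha})$ with $P=\Bbbk{\bf U}_{\alpha}P^{{\bf T}_{\alpha}}$, and that its ${\bf T}_{\alpha}$-eigenvectors are automatically ${\bf T}$-eigenvectors because $\operatorname{Ker}\alpha$ centralizes ${\bf G}_{\alpha}$ and ${\bf T}={\bf T}_{\alpha}\cdot\operatorname{Ker}\alpha$; this yields $P\subseteq\Bbbk{\bf U}N^{\bf T}$ in one stroke and dispenses with both Lemma \ref{keylemma} and the projection $\Gamma_{\alpha}$. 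The torus-matching step you single out as the crux is exactly what the paper's $\Gamma_{\alpha}$-argument accomplishes implicitly, and your reduction via ${\bf U}={\bf U}_{(\alpha)}{\bf U}_{\alpha}$ with ${\bf U}_{(\alpha)}$ normalized by $\dot{s}_{\alpha}$ is cleaner than the paper's restriction to the single element $\dot{s}_{\alpha}\varepsilon_{\alpha}(1)$. Two small points should be written out: first, before invoking closure of $\mathscr{X}(SL_2(\bar{\mathbb{F}}_q))$ under submodules you must check that $M^{\alpha}$ itself lies in $\mathscr{X}({\bf G}_{\alpha})$ --- this is true, since for a good basis $\{\xi_i\}$ each $\Bbbk{\bf U}_{\alpha}\xi_i$ is either trivial or free over ${\bf U}_{\alpha}$ and $(M^{\alpha})^{{\bf T}_{\alpha}}=M^{\bf T}$, but it is not contained in Proposition \ref{SL2module} as stated (the paper uses the same fact without proof in Proposition \ref{wset}); second, in the final paragraph Proposition \ref{constrgoodbasis} applies to full bases of $M^{\bf T}$, so to get the directness of $\sum_i\Bbbk{\bf U}v_i$ you should first extend your weight-adapted basis of $N^{\bf T}$ to one of $M^{\bf T}$. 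Neither point is an obstacle, and you do address the ``in particular'' clause, which the paper's own proof passes over in silence.
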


\begin{proof}  Let $\{\eta_1, \eta_2, \dots,  \eta_n\}$ be a basis of $N^{\bf T}$.  We can extend it to a basis of $M^{T}$ which is denoted by   $\{\eta_1, \eta_2, \dots,  \eta_n, \zeta_1,\zeta_2, \dots, \zeta_m\}$. Suppose that $\Bbbk {\bf U} N^T$ is not a submodule of $N$. Then there exists a simple root $\alpha$ and an integer $k$ ($1\leq k\leq n$) such that
$$\dot{s}_{\alpha} \varepsilon_{\alpha}(1) \eta_k= \sum_{i=1}^n \sum_{x\in \bar{\mathbb{F}}_q } f_{i,x} \varepsilon_{\alpha}(x) \eta_i+ \sum_{j=1}^m \sum_{y\in \bar{\mathbb{F}}_q }  g_{j,y} \varepsilon_{\alpha}(y) \zeta_j \in N$$
and $\zeta: =\displaystyle \sum_{j=1}^m \sum_{y\in \bar{\mathbb{F}}_q }  g_{j,y} \varepsilon_{\alpha}(y) \zeta_j $ is nonzero.
Using Lemma \ref{keylemma}, we have $\displaystyle \sum_{y\in \bar{\mathbb{F}}_q }  g_{j,y}=0$ for any $j=1, 2, \dots, m$.
Indeed, if there exists $j_0$ such that $\displaystyle \sum_{y\in \bar{\mathbb{F}}_q }  g_{j_0,y} \ne 0$, then we can choose a sufficiently large integer $a$ such that $$0 \ne \underline{U_{q^a}} \zeta= \underline{U_{q^a}} (\sum_{j=1}^m g_j \zeta_j) \in N, $$
where $g_j= \displaystyle \sum_{y\in \bar{\mathbb{F}}_q }  g_{j,y}$ for $j=1, 2, \dots, m$.
Thus we have $\displaystyle \sum_{j=1}^m g_j \zeta_j \in N^{\bf T}$ by Lemma \ref{keylemma},  which is a contradiction.  For $M\in \mathscr{X}({\bf G})$, recall  the projection $\Gamma_{\alpha}:  M \rightarrow  M^{\alpha}$.
It is not difficult to see that $\Gamma_{\alpha} N$ is a ${\bf G}_{\alpha}$-submodule $M^{\alpha}$. However noting that $\zeta\in \Gamma_{\alpha} N$, we get a contradiction by Proposition \ref{SL2result}.
\end{proof}

\begin{lemma} \label{5.3}
Let   $M\in \mathscr{X}(\bf G)$ and  $N$ be a $\Bbbk {\bf G}$-submodule of $M$ such that $N^{\bf T}=0$. Then $N=0$.
\end{lemma}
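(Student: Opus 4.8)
The plan is to reduce the statement to the assertion that \emph{every nonzero $\Bbbk\mathbf{G}$-submodule $N\subseteq M$ contains a nonzero $\mathbf{T}$-eigenvector}. Since $M^{\mathbf{T}}=\bigoplus_{\theta}M_\theta$ collects all such vectors, producing one inside $N$ forces $N^{\mathbf{T}}\neq 0$, contradicting the hypothesis. Fix a good basis $\{\xi_1,\dots,\xi_m\}$ of $M^{\mathbf{T}}$, so that $M=\bigoplus_i\Bbbk\mathbf{U}\xi_i$ with $\Bbbk\mathbf{U}\xi_i\cong\operatorname{Ind}^{\mathbf{U}}_{\mathbf{U}_{X_i}}\operatorname{tr}$, and write any $v\in M$ in the resulting coset basis $\{u\xi_i\}$. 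A vector lies in $M^{\mathbf{T}}$ exactly when its support involves only the trivial cosets $u\in\mathbf{U}_{X_i}$, because inside a single $\Bbbk\mathbf{U}\xi_i$ the only finitely supported $\mathbf{T}$-eigenvector up to scalar is $\xi_i$ itself: the $\mathbf{T}$-conjugation orbit of a nontrivial coset is infinite.

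First I would normalise a chosen $0\neq v\in N$. Since its support is finite, $v$ is supported on $U_{q^a}$ for some $a$, and I may enlarge $a$ so that the finitely many weights $\theta_i$ occurring in $M$ are pairwise distinct already on $T_{q^a}$. Because $\operatorname{char}\Bbbk=0$, the idempotents $e_\chi=\frac{1}{|T_{q^a}|}\sum_{t\in T_{q^a}}\chi(t)^{-1}t\in\Bbbk T_{q^a}\subseteq\Bbbk\mathbf{G}$ preserve $N$, and since $T_{q^a}$ normalises $U_{q^a}$ they also preserve the support bound $U_{q^a}$. Replacing $v$ by a nonzero $e_\chi v$, I may assume $v$ is a $T_{q^a}$-eigenvector; by the distinctness of the $\theta_i$ on $T_{q^a}$, all trivial-coset constituents of $v$ then share a single weight $\psi\in\widehat{\mathbf{T}}$. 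If $v\in M^{\mathbf{T}}$ we are done, so assume the support of $v$ meets a nontrivial coset.

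The heart of the argument is a descent on a nonnegative statistic measuring this off-diagonal support, carried out by reduction to rank one. If $u_0\notin\mathbf{U}_{X_{i_0}}$ occurs, then $\Phi^+\setminus X_{i_0}=\Phi^-_{w_{i_0}}$ (via Proposition~\ref{wset}) is nonempty and hence contains a simple root $\alpha$. By Proposition~\ref{SL2module} the space $M^\alpha=\Bbbk\mathbf{U}_\alpha M^{\mathbf{T}}$ is a $\Bbbk\mathbf{G}_\alpha$-module, and by Proposition~\ref{SL2result} it is, as an $SL_2(\bar{\mathbb{F}}_q)$-module, a direct sum of copies of $\mathbb{M}(\theta)$, $\operatorname{St}$ and $\Bbbk_{\operatorname{tr}}$, where the rank-one form of the present lemma is available. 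Choosing $\alpha$ so that the associated nontrivial root $\gamma$ has minimal height, so that no lower off-diagonal terms are created, I would apply $\dot{s}_\alpha$, $\varepsilon_{-\alpha}$ and the averaging operators $\underline{U_{q^b}}$, invoking Lemma~\ref{keylemma} to recover the pertinent eigendirections, so as to fold the nontrivial $\gamma$-component into a trivial coset. This produces a nonzero element of $N$ whose off-diagonal statistic is strictly smaller; iterating, the process terminates at a nonzero vector of $M^{\mathbf{T}}\cap N=N^{\mathbf{T}}$, the desired contradiction.

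The main obstacle is exactly this last step: extracting a genuine $\mathbf{T}$-eigenvector. Since within each $\operatorname{Ind}^{\mathbf{U}}_{\mathbf{U}_{X_i}}\operatorname{tr}$ there are no finitely supported $\mathbf{T}$-eigenvectors off the line $\Bbbk\xi_i$, the Borel action alone cannot collapse the off-diagonal unipotent support; one must use the full group $\mathbf{G}$ through the rank-one subgroups $\mathbf{G}_\alpha$, and the delicate point is to organise the simultaneous movement of all root components under $s_\alpha$ (the $G_2$-type subtlety already met in Proposition~\ref{wset}) so that the chosen support statistic genuinely decreases at each step.
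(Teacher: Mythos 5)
Your reduction to the claim that every nonzero $\Bbbk\mathbf{G}$-submodule of $M$ contains a nonzero $\mathbf{T}$-eigenvector is the right target, and the normalisation by the idempotents $e_\chi\in\Bbbk T_{q^a}$ is harmless, but the heart of your argument --- the descent that ``folds'' off-diagonal unipotent support into trivial cosets --- is asserted rather than proved, and as stated it does not go through. Three concrete problems. First, the statistic that is supposed to decrease is never defined, and the operations you apply ($\dot{s}_\alpha$, $\varepsilon_{-\alpha}$, $\underline{U_{q^b}}$) do not obviously decrease any natural candidate: rewriting $\dot{s}_\alpha u\xi_i$ in the coset basis involves the commutator relations and the rank-one relations simultaneously at all roots in the support of $u$, which typically \emph{creates} new nontrivial cosets at other positions; controlling this is exactly the hard combinatorial content, and it is the part you leave out. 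Second, the nontrivial coset $u_0\mathbf{U}_{X_{i_0}}$ you want to kill is detected by some root $\gamma\notin X_{i_0}$ that need not be simple, whereas your rank-one reduction only has access to simple-root components through $\Gamma_\alpha$ and $M^\alpha=\Bbbk\mathbf{U}_\alpha M^{\mathbf T}$; if $\omega_\alpha(u_0)=1$ for every simple root $\alpha\notin X_{i_0}$, the projection sees nothing of the offending coset. Third, even where $\Gamma_\alpha$ does see it, $\Gamma_\alpha(N)$ is a $\Bbbk\mathbf{G}_\alpha$-submodule of $M^\alpha$ but not a subset of $N$, so a $\mathbf{T}$-eigenvector produced inside $\Gamma_\alpha(N)$ by the rank-one case of the lemma does not hand you an element of $N$.

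The paper's proof sidesteps all of this by passing to the finite groups rather than to the rank-one subgroups. One chooses $a$ so large that $x\in M_{q^a}=\bigoplus_i\Bbbk U_{q^a}\xi_i$ and $M_{q^a}$ is a $\Bbbk G_{q^a}$-module; since $\operatorname{char}\Bbbk=0$ this module is semisimple, and one shows it is a quotient of $\bigoplus_i\operatorname{Ind}^{G_{q^a}}_{B_{q^a}}\Bbbk_{\theta_i}$ (the surjectivity being the only delicate point, proved by peeling off highest weight vectors starting from the simplicity of the finite Steinberg module of $G_{J,q^a}$). Frobenius reciprocity then forces every simple $\Bbbk G_{q^a}$-submodule of $\Bbbk G_{q^a}x$ to contain a nonzero $U_{q^a}$-fixed vector, necessarily of the form $\sum_i f_i\,\underline{U_{w_i,q^a}}\,\xi_i$; applying $\underline{U_{q^a}}$ and Lemma~\ref{keylemma} converts this into the genuine $\mathbf{T}$-eigenvector $\sum_i f_iq^{a\ell(w_i)}\xi_i\in N^{\mathbf T}$, the desired contradiction. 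To rescue your approach you would need, at minimum, to define the descent statistic and verify the strict decrease through the commutator relations; as written the proposal has a genuine gap precisely at the step you yourself flag as the main obstacle.
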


\begin{proof}  Suppose that $N \ne 0$ and let $x\in N$ be a nonzero element. Let $\{\xi_1,\xi_2, \dots, \xi_m\}$ be a good basis of $M$ and suppose the weight of $\xi_i$ is $\{\theta_i, {w_i}\}$ by Proposition \ref{wset} for $i=1,2, \dots, m$. Thus we have $u\xi_i=\xi_i, \forall u\in {\bf U}'_{w_i}$.  For $a\in \mathbb{N}$, let $M_{q^a}= \displaystyle \bigoplus_{i=1}^m \Bbbk U_{q^a} \xi_i$.  Suppose $a$ is big enough such that $x\in M_{q^a}$ and $M_{q^a}$ is a $\Bbbk {\bf G}_{q^a}$-module.  Let $L$ be a simple $\Bbbk {\bf G}_{q^a}$-submodule of $\Bbbk {\bf G}_{q^a}x$.  In the following we show  that $L^{U_{q^a}} \ne 0$.

Since $M_{q^a}$ is a semisimple $\Bbbk {\bf G}_{q^a}$-module and $L$ is a $\Bbbk {\bf G}_{q^a}$-submodule of $M_{q^a}$,  there exists a surjective morphism form $M_{q^a}$ to $L$. We will construct a surjective morphism from $\displaystyle  \bigoplus_{i=1}^m \text{Ind}^{G_{q^a}}_{B_{q^a}} \Bbbk_{\theta_i}$ to $M_{q^a}$. Then by Frobenius reciprocity, we get
$$\text{Hom}_{G_{q^a}} (\bigoplus_{i=1}^m \text{Ind}^{G_{q^a}}_{B_{q^a}} \Bbbk_{\theta_i}, L) \cong \bigoplus_{i=1}^m \text{Hom}_{B_{q^a}}(\Bbbk_{\theta_i}, L|_{B_{q^a}}),$$
  which implies that $L^{U_{q^a}} \ne 0$. Noting that
  $$  \text{Hom}_{G_{q^a}} (\bigoplus_{i=1}^m \text{Ind}^{G_{q^a}}_{B_{q^a}} \Bbbk_{\theta_i}, M_{q^a}) \cong \bigoplus_{i=1}^m \text{Hom}_{B_{q^a}}(\Bbbk_{\theta_i}, M_{q^a}|_{B_{q^a}}), $$
  and $M_{q^a}=\displaystyle \bigoplus_{i=1}^m \Bbbk U_{w_i, q^a} \xi_i$, we will  prove that
  $$ \Psi: \bigoplus_{i=1}^m \text{Ind}^{G_{q^a}}_{B_{q^a}} \Bbbk_{\theta_i} \longrightarrow M_{q^a}: \quad  {\bf 1}_{\theta_i} \mapsto  \underline{ U_{w_i, q^a} } \xi_i, \ \forall i=1,2,\dots, m$$
is a surjective morphism. It is enough to show that one submodule  $V$ of $M_{q^a}$ containing $\underline{ U_{w_1, q^a} } \xi_1, \underline{ U_{w_2, q^a} } \xi_2, \dots, \underline{ U_{w_m, q^a} } \xi_m$ is $M_{q^a}$ itself.

Let $\{\theta, w_J\}$ be a highest weight in $\op{Wt}(M)$ and $\xi_{i_0}$ be an element of weight $\{\theta, w_J\}$.  Then according to the proof of Proposition \ref{highestweightmodule}, $\Bbbk {\bf U}_{\alpha} \xi_{i_0} $ is the Steinberg module of  ${\bf G}_{\alpha}$ for any $\alpha\in J$. Thus $\dot{s}_{\alpha} \xi_{i_0}=-\xi_{i_0}$ and $$\dot{s}_{\alpha}  \varepsilon (x)\xi_{i_0}=(\varepsilon (-\frac{1}{x})-1 ) \xi_{i_0}.$$
Therefore $\Bbbk U_{w_J, q^a}\xi_{i_0}$ is the Steinberg module of $G_{J, q^a}$. It is well know that this module is a simple  $\Bbbk G_{J, q^a}$-module
when $\text{char}\ \Bbbk =0$. Noting that $\underline{U_{w_J, q^a}}\xi_{i_0} \in V$,  we get $\xi_{i_0} \in V$, which implies that
$\Bbbk W \xi_{i_0} \in V$.  If $\Bbbk W \xi_{i_0} = M^{\bf T}$, then we have proved $V=M_{q^a}$.
By the proof of Proposition \ref{highestweightmodule}, we see that  $\Bbbk {\bf U} W\xi_{i_0}$ (resp. $\Bbbk {\bf U}_{q^a} W\xi_{i_0}$) is a submodule of $M$ (resp. $M_{q^a}$).  According to Proposition \ref{quotient}, $M/\Bbbk {\bf U} W\xi_{i_0}$ is an object in $ \mathscr{X}(\bf G)$.
Let $\{\lambda, K\}$ be a highest weight of $M/\Bbbk {\bf U} W\xi_{i_0}$. Thus $\{\lambda, K\}$ is also a weight of $M$.
Then there exists an element  $\xi_{_{i_1}}$ such that $\overline{\xi}_{i_1}$ is the  corresponding highest weight vector of  $M/\Bbbk {\bf U} W\xi_{i_0}$. Thus we get $\overline{\xi}_{i_1} \in V/ \Bbbk {\bf U}_{q^a} W\xi_{i_0}$ by the same discussion as before. Hence we get $\xi_{i_1}\in V$.
So $\Bbbk W \xi_{i_0} + \Bbbk W \xi_{i_1} \in V$.  If $\Bbbk W \xi_{i_0} + \Bbbk W \xi_{i_1}= M^{\bf T}$, then we get  $V=M_{q^a}$.
Otherwise, we can have the similar discussion step by step and finally we get $M^{T} \subseteq V$, which implies that $V=M_{q^a}$.

Now we have $(\Bbbk {\bf G}_{q^a}x)^{U_{q^a}} \ne 0$. Noting that $\displaystyle (M_{q^a})^{U_{q^a}}= \bigoplus_{i=1}^m   \Bbbk \underline{ U_{w_i, q^a} } \xi_i,$ there exists an element
$$\zeta= \sum_{i=1}^m   f_i \underline{ U_{w_i, q^a} } \xi_i \in \Bbbk {\bf G}_{q^a}x \cap  (M_{q^a})^{U_{q^a}},$$
where $f_i\in \Bbbk$ for $i=1,2,\dots, m$. Therefore we get
$$ \underline{U_{q^a}} \zeta =\underline{U_{q^a}}  \sum_{i=1}^m f_iq^{a\ell(w_i)}\xi_i \in \Bbbk {\bf G}_{q^a}x\subseteq N.$$
Following Lemma \ref{keylemma}, we have $\displaystyle \sum_{i=1}^m f_iq^{a\ell(w_i)}\xi_i \in N$.  This element is in $N^{\bf T}$ and thus we get a contradiction. The lemma is proved.

\end{proof}

\begin{theorem}
The category $\mathscr{X}(\bf G)$ is an abelian category.
\end{theorem}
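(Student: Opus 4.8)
The plan is to verify the three defining axioms of an abelian category for $\mathscr{X}(\bf G)$, viewed as a full subcategory of $\Bbbk{\bf G}$-Mod. The ambient category $\Bbbk{\bf G}$-Mod is abelian, so the only real content is to check that $\mathscr{X}(\bf G)$ is closed under the formation of kernels and cokernels, and that every monic-epic morphism is an isomorphism; the latter is automatic once we know $\mathscr{X}(\bf G)$ is an additive full subcategory closed under subquotients, since a morphism that is both monic and epic \emph{in $\Bbbk{\bf G}$-Mod} is already an isomorphism there. Thus everything reduces to showing: $(1)$ $\mathscr{X}(\bf G)$ is additive, i.e.\ closed under finite direct sums and contains the zero object; $(2)$ $\mathscr{X}(\bf G)$ is closed under taking subobjects; $(3)$ $\mathscr{X}(\bf G)$ is closed under taking quotients. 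Item $(3)$ is already established in Proposition \ref{quotient}, and item $(1)$ is immediate from the definition of condition $(\star)$: the direct sum of two modules satisfying $(\star)$ again admits a good basis obtained by concatenation, and the zero module trivially satisfies $(\star)$.

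\emph{The crux is item $(2)$}, closure under submodules. Let $M\in\mathscr{X}(\bf G)$ and let $N$ be a $\Bbbk{\bf G}$-submodule of $M$. Here is where I would invoke the two lemmas just proved. By Lemma \ref{5.2}, the submodule $\Bbbk{\bf U}N^{\bf T}$ of $N$ is itself an object of $\mathscr{X}(\bf G)$ (it manifestly satisfies $(\star)$: a good basis of $N^{\bf T}$ extends to one of $M^{\bf T}$ by Proposition \ref{constrgoodbasis}, so each $\Bbbk{\bf U}\xi_i$ is of the required induced form). The plan is then to argue that in fact $N=\Bbbk{\bf U}N^{\bf T}$, so that $N$ is an object of $\mathscr{X}(\bf G)$. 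To see this, consider the quotient $N/\Bbbk{\bf U}N^{\bf T}$. Since $\Bbbk{\bf U}N^{\bf T}\in\mathscr{X}(\bf G)$ is a submodule of $M\in\mathscr{X}(\bf G)$, Proposition \ref{quotient} applies to give that $M/\Bbbk{\bf U}N^{\bf T}$ lies in $\mathscr{X}(\bf G)$, and $N/\Bbbk{\bf U}N^{\bf T}$ is a $\Bbbk{\bf G}$-submodule of it. The key observation is that this quotient has trivial ${\bf T}$-fixed part: every ${\bf T}$-eigenvector of $N$ already lies in $N^{\bf T}$, hence maps to $0$, so $(N/\Bbbk{\bf U}N^{\bf T})^{\bf T}=0$. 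Applying Lemma \ref{5.3} to the ambient object $M/\Bbbk{\bf U}N^{\bf T}\in\mathscr{X}(\bf G)$ and its submodule $N/\Bbbk{\bf U}N^{\bf T}$, we conclude that $N/\Bbbk{\bf U}N^{\bf T}=0$, i.e.\ $N=\Bbbk{\bf U}N^{\bf T}\in\mathscr{X}(\bf G)$.

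\textbf{The main obstacle} I anticipate is the verification that $(N/\Bbbk{\bf U}N^{\bf T})^{\bf T}=0$ is legitimately set up so that Lemma \ref{5.3} applies — one must be careful that the relevant object living in $\mathscr{X}(\bf G)$ is $M/\Bbbk{\bf U}N^{\bf T}$ (which requires Proposition \ref{quotient} with $\Bbbk{\bf U}N^{\bf T}$ as the subobject, legitimate precisely because Lemma \ref{5.2} certifies $\Bbbk{\bf U}N^{\bf T}\in\mathscr{X}(\bf G)$), and that the image of $N^{\bf T}$ is exactly the $0$ subspace of eigenvectors. The statement $(N/\Bbbk{\bf U}N^{\bf T})^{\bf T}=0$ hinges on the fact that taking ${\bf T}$-eigenspaces is exact here, which follows from semisimplicity of finite-dimensional $\Bbbk{\bf T}$-modules (Proposition \ref{SemisimpleTBmodule}) together with the direct-sum decomposition in condition $(\star)$; I would spell this out carefully. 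Once closure under subobjects is in hand, the remaining axioms are formal, and the theorem follows. I would close the argument by remarking that, combined with Proposition \ref{quotient}, $\mathscr{X}(\bf G)$ is closed under subquotients and kernels/cokernels computed in $\Bbbk{\bf G}$-Mod agree with those in $\mathscr{X}(\bf G)$, which is exactly the assertion that $\mathscr{X}(\bf G)$ is an abelian subcategory.
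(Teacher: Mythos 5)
Your proposal is correct and follows essentially the same route as the paper: reduce to closure under submodules via Proposition \ref{quotient}, set $N_1=\Bbbk{\bf U}N^{\bf T}$, invoke Lemma \ref{5.2} to place $N_1$ in $\mathscr{X}({\bf G})$, observe $(N/N_1)^{\bf T}=0$, and conclude $N=N_1$ by applying Lemma \ref{5.3} inside $M/N_1$. Your extra care about why $(N/N_1)^{\bf T}=0$ (the paper dismisses this as ``easy to see'') is a reasonable elaboration, not a divergence.
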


\begin{proof}
  By Proposition \ref{quotient}, it is enough to show that any submodule of $M\in \mathscr{X}(\bf G)$ is also an object in $\mathscr{X}(\bf G)$.
Let $N$ be a submodule of $M$ and set $N_1= \Bbbk {\bf U} N^{\bf T}$.  According to Lemma \ref{5.2}, we see that $N_1\in \mathscr{X}(\bf G)$.
Since $N\subseteq M$, it is easy to see that $(N/N_{1})^{\bf T} =0$. Noting that $M/N_{1} \in \mathscr{X}(\bf G) $ and using Lemma \ref{5.3},   we have $N_1=N$, which implies that $N\in \mathscr{X}(\bf G)$.

\end{proof}

\begin{proposition}
  Given a short exact sequence
$$0 \longrightarrow M_1  \longrightarrow M  \longrightarrow M_2  \longrightarrow 0$$
in the category $\mathscr{X}(\bf G)$, one has that $M^{\bf T} \cong M^{\bf T}_1 \oplus M^{\bf T}_2$ as ${\bf T}$-modules and
$\op{ch} M= \op{ch} M_1+ \op{ch} M_2 $. In particular, we have $\op{Wt}(M)=\op{Wt}(M_1) \cup \op{Wt}(M_2)$.
\end{proposition}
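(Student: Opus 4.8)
The plan is to reduce everything to a single compatible good basis already implicit in the proof of Proposition \ref{quotient}. Since $\mathscr{X}({\bf G})$ is abelian, I identify $M_1$ with a subobject of $M$ and $M_2$ with $M/M_1$. First I would fix a good basis $\{\xi_1,\dots,\xi_n\}$ of $M_1^{\bf T}$ and, exactly as in Proposition \ref{quotient} (which invokes Proposition \ref{constrgoodbasis}), extend it to a good basis $\{\xi_1,\dots,\xi_n,\eta_1,\dots,\eta_m\}$ of $M^{\bf T}$ whose images $\{\overline{\eta}_1,\dots,\overline{\eta}_m\}$ form a good basis of $M_2^{\bf T}$, with $M/M_1\cong\bigoplus_{j=1}^m\Bbbk{\bf U}\overline{\eta}_j$. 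Everything then follows by bookkeeping with this one basis.

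For the first assertion, each $\eta_j$ lies in some eigenspace $M_{\phi_j}$ and its image $\overline{\eta}_j$ lies in $(M_2)_{\phi_j}$ with the same eigenvalue $\phi_j$. Hence $\bigoplus_{j=1}^m\Bbbk\eta_j\subseteq M^{\bf T}$ is a ${\bf T}$-submodule carried isomorphically, and compatibly with the ${\bf T}$-action, onto $M_2^{\bf T}$ by the quotient map. As vector spaces $M^{\bf T}=M_1^{\bf T}\oplus\bigoplus_{j=1}^m\Bbbk\eta_j$ with $M_1^{\bf T}$ a ${\bf T}$-submodule, so this yields $M^{\bf T}\cong M_1^{\bf T}\oplus M_2^{\bf T}$ as ${\bf T}$-modules; equivalently it supplies the surjectivity of $M^{\bf T}\to M_2^{\bf T}$, which is the only point beyond the formal left-exactness of $(-)^{\bf T}$, the splitting being consistent with the semisimplicity furnished by Proposition \ref{SemisimpleTBmodule}.

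For the character identity I would argue that the weight of a good basis vector is intrinsic: if $\xi\in M_\theta$ and $\Bbbk{\bf U}\xi\cong\op{Ind}^{\bf U}_{{\bf U}_X}\op{tr}$, then its weight $\{\theta,X\}$ depends only on $\theta$ and on the $\Bbbk{\bf U}$-isomorphism type of $\Bbbk{\bf U}\xi$, not on the ambient module. Thus the weight of $\xi_i$ computed in $M_1$ agrees with its weight in $M$, while the isomorphism $\Bbbk{\bf U}\eta_j\cong\Bbbk{\bf U}\overline{\eta}_j$ coming from the direct-sum decomposition shows the weight of $\eta_j$ in $M$ equals that of $\overline{\eta}_j$ in $M_2$. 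Since $\dim M_{\{\theta,X\}}$ is exactly the number of good basis vectors of $M^{\bf T}$ of weight $\{\theta,X\}$ (this is what Proposition \ref{constrgoodbasis} guarantees, the count being independent of the chosen good basis), separating the $\xi_i$ from the $\eta_j$ gives $\dim M_{\{\theta,X\}}=\dim(M_1)_{\{\theta,X\}}+\dim(M_2)_{\{\theta,X\}}$ for every $\{\theta,X\}$, that is $\op{ch}M=\op{ch}M_1+\op{ch}M_2$. The statement about $\op{Wt}$ is then immediate, since all three multiplicities are non-negative integers and so $\dim M_{\{\theta,X\}}\neq 0$ precisely when one of the summands is nonzero. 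The main obstacle is the intrinsic-weight claim, because $M_{\{\theta,X\}}$ is defined by a quotient by $\sum_{Y\supsetneq X}M_{(\theta,Y)}$ formed inside $M$; I would settle it by checking that restricting to the ${\bf U}$-submodule $\Bbbk{\bf U}\xi$ (resp. passing to $\Bbbk{\bf U}\overline{\eta}_j$) leaves unchanged the set of $\gamma$ for which $\varepsilon_\gamma(x)$ fixes the vector, so that $p_{\theta,X}(\xi)$ stays nonzero in each ambient module.
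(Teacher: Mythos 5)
Your proof is correct, and the second half --- deducing $\op{ch}M=\op{ch}M_1+\op{ch}M_2$ by extending a good basis of $M_1^{\bf T}$ to one of $M^{\bf T}$ via Proposition \ref{constrgoodbasis} and pushing the complement down to $M_2$ --- is exactly the paper's argument. Where you genuinely diverge is the first assertion: the paper does not obtain the splitting $M^{\bf T}\cong M_1^{\bf T}\oplus M_2^{\bf T}$ from the basis at all, but from a homological fact, namely that $\Bbbk_\theta$ is an injective $\Bbbk{\bf B}$-module (Farkas--Snider, since $\Bbbk{\bf B}$ is locally Wedderburn in characteristic zero), so that $\op{Hom}_{\bf B}(-,\Bbbk_\theta)\cong\op{Hom}_{\bf T}(\Bbbk_\theta,-)$ is exact on $\mathscr{X}({\bf G})$ and the left-exact functor $(-)^{\bf T}$ is in fact exact. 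That route is shorter, is independent of the good-basis combinatorics, and gives the exactness of $(-)^{\bf T}$ as a reusable statement; your route keeps everything inside the category's own machinery and makes the splitting explicit, at the cost of leaning on the point you yourself flag: that the weight $\{\theta,X\}$ of a good-basis vector $\xi$ of $M_1^{\bf T}$ survives in the larger ambient module $M$. Your proposed fix (the stabilizer set $\{\gamma\mid\varepsilon_\gamma(x)\xi=\xi\}$ is intrinsic to $\xi$) shows $\xi\in M_{(\theta,X)}$ but does not by itself give $p_{\theta,X}(\xi)\neq 0$ in $M$, since a priori $\xi$ could lie in $\sum_{Y\supsetneq X}M_{(\theta,Y)}$ while still having exact stabilizer ${\bf U}_X$. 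The clean way to close this is to use the full strength of condition ($\star$): expanding $\xi$ in a good basis of $M^{\bf T}$ as in Proposition \ref{expressionofbasis}, if every contributing basis vector had weight set strictly containing $X$, then the ${\bf U}$-translates of $\xi$ would satisfy extra linear relations and $\Bbbk{\bf U}\xi$ would be a proper quotient of $\op{Ind}^{\bf U}_{{\bf U}_X}\op{tr}$, contradicting goodness of $\{\xi_i\}$ in $M_1$. This is the same implicit step the paper itself makes in Proposition \ref{quotient}, so it is not a defect peculiar to your argument, but it is the one place your write-up should be tightened.
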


\begin{proof} According to  \cite[Section 2]{FS},  $\Bbbk {\bf B}$ is a locally Wedderburn algebra when $\Bbbk$ is an algebraically closed field of characteristic zero. Thus by \cite[Lemma 3]{FS}, $\Bbbk_{\theta}$ is an injective $\Bbbk {\bf B}$-module which implies the exactness of $\text{Hom}_{\bf B}(-, \Bbbk_{\theta})$. It is easy to see that  for $M\in \mathscr{X}(\bf G)$, we have
$$ \text{Hom}_{\bf T} (\Bbbk_{\theta}, M) \cong \text{Hom}_{\bf B} (M, \Bbbk_{\theta}),$$
which implies the exactness of $\text{Hom}_{\bf T}(\Bbbk_{\theta}, -)$. Thus  we get $M^{\bf T} \cong M^{\bf T}_1 \oplus M^{\bf T}_2$ as ${\bf T}$-modules.

Let $B_1$ be a good basis of $M_1$. We can extend it to a good basis $B_1 \cup B_2$ of $M$ by Proposition \ref{constrgoodbasis}.
Thus $\overline{B_2}$ (the quotient of $B_2$ in $M/M_1$) is a good basis of $M_2$.
Hence  $\text{ch} M= \text{ch} M_1+ \text{ch} M_2 $.

\end{proof}

\begin{corollary} \label{finitelength}
The category  $\mathscr{X}(\bf G)$ is noetherian and artinian. In particular, each object in $\mathscr{X}(\bf G)$ is of finite length.
\end{corollary}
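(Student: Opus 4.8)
The plan is to use $\dim M^{\mathbf T}$ as a length function that bounds every chain of subobjects. Since $\mathscr{X}(\mathbf G)$ is already known to be abelian (and in particular closed under submodules and quotients), it suffices to exhibit, for each $M\in\mathscr{X}(\mathbf G)$, a finite upper bound on the lengths of strictly increasing chains of submodules of $M$; this single bound simultaneously yields the noetherian, artinian, and finite-length conclusions.

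First I would record the crucial nonvanishing statement: every nonzero object $P$ of $\mathscr{X}(\mathbf G)$ satisfies $P^{\mathbf T}\neq 0$. This is immediate from Lemma \ref{5.3} applied with $M=N=P$, for if $P^{\mathbf T}=0$ the lemma forces $P=0$. Because condition $(\star)$ guarantees that $P^{\mathbf T}$ is finite-dimensional, this gives $\dim P^{\mathbf T}\geq 1$ for every nonzero $P\in\mathscr{X}(\mathbf G)$.

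Next I would invoke the additivity of $\dim(-)^{\mathbf T}$ established in the preceding proposition: for any short exact sequence $0\to M_1\to M\to M_2\to 0$ in $\mathscr{X}(\mathbf G)$, the isomorphism $M^{\mathbf T}\cong M_1^{\mathbf T}\oplus M_2^{\mathbf T}$ gives $\dim M^{\mathbf T}=\dim M_1^{\mathbf T}+\dim M_2^{\mathbf T}$. Now fix $M\in\mathscr{X}(\mathbf G)$ together with a strictly increasing chain of submodules $0=N_0\subsetneq N_1\subsetneq\cdots\subsetneq N_\ell=M$. Each $N_i$ again lies in $\mathscr{X}(\mathbf G)$, and each subquotient $N_i/N_{i-1}$ is a nonzero object of $\mathscr{X}(\mathbf G)$, so the nonvanishing step gives $\dim(N_i/N_{i-1})^{\mathbf T}\geq 1$; applying additivity along the chain yields
$$\dim M^{\mathbf T}=\sum_{i=1}^{\ell}\dim\bigl(N_i/N_{i-1}\bigr)^{\mathbf T}\geq \ell.$$
Hence $\ell\leq\dim M^{\mathbf T}<\infty$, so every chain of submodules of $M$ has length at most $\dim M^{\mathbf T}$. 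This bounds both ascending and descending chains, proving that $\mathscr{X}(\mathbf G)$ is noetherian and artinian and that every object has finite length, indeed of length at most $\dim M^{\mathbf T}$.

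I do not expect a serious obstacle, since the substantive work is already contained in Lemma \ref{5.3} (nonvanishing of the $\mathbf T$-fixed part on nonzero objects) and in the additivity of the character on short exact sequences. The only point deserving care is to confirm that each term $N_i$ of the chain genuinely belongs to $\mathscr{X}(\mathbf G)$, so that the nonvanishing and additivity results may be applied to the subquotients; but this is precisely the content of the theorem asserting that $\mathscr{X}(\mathbf G)$ is abelian, namely its closure under submodules and quotients.
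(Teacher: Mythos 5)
Your argument is correct and is essentially the paper's own (the corollary is left implicit there, following immediately from the additivity of $\dim(-)^{\bf T}$ on short exact sequences and the finite-dimensionality of $M^{\bf T}$). The only remark is that the nonvanishing step does not even need Lemma \ref{5.3}: condition $(\star)$ already forces a nonzero object to have $M^{\bf T}\neq 0$, since $M=\bigoplus_i\Bbbk{\bf U}\xi_i$ with the $\xi_i$ a basis of $M^{\bf T}$.
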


Let $\Omega_0=\{\{\theta, {w_J}\} \mid \theta\in \widehat{\bf T}, J\subset I(\theta)\}$. Then by Proposition \ref{highestweightmodule}, the highest weights of an object $M\in \mathscr{X}(\bf G)$ are in $\Omega_0$. By definition, all simple objects in $\mathscr{X}({\bf G})$ are highest weight modules.

\begin{lemma}
Let $M, N$ be two simple objects in  $\mathscr{X}({\bf G})$ with the same highest weight $\{\theta, {w_J}\} \in \Omega_0$. Then $M\cong N$.
\end{lemma}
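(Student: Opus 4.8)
The plan is to construct a single highest weight module that surjects onto both $M$ and $N$ as a diagonal submodule of $M\oplus N$, and then to show that the two resulting kernels coincide; the whole argument will hinge on a one-dimensionality statement for the top layer. Since every simple object of $\mathscr{X}({\bf G})$ is a highest weight module, Proposition \ref{highestweightmodule} lets me fix generators $\xi\in M$ and $\zeta\in N$, each a highest weight vector of weight $\{\theta,w_J\}$, with $\dim M_{\{\theta,w_J\}}=\dim N_{\{\theta,w_J\}}=1$. I set $v_0=(\xi,\zeta)\in M\oplus N$ and $P=\Bbbk{\bf G}v_0$. As $\mathscr{X}({\bf G})$ is abelian and closed under finite direct sums, $P\in\mathscr{X}({\bf G})$. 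The element $v_0$ is a $\theta$-eigenvector fixed by ${\bf U}_\gamma$ for $\gamma\in\Phi^+_{w_J}$ and has nonzero image in the $\{\theta,w_J\}$-layer, so its weight is $\{\theta,w_J\}$; moreover $\op{Wt}(P)\subseteq\op{Wt}(M)\cup\op{Wt}(N)$, all of whose members are $\preceq\{\theta,w_J\}$. Hence $P$ is a highest weight module with highest weight $\{\theta,w_J\}$, and the coordinate projections $p_M\colon P\to M$ and $p_N\colon P\to N$ are surjective because $\xi$ and $\zeta$ generate $M$ and $N$.

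The key computation is $\dim P_{(\theta,\Phi^+_{w_J})}=1$. Working in a good basis of $P^{\bf T}$ and repeating the linear-independence argument of Proposition \ref{expressionofbasis}, one sees that $P_{(\theta,X)}$ is spanned exactly by those good basis vectors of ${\bf T}$-eigenvalue $\theta$ whose invariance set contains $X$, so that $\dim P_{(\theta,\Phi^+_{w_J})}=\sum_{X'\supseteq\Phi^+_{w_J}}\dim P_{\{\theta,X'\}}$. By Proposition \ref{wset} each weight of $P$ has the form $\{\theta,\Phi^+_w\}$, and any enclosed $X'\supsetneq\Phi^+_{w_J}$ would yield a weight $\{\theta,X'\}\succ\{\theta,w_J\}$, which is impossible since $\{\theta,w_J\}$ is the unique highest weight of $P$ by Proposition \ref{highestweightmodule}. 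Thus only the term $X'=\Phi^+_{w_J}$ survives, and it equals $\dim P_{\{\theta,w_J\}}=1$.

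To finish, I observe that $M$ and $N$ being simple forces $\ker p_M$ and $\ker p_N$ to be maximal submodules of $P$, with $\ker p_M\subseteq 0\oplus N$ and $\ker p_N\subseteq M\oplus 0$. If $\ker p_M\neq\ker p_N$, then their sum is all of $P$, so I may write $v_0=a+b$ with $a\in\ker p_M$ and $b\in\ker p_N$; comparing the two coordinates forces $a=(0,\zeta)$ and $b=(\xi,0)$. Both of these lie in $P_{(\theta,\Phi^+_{w_J})}$ and are linearly independent, contradicting $\dim P_{(\theta,\Phi^+_{w_J})}=1$. Hence $\ker p_M=\ker p_N$, and therefore $M\cong P/\ker p_M=P/\ker p_N\cong N$. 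The step deserving the most care is precisely this one-dimensionality $\dim P_{(\theta,\Phi^+_{w_J})}=1$, since it is what prevents $P$ from carrying independent copies of the top weight spaces of $M$ and of $N$; everything else is formal once it is established, and it rests on combining the good-basis bookkeeping of Proposition \ref{expressionofbasis}, the shape of weights from Proposition \ref{wset}, and the uniqueness of the highest weight from Proposition \ref{highestweightmodule}.
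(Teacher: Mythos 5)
Your proposal is correct and follows essentially the same route as the paper: both form the cyclic submodule of $M\oplus N$ generated by the sum of the two highest weight vectors and exploit the fact that a highest weight module has a one-dimensional top weight layer (Proposition \ref{highestweightmodule}) to see that this submodule is proper, hence simple, and projects isomorphically onto each factor. Your kernel-comparison phrasing is a slightly longer bookkeeping of the same idea, but the substance is identical.
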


\begin{proof} Let $\xi$ be the highest weight vector of $M$  and $\eta$  be the highest weight vector of  $N$.
Then $\xi+\eta$ is a highest weight vector of $M \oplus N$.  Let $V$ be the submodule of $M \oplus N$  generated by $\xi+\eta$.
Noting that  $\dim V_{\{\theta, {w_J}\}}=1$,  we see that  $V\ne M \oplus N $. Since $ M \oplus N $ has only two composition factors,   $V$
is a simple object in $\mathscr{X}({\bf G})$. We let $p_M:  M \oplus N \rightarrow M $ and $p_N:  M \oplus N \rightarrow N$.  Then $p_M, p_N$ are $\Bbbk {\bf G}$-module morphism. Since $p_M|_V$ and $p_N|_V$ are nonzero, we get $M \cong  N \cong (M \oplus N)/ V$.
\end{proof}

The $\Bbbk {\bf G}$-modules $E(\theta)_J$ has highest weight $\{\theta, {w_J}\}\in \Omega_0$. By  \cite[Proposition 2.8]{CD3},   $E(\theta_1)_{K_1}$ is isomorphic to $E(\theta_2)_{K_2}$ as $\Bbbk {\bf G}$-modules if and only if $\theta_1=\theta_2$ and $K_1=K_2$.
Thus we have classified  all the simple objects in $\mathscr{X}({\bf G})$ by the above lemma.

\begin{theorem} \normalfont
The  $\Bbbk {\bf G}$-modules $E(\theta)_J$  ($J\subset I(\theta)$) are all the simple objects in $\mathscr{X}({\bf G})$.  Moreover,  we have a bijection:
$$ \Omega_0 \longrightarrow \op{Irr} (\mathscr{X}({\bf G})):  \quad (\theta, w_J) \mapsto E(\theta)_J,$$
where $\op{Irr} (\mathscr{X}({\bf G}))$ is the set of the simple objects in $\mathscr{X}({\bf G})$.

\end{theorem}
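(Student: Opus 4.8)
The plan is to assemble the results already established in this section with the basic properties of the modules $E(\theta)_J$ recalled in Section~2. Three points must be checked: that the assignment $(\theta,w_J)\mapsto E(\theta)_J$ is well defined into $\op{Irr}(\mathscr{X}({\bf G}))$, that it is surjective, and that it is injective. For the first point I would recall that $E(\theta)_J$ is an irreducible $\Bbbk{\bf G}$-module (as noted after Proposition~\ref{DesEJ}) lying in $\mathscr{X}({\bf G})$ (as observed at the beginning of Section~4); hence it is a simple object of the category, and its highest weight is $\{\theta,w_J\}\in\Omega_0$, as recorded just before the statement of the theorem and consistent with Example~\ref{Example}(b). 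This makes the map well defined.

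For surjectivity I would take an arbitrary simple object $M$ and show $M\cong E(\theta)_J$ for a suitable pair $(\theta,J)$. Condition~($\star$) forces $M^{\bf T}$ to be finite dimensional, so $\op{Wt}(M)$ is a finite subset of $\Omega$ and therefore possesses maximal elements for the order $\prec$; choosing a corresponding highest weight vector and invoking the simplicity of $M$, this vector generates $M$, so $M$ is a highest weight module. Proposition~\ref{highestweightmodule} then pins its highest weight down to the form $\{\theta,w_J\}$ with $J\subset I(\theta)$, that is, to an element of $\Omega_0$. Since $E(\theta)_J$ is a second simple object with the same highest weight, the lemma immediately preceding the theorem gives $M\cong E(\theta)_J$, proving surjectivity.

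Injectivity is then immediate from \cite[Proposition~2.8]{CD3}: the pair $(\theta,J)$ is recovered from the isomorphism class of $E(\theta)_J$, and since $w_J$ determines $J$ (its support is exactly $J$) distinct elements of $\Omega_0$ yield non-isomorphic simple objects. Combining the three points gives the claimed bijection. I do not anticipate any genuine obstacle here: the substantive input is Proposition~\ref{highestweightmodule}, which confines highest weights to $\Omega_0$, together with the rigidity lemma, which shows that a simple object is determined up to isomorphism by its highest weight, so what remains is purely organizational. The only step deserving explicit mention is the existence of a highest weight vector in a simple object, which rests entirely on the finiteness of $\op{Wt}(M)$ guaranteed by~($\star$).
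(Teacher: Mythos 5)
Your proposal is correct and follows essentially the same route as the paper: every simple object is a highest weight module with highest weight in $\Omega_0$ by Proposition \ref{highestweightmodule}, the lemma preceding the theorem identifies it with $E(\theta)_J$, and injectivity is \cite[Proposition 2.8]{CD3}. No meaningful differences from the paper's own argument.
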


According to Example \ref{Example} (b), it is easy to see that two simple objects $M$ and $N$ are isomorphic if and only if $\op{ch} M= \op{ch} N$.

\section{Highest weight category}

In this section, we show that $\mathscr{X}(\bf G)$ is a highest weight category.  Firstly we recall the definition of highest weight category (see \cite{CPS}).

\begin{definition}\label{HWC} \normalfont

Let $\mathscr{C}$ be a locally artinian, abelian, $\Bbbk$-linear category with enough injective objects that satisfies Grothendieck's condition. Then we call $\mathscr{C}$ a highest weight category if there exists a locally finite poset $\Lambda$ (the ``weights" of $\mathscr{C}$), such that:

\noindent (a) There is a complete collection $\{S(\lambda)_{\lambda \in \Lambda}\}$ of non-isomorphic simple objects of $\mathscr{C}$ indexed by the set $\Lambda$.

\noindent  (b) There is a collection $\{A(\lambda)_{\lambda \in \Lambda}\}$ of objects of $\mathscr{C}$ and, for each $\lambda$, an embedding $S(\lambda)\subset A(\lambda) $ such that all composition factors $S(\mu)$ of $A(\lambda)/S(\lambda)$ satisfy $\mu < \lambda$. For $\lambda,\mu \in \Lambda$,
we have that $\dim_{\Bbbk}\op{Hom}_{\mathscr{C}}(A(\lambda), A(\mu))$ and $[A(\lambda): S(\mu)]$ are finite.

\noindent  (c) Each simple object $S(\lambda)$ has an injective envelope $\mathcal{I}(\lambda)$ in $\mathscr{C}$.
Also, $\mathcal{I}(\lambda)$  has a good filtration $0= F_0(\lambda)\subset F_1(\lambda)\subset \dots $ such that:

\noindent (i) $F_1(\lambda)\cong A(\lambda)$;

\noindent  (ii) for $n>1$, $F_n(\lambda)/F_{n-1}(\lambda) \cong A(\mu)$ for some $\mu=\mu(n)> \lambda$;

\noindent  (iii) for a given $\mu \in \Lambda$, $\mu=\mu(n)$ for only finitely many $n$;

\noindent  (iv) $\bigcup F_i(\lambda)= \mathcal{I}(\lambda)$.
\end{definition}

The main  difficulty in proving that $\mathscr{X}(\bf G)$ is a highest weight category  is  to prove that the category $\mathscr{X}({\bf G})$ has enough injective objects. In the following we will show that  $\nabla(\theta)_J$ is an injective object in  category $\mathscr{X}(\bf G)$. For  $J\subset I(\theta)$, the simple object $E(\theta)_J$ is the socle of  $\nabla(\theta)_J$ by Proposition \ref{nablaProp}. Therefore  $\mathscr{X}(\bf G)$ has enough injective objects.

\begin{proposition}  \label{zeroExt1}
 Let $N\in \mathscr{X}(\bf G)$ such that the weights of $N$ are not less than or equal to  $\{\theta, w_J\} \in \Omega_0$.  Then $\op{Ext}^1_{\mathscr{X}(\bf G)}(E(\theta)_J,N)=0$.
\end{proposition}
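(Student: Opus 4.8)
The plan is to prove the vanishing by showing directly that every short exact sequence
$$0 \longrightarrow N \longrightarrow M \stackrel{\pi}{\longrightarrow} E(\theta)_J \longrightarrow 0$$
in $\mathscr{X}({\bf G})$ splits, which is equivalent to $\op{Ext}^1_{\mathscr{X}({\bf G})}(E(\theta)_J, N)=0$. Since $\{\theta, w_J\}\preceq\{\theta, w_J\}$, the hypothesis forces $\{\theta, w_J\}\notin \op{Wt}(N)$; hence, by the additivity of characters along short exact sequences, $M_{\{\theta, w_J\}}$ is one-dimensional and $\pi$ restricts to an isomorphism from $M_{\{\theta, w_J\}}$ onto $(E(\theta)_J)_{\{\theta, w_J\}}$. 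I would then pick a vector $\xi\in M$ of weight $\{\theta, w_J\}$ and rescale it so that $\pi(\xi)=C(\theta)_J$: this is legitimate because $E(\theta)_J$ has no weight strictly above its highest weight, so $\pi(\xi)$ carries no lower-order correction and lands exactly on the one-dimensional space $\Bbbk C(\theta)_J$.

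The heart of the argument is to show that $\xi$ generates a copy of $E(\theta)_J$. For each $\alpha\in J\subset I(\theta)$ the character $\theta$ is trivial on ${\bf T}_\alpha$, so $\xi$ is ${\bf T}_\alpha$-invariant and, by Proposition \ref{SL2module}, lies in the ${\bf G}_\alpha$-module $M^\alpha$; by Proposition \ref{SL2result} the relevant summands of $\Bbbk{\bf G}_\alpha\xi$ are among $\Bbbk_{\op{tr}}$, $\op{St}$ and $\mathbb{M}(\op{tr})$. As $\xi$ is not fixed by ${\bf U}_\alpha$, the module $\Bbbk{\bf G}_\alpha\xi$ is built from $\op{St}$ and $\mathbb{M}(\op{tr})$, and $\pi$ yields a surjection $\Bbbk{\bf G}_\alpha\xi\twoheadrightarrow \Bbbk{\bf G}_\alpha C(\theta)_J\cong \op{St}$. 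The crucial rigidity is that for $SL_2(\bar{\mathbb{F}}_q)$ the indecomposable $\mathbb{M}(\op{tr})$ has $\op{St}$ as socle and $\Bbbk_{\op{tr}}$ as head, so it admits no quotient isomorphic to $\op{St}$; this forces $\Bbbk{\bf G}_\alpha\xi\cong\op{St}$, and in particular $\dot{s}_\alpha\xi=-\xi$ for every $\alpha\in J$. With these Steinberg relations available, the mechanism in the proof of Proposition \ref{highestweightmodule} applies to $\Bbbk{\bf G}\xi$ using only $J\subset I(\theta)$ and $\dot{s}_\alpha\xi=-\xi$: one obtains $\Bbbk{\bf G}\xi=\Bbbk{\bf U}X_J\xi$, so $\Bbbk{\bf G}\xi$ is a highest weight module whose weights are exactly $\{\theta^v, w_Jv^{-1}\}$ with $v\in X_J$, all of which are $\preceq\{\theta, w_J\}$ since $\{\theta, w_J\}$ is its unique highest weight.

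Finally I would split the sequence by a weight comparison. The restriction $\pi|_{\Bbbk{\bf G}\xi}$ maps onto $E(\theta)_J$, because $E(\theta)_J$ is simple and $\pi(\xi)=C(\theta)_J\neq 0$, with kernel $\Bbbk{\bf G}\xi\cap N$. Every weight of this intersection lies in $\op{Wt}(N)$, hence is not $\preceq\{\theta, w_J\}$, whereas every weight of $\Bbbk{\bf G}\xi$ is $\preceq\{\theta, w_J\}$ by the previous step; thus $\Bbbk{\bf G}\xi\cap N$ has no ${\bf T}$-eigenvectors, and Lemma \ref{5.3} gives $\Bbbk{\bf G}\xi\cap N=0$. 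Therefore $\pi|_{\Bbbk{\bf G}\xi}\colon \Bbbk{\bf G}\xi\to E(\theta)_J$ is an isomorphism, providing a section of $\pi$ and splitting the extension. I expect the main obstacle to be the middle step, namely proving $\Bbbk{\bf G}_\alpha\xi\cong\op{St}$ for all $\alpha\in J$ so that $\xi$ is a genuine highest weight vector generating only $E(\theta)_J$ rather than some larger module reaching above $\{\theta, w_J\}$; this hinges on the $SL_2$-classification of Proposition \ref{SL2result}, on the non-existence of a surjection $\mathbb{M}(\op{tr})\twoheadrightarrow\op{St}$, and on verifying that the argument of Proposition \ref{highestweightmodule} truly requires nothing beyond the relations $\dot{s}_\alpha\xi=-\xi$ and the inclusion $J\subset I(\theta)$.
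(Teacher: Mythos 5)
Your proposal follows the same route as the paper's proof: choose a ${\bf T}$-eigenvector $\xi$ of weight $\{\theta,w_J\}$ in the middle term, show that $\Bbbk{\bf G}\xi$ is a highest weight module all of whose weights are $\preceq\{\theta,w_J\}$, deduce $\Bbbk{\bf G}\xi\cap N=0$, and thereby split the sequence. (The paper first reduces to simple $N$, then simply notes that $\{\theta,w_J\}$ is a highest weight of the extension $L$ and cites Proposition \ref{highestweightmodule}; you keep $N$ general and finish with Lemma \ref{5.3}. These differences are cosmetic.)

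The genuine problem is in the step you yourself flag as the crux, namely the claim that $\Bbbk{\bf G}_\alpha\xi\cong\op{St}$ for every $\alpha\in J$. Your justification --- $\Bbbk{\bf G}_\alpha\xi$ is built from summands $\op{St}$ and $\mathbb{M}(\op{tr})$, it surjects onto $\op{St}$, and $\mathbb{M}(\op{tr})$ admits no quotient isomorphic to $\op{St}$ --- only shows that $\op{St}$ occurs as a \emph{direct summand} of $\Bbbk{\bf G}_\alpha\xi$; it does not exclude $\Bbbk{\bf G}_\alpha\xi\cong\op{St}\oplus\mathbb{M}(\op{tr})$. That module is cyclic on a single trivial ${\bf T}_\alpha$-eigenvector (take $e+v$ with $e$ spanning $\op{St}^{{\bf T}_\alpha}$ and $v$ a generator of $\mathbb{M}(\op{tr})$: its image generates the head $\op{St}\oplus\Bbbk_{\op{tr}}$, hence it generates the whole module), it is not fixed by ${\bf U}_\alpha$, and it surjects onto $\op{St}$ --- so it passes every test you impose; yet for such a $\xi$ one has $\dot{s}_\alpha\xi\neq-\xi$, and the subsequent reduction $\Bbbk{\bf G}\xi=\Bbbk{\bf U}X_J\xi$ collapses. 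The paper excludes this possibility by a different mechanism: it uses that $\{\theta,w_J\}$ is a \emph{maximal} weight of $L$, so an $\mathbb{M}(\op{tr})$-type component would force $\dot{s}_\alpha\xi$ to produce a ${\bf T}$-eigenvector of weight strictly above $\{\theta,w_J\}$ in $\op{Wt}(L)$, which is impossible; this is exactly the argument inside the proof of Proposition \ref{highestweightmodule}. To repair your proof you need to invoke this maximality (or some substitute for it), not merely the non-existence of a surjection $\mathbb{M}(\op{tr})\twoheadrightarrow\op{St}$.
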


\begin{proof}  It is enough to show that
$\text{Ext}^1_{\mathscr{X}(\bf G)}(E(\theta)_J,N)=0$ for any simple object $N \in \mathscr{X}(\bf G)$ whose weights are not less than or equal to  $\{\theta, w_J\} \in \Omega_0$.
Let
\begin{align} \label{SES1}
0 \longrightarrow N   \longrightarrow L \longrightarrow E(\theta)_J \longrightarrow 0
\end{align}
be a short exact sequence in  $\mathscr{X}(\bf G)$.  Since $\text{ch} L=\text{ch} E(\theta)_J+ \text{ch} N$ and the highest weight of $E(\theta)_J$ is $\{\theta, w_J\}$, we let $\xi \in L$ be a vector of weight $\{\theta, w_J\}$. Note that  $\{\theta, w_J\}$ is also a highest weight of $L$.  The submodule $\Bbbk {\bf G}\xi$ of $L$ has weights  less than
or equal to $\{\theta, w_J\}$ by Proposition \ref{highestweightmodule}. Thus  $\Bbbk {\bf G}\xi\cong E(\theta)_J$. So the short exact sequence (\ref{SES1}) is splitting and thus $\text{Ext}^1_{\mathscr{X}(\bf G)}( E(\theta)_J,N)=0$.
\end{proof}

\begin{lemma} \label{zeroExt2}
Let $\lambda, \mu \in\widehat{{\bf T}}$ be two different characters.  Then $$\op{Ext}^1_{\mathscr{X}(\bf G)}(E(\theta)_J, E(\lambda)_K)=0,$$
where $J\subset I(\theta)$ and $K\subset I(\lambda)$. In particular, for any  $M, N$  in $\mathscr{X}(\bf G)$ such that $\op{Ev}(M) \cap \op{Ev}(N)=\emptyset$, we have $\op{Ext}^1_{\mathscr{X}(\bf G)}(M,N)=0$.
\end{lemma}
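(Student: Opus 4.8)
The plan is to prove the vanishing for the two simple objects first and then obtain the general ``disjoint eigenvalue'' statement by dévissage. So fix a short exact sequence
$$0 \longrightarrow E(\lambda)_K \longrightarrow L \stackrel{\pi}{\longrightarrow} E(\theta)_J \longrightarrow 0$$
in $\mathscr{X}({\bf G})$ with $\theta\neq\lambda$, and try to split it. First I would pin down the relevant maximal weight. By additivity of the character one has $\op{Wt}(L)=\op{Wt}(E(\theta)_J)\cup\op{Wt}(E(\lambda)_K)$, so the only two candidates for maximal weights of $L$ are the highest weights $\{\theta,w_J\}$ and $\{\lambda,w_K\}$ of the two factors. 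The key combinatorial point is that $\{\theta,w_J\}$ is \emph{automatically} a maximal weight of $L$ once $\theta\neq\lambda$: if it were not, some weight of $E(\lambda)_K$ would exceed it, and hence $\{\theta,w_J\}\prec\{\lambda,w_K\}$ by transitivity; this would give $y\in W$ with $\theta=\lambda^{y}$ and $y^{-1}(\Phi^+_{w_J})\subsetneq\Phi^+_{w_K}$. Since $\Phi^+_{w_J}=\Phi^+\setminus\Phi_J$, the inclusion $y^{-1}(\Phi^+_{w_J})\subseteq\Phi^+$ forces $y\in W_J\subseteq W_{I(\theta)}=\op{Stab}_W(\theta)$, whence $\lambda=\theta^{y^{-1}}=\theta$, a contradiction.

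Next I would produce a highest weight vector. Using the good-basis extension behind Proposition~\ref{constrgoodbasis} and the character additivity, choose a good basis of $L^{\bf T}$ refining a good basis of $E(\lambda)_K$ whose complementary part projects onto a good basis of $E(\theta)_J$; among these there is a genuine weight vector $\xi$ of weight exactly $\{\theta,w_J\}$ with $\pi(\xi)$ a nonzero multiple of the highest weight vector $C(\theta)_J$. Because $\{\theta,w_J\}$ is maximal in $L$ and $\xi$ generates $\Bbbk{\bf G}\xi$, the submodule $\Bbbk{\bf G}\xi$ is a highest weight module with highest weight $\{\theta,w_J\}$ in the sense of Proposition~\ref{highestweightmodule}. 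The crucial assertion, call it $(\sharp)$, is that \emph{every composition factor of such a highest weight module has the form $E(\theta)_P$ with $P\subseteq I(\theta)$}, i.e.\ carries the same character $\theta$. To establish $(\sharp)$ I would invoke the proof of Proposition~\ref{highestweightmodule}, which exhibits $\xi$ as fixed by the unipotent radical of ${\bf P}_J$ and as a Steinberg vector for the Levi ${\bf L}_J$ twisted by $\theta$; hence $\Bbbk{\bf G}\xi$ is a quotient of $\op{Ind}^{\bf G}_{{\bf P}_J}(\op{St}_{{\bf L}_J}\otimes\theta)$, which by induction in stages is a subquotient of $\mathbb{M}(\theta)=\op{Ind}^{\bf G}_{\bf B}\Bbbk_\theta$, all of whose composition factors are $E(\theta)_P$ by \cite[Theorem 3.1]{CD3}.

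Granting $(\sharp)$, the splitting is immediate: $\Bbbk{\bf G}\xi\cap E(\lambda)_K$ is a submodule of the simple object $E(\lambda)_K$, so it is $0$ or $E(\lambda)_K$; it cannot be $E(\lambda)_K$, since $\lambda\neq\theta$ means $E(\lambda)_K$ is not a composition factor of $\Bbbk{\bf G}\xi$. Therefore $\Bbbk{\bf G}\xi\cap E(\lambda)_K=0$, and $\pi$ restricts to an isomorphism $\Bbbk{\bf G}\xi\xrightarrow{\sim}E(\theta)_J$, splitting the sequence. For the ``in particular'' statement I would then argue by dévissage on the finite-length objects $M,N$ (Corollary~\ref{finitelength}): the Yoneda long exact sequences in each variable reduce $\op{Ext}^1_{\mathscr{X}(\bf G)}(M,N)$ to the $\op{Ext}^1$ groups between the composition factors $E(\theta_i)_{J_i}$ of $M$ and $E(\lambda_j)_{K_j}$ of $N$; since $\theta_i\in\op{Ev}(M)$ and $\lambda_j\in\op{Ev}(N)$ lie in disjoint sets we have $\theta_i\neq\lambda_j$, so each such group vanishes by the first part, and hence so does $\op{Ext}^1_{\mathscr{X}(\bf G)}(M,N)$.

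I expect the genuine obstacle to be $(\sharp)$ rather than the combinatorics of the order. Concretely, the delicate points are to justify that a highest weight module with highest weight $\{\theta,w_J\}$ really is a quotient of $\op{Ind}^{\bf G}_{{\bf P}_J}(\op{St}_{{\bf L}_J}\otimes\theta)$ (this is where the precise $\bf P_J$-structure of $\xi$ from the proof of Proposition~\ref{highestweightmodule} is needed), and that the relevant inductions and restrictions are exact enough to transport the composition-factor computation of $\mathbb{M}(\theta)$ down to $\Bbbk{\bf G}\xi$; the maximality step and the choice of the weight-$\{\theta,w_J\}$ lift $\xi$ as a bona fide good-basis vector are comparatively routine bookkeeping.
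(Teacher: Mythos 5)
Your overall strategy is the same as the paper's: exhibit a weight vector $\xi$ of weight $\{\theta,w_J\}$ in the middle term, observe that $\Bbbk{\bf G}\xi$ is a highest weight module, and show it cannot contain $E(\lambda)_K$, so the sequence splits; your maximality check for $\{\theta,w_J\}$ and the closing d\'evissage are correct (and the maximality verification is actually more careful than what the paper writes). Where you diverge is the key step. You route through the claim $(\sharp)$ that every composition factor of the highest weight module is some $E(\theta)_P$, obtained by presenting $\Bbbk{\bf G}\xi$ as a quotient of $\op{Ind}^{\bf G}_{{\bf P}_J}(\op{St}_{{\bf L}_J}\otimes\theta)\cong\mathbb{M}(\theta)_J$. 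That claim is true, but as you yourself flag it is the expensive part: it amounts to the universal property of $\mathbb{M}(\theta)_J$ (essentially Proposition \ref{enough projectives}, which appears only later and is imported from \cite{D1}), and filling it in requires showing that $\Bbbk{\bf P}_J\xi\cong\op{St}_{{\bf L}_J}\otimes\theta$ --- normality of $R_u({\bf P}_J)$ handles the unipotent radical, but one must still identify $\Bbbk{\bf L}_J\xi$ with the irreducible Steinberg module (say via the relations $\dot{s}_\alpha\xi=-\xi$, freeness of $\Bbbk{\bf U}_{\Phi_J^+}\xi$, and the Xi--Yang irreducibility). The paper bypasses all of this: the \emph{statement} of Proposition \ref{highestweightmodule} already lists every weight of the highest weight module as $\{\theta^{v}, w_Jv^{-1}\}$ with $v\in X_J$; since $\{\lambda,w_K\}$ must appear in this list, one gets $\lambda=\theta^{v}$ and $\Phi^+_{w_Jv^{-1}}=\Phi^+_{w_K}$, hence $w_Jv^{-1}=w_K$ and $v=w_Kw_J$, and then $J\subset I(\theta)$, $K\subset I(\lambda)$ give $\lambda=\theta^{w_Kw_J}=\theta^{w_K}$ together with $\lambda^{w_K}=\lambda$, forcing $\lambda=\theta$. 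So your argument proves something stronger than needed at the cost of a nontrivial auxiliary identification, whereas the paper's weight bookkeeping closes the proof with tools already established; if you keep your route, the identification $\Bbbk{\bf P}_J\xi\cong\op{St}_{{\bf L}_J}\otimes\theta$ must be written out, since it is not available off the shelf at this point in the development.
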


\begin{proof} Suppose $\op{Ext}^1_{\mathscr{X}(\bf G)}(E(\theta)_J, E(\lambda)_K)\ne 0$ and let
\begin{align} \label{SES2}
0 \longrightarrow   E(\lambda)_K    \longrightarrow M  \longrightarrow E(\theta)_J  \longrightarrow 0
\end{align}
be a non-splitting short exact sequence. Let $\xi$ be a weight vector of $M$ with weight $\{\theta, w_J\}$.
Thus   $M= \Bbbk {\bf G}\xi$ is a highest weight module of weight  $\{\theta, w_J\}$.
 Then $\{\lambda, w_K\} = \{\theta^v, w_Jv^{-1}\}$ for some $v\in X_J$ by Proposition \ref{highestweightmodule}.
However noting that $v\in W_K$ and $K \subset I(\lambda)$, then we get $\lambda=\theta$, which is a contradiction.
\end{proof}

\begin{lemma}  \label{zeroExt3}
For $\{\theta, w_J\} \in \Omega_0$,  we have $\op{Ext}^1_{\mathscr{X}(\bf G)}(E(\theta)_J,E(\theta)_J)=0$.
\end{lemma}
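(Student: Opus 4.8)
The plan is to argue by contradiction and let the multiplicity-one rigidity of highest weight modules force a splitting. Observe first that Proposition \ref{zeroExt1} cannot be invoked here: the module $E(\theta)_J$ being inserted has highest weight $\{\theta, w_J\}$, which is itself $\preceq \{\theta, w_J\}$, so self-extensions fall outside its scope and genuinely need separate treatment. Suppose therefore that $\op{Ext}^1_{\mathscr{X}(\bf G)}(E(\theta)_J,E(\theta)_J)\ne 0$ and fix a non-split short exact sequence
\[
0 \longrightarrow E(\theta)_J \overset{\iota}{\longrightarrow} M \overset{\pi}{\longrightarrow} E(\theta)_J \longrightarrow 0
\]
in $\mathscr{X}(\bf G)$. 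By the additivity of $\op{ch}$ along short exact sequences established above, $\op{ch} M = 2\,\op{ch} E(\theta)_J$; since every weight of $E(\theta)_J$ satisfies $\{\theta^v, w_Jv^{-1}\}\preceq\{\theta, w_J\}$ for $v\in Z_J(\theta)$ (Example \ref{Example}(b) and Proposition \ref{highestweightmodule}), the pairing $\{\theta, w_J\}$ is a highest weight of $M$, now of multiplicity two.

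Next I would exhibit a highest weight vector of $M$ that survives in the quotient. Extending a good basis of $\iota(E(\theta)_J)$ to a good basis of $M$ by Proposition \ref{constrgoodbasis}, the complementary basis vectors project to a good basis of the quotient $E(\theta)_J$; in particular there is a vector $\xi\in M$ of weight $\{\theta, w_J\}$ with $\pi(\xi)$ equal to the highest weight vector of the quotient, so $\pi(\xi)\ne 0$. Because $\{\theta, w_J\}$ is maximal in $\op{Wt}(M)$, the vector $\xi$ is a highest weight vector, and Proposition \ref{highestweightmodule} applies to the highest weight module $\Bbbk{\bf G}\xi$: each of its weights, in particular $\{\theta, w_J\}$, occurs with multiplicity one.

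The splitting now follows. The restriction $\pi|_{\Bbbk{\bf G}\xi}\colon \Bbbk{\bf G}\xi \to E(\theta)_J$ is nonzero, hence surjective onto the simple target. Its kernel $\Bbbk{\bf G}\xi \cap \iota(E(\theta)_J)$ is a submodule of the simple module $\iota(E(\theta)_J)$, so it is either $0$ or all of $\iota(E(\theta)_J)$. If it were all of $\iota(E(\theta)_J)$, then applying additivity of $\op{ch}$ to the sequence $0 \to \iota(E(\theta)_J) \to \Bbbk{\bf G}\xi \to E(\theta)_J \to 0$ would give $\op{ch}\,\Bbbk{\bf G}\xi = 2\,\op{ch} E(\theta)_J$, so $\{\theta, w_J\}$ would occur in $\Bbbk{\bf G}\xi$ with multiplicity two, contradicting the multiplicity-one property just noted. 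Hence the kernel is $0$, $\pi|_{\Bbbk{\bf G}\xi}$ is an isomorphism onto $E(\theta)_J$, and it splits $\pi$ — contradicting the non-split choice, which proves the lemma. The only delicate point is organizing the bookkeeping so that $\{\theta, w_J\}$ is seen to occur in $M$ with multiplicity exactly two and to admit a highest weight vector mapping nontrivially to the quotient; once the exactness of $M\mapsto M^{\bf T}$ is in hand, the rigidity of highest weight modules from Proposition \ref{highestweightmodule} does the rest.
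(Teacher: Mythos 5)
Your proof is correct and follows essentially the same route as the paper's: both arguments rest on Proposition \ref{highestweightmodule} to see that the cyclic submodule generated by a weight vector of weight $\{\theta, w_J\}$ is a highest weight module in which that weight has multiplicity one, hence a copy of $E(\theta)_J$ that splits the sequence. The paper phrases this with two vectors $\xi,\eta$ whose projections to $M_{\{\theta,w_J\}}$ are linearly independent, whereas you use a single weight vector mapping nontrivially to the quotient and check that $\pi$ restricts to an isomorphism on $\Bbbk{\bf G}\xi$; the difference is cosmetic.
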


\begin{proof}
We let
\begin{align} \label{SES3}
0 \longrightarrow  E(\theta)_J   \longrightarrow M \longrightarrow E(\theta)_J  \longrightarrow 0
\end{align}
be a  short exact sequence. Then $\dim M_{\{\theta, w_J\}} =2$. Let $\xi, \eta \in M^{\bf T}$ such that $p_{\theta, w_J}(\xi), p_{\theta, w_J}(\eta)$ are linearly independent.
Using Proposition \ref{highestweightmodule},    we see that $\Bbbk {\bf G} \xi $ and  $\Bbbk {\bf G} \eta$ are highest weight modules and thus they are  both isomorphic to $E(\theta)_J$. So the short exact sequence (\ref{SES3}) is splitting and thus the lemma is proved.
\end{proof}

\begin{theorem} \label{injectivemodule}
For $J\subset I(\theta)$, $\nabla(\theta)_J$ is an injective object in  the category $\mathscr{X}(\bf G)$.
\end{theorem}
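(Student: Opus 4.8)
The plan is to establish injectivity of $\nabla(\theta)_J$ by proving the vanishing of $\op{Ext}^1_{\mathscr{X}(\bf G)}(-,\nabla(\theta)_J)$. Because every object of $\mathscr{X}({\bf G})$ has finite length (Corollary \ref{finitelength}), it is enough to show that $\op{Ext}^1_{\mathscr{X}(\bf G)}(E(\lambda)_K,\nabla(\theta)_J)=0$ for each simple object $E(\lambda)_K$; the vanishing against an arbitrary object then follows by induction on composition length, using the contravariant six-term exact sequence in the first variable. So I fix a short exact sequence
$$0 \longrightarrow \nabla(\theta)_J \longrightarrow M \longrightarrow E(\lambda)_K \longrightarrow 0$$
in $\mathscr{X}({\bf G})$ and try to split it, splitting the analysis according to whether $\lambda=\theta$.

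First I would dispose of the case $\lambda\ne\theta$. Every composition factor of $\nabla(\theta)_J$ is some $E(\theta)_{K'}$ with $K'\subseteq J$ (Proposition \ref{nablaProp}(2)), and all of these carry the character $\theta$. Hence, applying Lemma \ref{zeroExt2} with the two characters interchanged, $\op{Ext}^1_{\mathscr{X}(\bf G)}(E(\lambda)_K,E(\theta)_{K'})=0$ for every factor, and a filtration argument along a composition series of $\nabla(\theta)_J$ (the two outer terms of the covariant six-term sequence in the second variable vanishing at each step) gives $\op{Ext}^1_{\mathscr{X}(\bf G)}(E(\lambda)_K,\nabla(\theta)_J)=0$.

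There remains the case $\lambda=\theta$, which is the heart of the matter. I would first record that $\nabla(\theta)_J$ is itself a highest weight module: its generator ${\bf 1}_{\theta,J'}$ is fixed by every root subgroup ${\bf U}_\gamma$ with $\gamma\in\Phi^+$, so its weight is $\{\theta,w_\emptyset\}$, which is $\prec$-maximal, and Proposition \ref{highestweightmodule} then shows that all weights of $\nabla(\theta)_J$ occur with multiplicity one. When $K\not\subseteq J$, I expect that no weight of $\nabla(\theta)_J$ is $\preceq\{\theta,w_K\}$; this is a purely combinatorial comparison of the explicit weight list in Example \ref{Example}(c) with the order $\prec$, and once it is checked Proposition \ref{zeroExt1} applies directly and forces the Ext group to vanish.

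The genuine obstacle is the diagonal case $\lambda=\theta$ with $K\subseteq J$. Here $E(\theta)_K$ is an honest composition factor of $\nabla(\theta)_J$, the weight $\{\theta,w_K\}$ occurs in $M$ with multiplicity two, and the socle weight satisfies $\{\theta,w_J\}\preceq\{\theta,w_K\}$, so Proposition \ref{zeroExt1} is no longer available; in particular a weight vector of $M$ lying over the head of the quotient $E(\theta)_K$ is not a highest weight vector, and the highest-weight-generation trick used in Lemmas \ref{zeroExt2} and \ref{zeroExt3} breaks down. To handle this I would pass to the finite groups $G_{q^a}$ as in Lemmas \ref{5.2} and \ref{5.3}: for $a$ large the approximation $(\nabla(\theta)_J)_{q^a}$ is isomorphic to the finite induced module $\op{Ind}^{G_{q^a}}_{P_{J',q^a}}\Bbbk_\theta$, which is injective because $\Bbbk G_{q^a}$ is semisimple in characteristic zero, so the restricted sequence $0\to(\nabla(\theta)_J)_{q^a}\to M_{q^a}\to (E(\theta)_K)_{q^a}\to 0$ splits. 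The decisive and hardest step is to promote these compatible finite-level splittings to a genuine $\Bbbk{\bf G}$-splitting of the original sequence; I would carry this out through the good-basis and ${\bf T}$-weight bookkeeping already developed in Lemma \ref{5.3}, showing that the ${\bf T}$-fixed obstruction to a global section must vanish. Once $\nabla(\theta)_J$ is known to be injective, the fact that $E(\theta)_J$ is its socle (Proposition \ref{nablaProp}) immediately yields that $\mathscr{X}({\bf G})$ has enough injective objects.
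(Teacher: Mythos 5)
Your reductions match the paper's: it suffices to kill $\op{Ext}^1_{\mathscr{X}({\bf G})}(E(\lambda)_K,\nabla(\theta)_J)$ for simple objects, the case $\lambda\ne\theta$ follows from Lemma \ref{zeroExt2} applied along a composition series, and the case $\lambda=\theta$ with $K\not\subseteq J$ follows from Proposition \ref{zeroExt1} after the weight comparison. The gap is in the only case that carries real content, $\lambda=\theta$ with $K\subseteq J$: there you do not give an argument but a plan (``pass to $G_{q^a}$, split there by Maschke, promote the splittings''), and that plan cannot succeed in the form you state it. Every short exact sequence in $\mathscr{X}({\bf G})$ splits at every finite level, since $\Bbbk G_{q^a}$ is semisimple in characteristic zero; yet $\mathscr{X}({\bf G})$ is far from semisimple --- $\mathbb{M}(\theta)$ is indecomposable with more than one composition factor whenever $I(\theta)\ne\emptyset$. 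So finite-level splittings by themselves carry no information about global splitting, and the entire burden of proof sits on the unperformed ``promotion'' step. Nothing in your sketch identifies what is special about $\nabla(\theta)_J$, as opposed to a non-split extension inside $\mathbb{M}(\theta)$, that would make the promotion go through.

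For comparison, the paper's argument in this case avoids the finite groups entirely. It first replaces $\nabla(\theta)_J$ by its submodule $M$ whose composition factors are the $E(\theta)_X$ with $K\subseteq X\subseteq J$ (the quotient $N$ is disposed of by weight considerations). Given an extension $0\to M\to M'\to E(\theta)_K\to 0$, the weight $\{\theta,w_K\}$ has multiplicity two in $M'$; choosing weight vectors $\xi,\eta$ with $\Bbbk{\bf G}\xi\cong M$, the successive-weight property of highest weight modules (Proposition \ref{highestweightmodule} and Remark \ref{successiveproperty}) together with the good-basis expansion of Proposition \ref{expressionofbasis} forces $\Bbbk{\bf G}\eta$ to be isomorphic either to $E(\theta)_K$, which splits the sequence at once, or to $M$; in the latter case a suitable combination $f\xi+g\eta$ is shown to generate a copy of $E(\theta)_K$, again splitting the sequence. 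To salvage your approach you would have to supply exactly this sort of weight-multiplicity analysis to make the finite-level splittings compatible, at which point the detour through $G_{q^a}$ is doing no work.
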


\begin{proof} It is enough to show that $\text{Ext}^1_{\mathscr{X}(\bf G)}(\mathcal{S},\nabla(\theta)_J)=0$  for any simple object $\mathcal{S} \in \mathscr{X}(\bf G)$.
 Since $\nabla(\theta)_J$ has the composition factors $E(\theta)_L$ ( $L \subset J$) with multiplicity one, we just need to consider $\mathcal{S}=E(\theta)_K$
 for $K \subset J$ by Proposition \ref{zeroExt1} and Lemma \ref{zeroExt2}.
 Let $M$  be the  submodule of $\nabla(\theta)_J$  whose composition factors are $E(\theta)_X$ with $ K \subset X \subset J$.  Set $N= \nabla(\theta)_J/M$.
According to Lemma \ref{zeroExt2}, we get  $\text{Ext}^1_{\mathscr{X}(\bf G)}(E(\theta)_K, N)=0$. In the following we show that $\text{Ext}^1_{\mathscr{X}(\bf G)}(E(\theta)_K, M)=0$.

When $K=J$, we have $M= E(\theta)_K$ and thus $\text{Ext}^1_{\mathscr{X}(\bf G)}(E(\theta)_K, M)=0$ by Lemma \ref{zeroExt3}.
Now we assume that  $K \subsetneq J$ and let
\begin{align} \label{SES4}
0 \longrightarrow  M    \longrightarrow M' \longrightarrow E(\theta)_K  \longrightarrow 0
\end{align}
be a short exact sequence.  Note that $M$ is a highest weight module generated by a highest weight vector of weight  $\{\theta, w_K\}$. Since $\dim M'_{\{\theta, w_K\}}=2$,  we let $\xi, \eta \in (M')^{\bf T}$
such that $p_{\theta, w_J}(\xi), p_{\theta, w_J}(\eta)$ are linearly independent.
 Without loss of generality, we can assume that  $\Bbbk {\bf G} \xi\cong M$. Thus it is easy to see that $E(\theta)_K$ is  a simple quotient of the $\Bbbk {\bf G}$-modules $\Bbbk{\bf G} \eta$.
Let $B_1= \displaystyle \bigcup_{ K \subset X \subset J } \bigcup_{w\in Z_X(\theta)}\{ \dot{w}\xi \}$, which is a good basis of $M$. Set $B_2= \bigcup_{w\in Z_K(\theta)}\{ \dot{w}\eta \}$. Then we see that
$B_1 \cup B_2$ is a good basis of $M'$ by Proposition \ref{constrgoodbasis}. Now assume  that $\Bbbk {\bf G} \eta$ is not isomorphic to $E(\theta)_K$. We will show that $\Bbbk {\bf G} \eta$ is isomorphic to $M$. Suppose that $\Bbbk {\bf G} \eta$ is not isomorphic to $M$, then according to Proposition \ref{highestweightmodule} and Remark \ref{successiveproperty}, there exists a vector $\dot{v}\eta\in \Bbbk{\bf G} \eta$ of weight $\{\theta^{v}, w_Kv^{-1}\}$
such that $v\notin Z_K(\theta)$   and $\{\theta^{sv}, w_Kv^{-1}s\}$ is not a weight of $\Bbbk {\bf G} \eta$ anymore but still a weight of $M$. Noting that  $B_1 \cup B_2$ is a good basis of $M'$,  we have
\begin{align} \label{eq4}
\dot{v}\eta= \sum_{\dot{w}\xi\in B_1} a_w \dot{w}\xi +  \sum_{\dot{w}\eta\in B_2} b_w \dot{w}\eta.
\end{align}
Since $\displaystyle \dim (M')_{\{\theta^{sv}, w_Kv^{-1}s \}}=\dim M_{\{\theta^{sv}, w_Kv^{-1}s \}}=1$, we see that $a_v\ne 0$.
Moreover if $a_w\ne 0$, we get ${\bf U}_{w_Kw^{-1}} \subseteq  {\bf U}_{w_Kv^{-1}}$ by Proposition \ref{expressionofbasis}. Let $\dot{s}$ act  on both sides of (\ref{eq4}).  Since $\{\theta^{sv}, w_Kv^{-1}s\}$ is not a weight of $\Bbbk {\bf G} \eta$  but still  a weight of $\Bbbk {\bf G} \xi$, it is not difficult to see that $\dot{s}\dot{v}\xi$ is a linearly combination of elements in $(B_1 \setminus  \{\dot{s}\dot{v}\xi\}) \cup B_2 $, which is a contradiction.
Now we have proved that $\Bbbk {\bf G} \eta$ is isomorphic to $M$ when $\Bbbk {\bf G} \eta$ is not isomorphic to $E(\theta)_K$.

If $\Bbbk {\bf G} \eta$ is isomorphic to $E(\theta)_K$, then the short exact sequence (\ref{SES4})  is splitting  and the theorem is proved. Otherwise, $\Bbbk {\bf G} \eta$ is isomorphic to $M$ according to the previous discussion. For  $L$ with $K\subsetneq L \subseteq J$,  let $\dot{w}\xi$ and $\dot{w}\eta$ be the elements whose weights are $\{\theta, w_L\}$.  However noting that $\dim (M')_{\theta, w_L}=1$, there exists $f,g\in \Bbbk^*$ such that $\dot{w}(f\xi+g\eta)$ is not of weight $\{\theta, w_L\}$.  Set $\zeta=f\xi+g\eta$. Hence $\Bbbk {\bf G} \zeta$ is not  isomorphic to $M$. Thus we get $\Bbbk {\bf G} \zeta \cong E(\theta)_K$ and the theorem is proved.

\end{proof}

Now we can prove  that $\mathscr{X}({\bf G})$ is a highest weight category. In Definition \ref{HWC}, the set of weights is $ \Omega_0=\{(\theta ,w_J) \mid \theta\in \widehat{{\bf T}}, J\subset I(\theta)\}$.  The order  of  $\Omega_0$ is defined as follows:
$$(\theta_1, w_{J_1}) <  (\theta_2, w_{J_2} ), \ \text{if}\ \theta_1=\theta_2 \ \text{and} \ J_1\supsetneq J_2.$$
We let $S(\{\theta ,w_J\})= A(\{\theta ,w_J\})=E(\theta)_J$ and $\mathcal{I}(\{\theta ,w_J\})= \nabla(\theta)_J$.
With these settings,  it is not difficult to check that $\mathscr{X}({\bf G})$  satisfies  all the conditions in Definition \ref{HWC} and thus we get  the following theorem.

\begin{theorem} \label{XGisHWC}
The category $\mathscr{X}({\bf G})$  is a highest weight category.
\end{theorem}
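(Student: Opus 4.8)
The plan is to verify Definition \ref{HWC} clause by clause for the data fixed just before the statement: $S(\{\theta,w_J\})=A(\{\theta,w_J\})=E(\theta)_J$, $\mathcal{I}(\{\theta,w_J\})=\nabla(\theta)_J$, and the poset $\Omega_0$ with $(\theta_1,w_{J_1})<(\theta_2,w_{J_2})$ iff $\theta_1=\theta_2$ and $J_1\supsetneq J_2$. The decisive simplification, which I would point out at the outset, is that here $A(\lambda)=S(\lambda)$ is already simple, so $A(\lambda)/S(\lambda)=0$ and clause (b) carries no content beyond finiteness of $\op{Hom}$-spaces and composition multiplicities. Thus the entire weight of the verification falls on the foundational hypotheses and on clause (c).

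First I would record the structural hypotheses. The category is abelian and $\Bbbk$-linear by the results of Section 5, and by Corollary \ref{finitelength} every object has finite length, so $\mathscr{X}({\bf G})$ is artinian and noetherian, hence locally artinian; the remaining categorical axioms (Grothendieck's condition) I would note are inherited from the structure already established together with finite length. The poset $\Omega_0$ is locally finite: for a fixed $\theta$ the weights $\{\theta,w_K\}$ are indexed by the subsets $K\subset I(\theta)$ of the finite set $I(\theta)$, so every interval is finite, while weights attached to distinct characters are incomparable. Clause (a) is exactly the classification of simple objects in Section 5, which gives a complete, non-redundant list $\{E(\theta)_J\}$ indexed by $\Omega_0$. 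For enough injectives I would argue: by Theorem \ref{injectivemodule} each $\nabla(\theta)_J$ is injective, and by Proposition \ref{nablaProp}(3) it has simple socle $E(\theta)_J$, so $\nabla(\theta)_J$ is the injective envelope $\mathcal{I}(\{\theta,w_J\})$; since any $M$ has finite length its socle is a finite sum of simples, and the embedding of the socle into the corresponding finite direct sum of such envelopes extends, by injectivity, to an embedding of $M$ into an injective object. The finiteness conditions in (b) are then immediate: $\dim_{\Bbbk}\op{Hom}_{\mathscr{X}({\bf G})}(E(\theta)_J,E(\lambda)_K)$ is $1$ or $0$ since these simples have one-dimensional top weight spaces by Proposition \ref{highestweightmodule} (so $\op{End}(E(\theta)_J)=\Bbbk$ by Schur), and $[E(\theta)_J:E(\lambda)_K]$ is $1$ or $0$ by the classification.

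The substantive step is clause (c), the good filtration of each $\mathcal{I}(\lambda)=\nabla(\theta)_J$. Since every $A(\mu)$ is simple, a good filtration is nothing but a composition series, and Proposition \ref{nablaProp} supplies it: by part (2) the composition factors are exactly $E(\theta)_K$ for $K\subset J$, each of multiplicity one, so each weight $\mu$ occurs only finitely often, giving (iii); part (3) identifies the socle as $E(\theta)_J$, so I may take $F_1\cong A(\lambda)$, which is (i); and the order required in (ii) — each successive quotient $A(\mu)$ with $\mu>\lambda$, i.e.\ $K\subsetneq J$ — follows from part (4), which forces $E(\theta)_K$ to precede $E(\theta)_L$ whenever $J\supset K\supset L$, so the series can be arranged to start at the socle and climb through strictly smaller subsets $K\subsetneq J$, each corresponding to a weight strictly above $\lambda$. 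Finite length then yields $\bigcup F_i=\nabla(\theta)_J$, which is (iv).

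I expect the only genuine friction to be bookkeeping rather than a new idea: the hard analytic content, the injectivity of $\nabla(\theta)_J$, is already secured by Theorem \ref{injectivemodule}, and the classification of simples and the finite-length property are in hand. The points deserving a little care are that Proposition \ref{nablaProp}(4) indeed forces a composition series compatible with the order on $\Omega_0$ so that (ii) holds exactly (not merely up to reordering), and the confirmation that the ambient Grothendieck condition and local artinianness are satisfied in the precise sense of Definition \ref{HWC}; both are routine consequences of the earlier sections, and with them in place the theorem follows.
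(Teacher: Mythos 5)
Your proposal is correct and follows exactly the route the paper takes: the paper fixes the same data ($S(\{\theta,w_J\})=A(\{\theta,w_J\})=E(\theta)_J$, $\mathcal{I}(\{\theta,w_J\})=\nabla(\theta)_J$, the poset $\Omega_0$ with the same order) and then asserts that the axioms of Definition \ref{HWC} are readily checked from Corollary \ref{finitelength}, the classification of simples, Proposition \ref{nablaProp}, and Theorem \ref{injectivemodule}. You have simply written out the verification that the paper leaves to the reader, and your details (vacuity of clause (b) since $A(\lambda)=S(\lambda)$, the composition series of $\nabla(\theta)_J$ as the good filtration) are all sound.
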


Some discussions and results in \cite[Section 4]{D1} still hold in category $\mathscr{X}({\bf G})$. We list some interesting properties for the
completeness. One can refer to \cite[Section 4 and Section 5]{D1} for more details.

\begin{proposition}\label{enough projectives}
For $\theta\in \widehat{\bf T}$ and $J\subset I(\theta)$,  $\mathbb{M}(\theta)_J$ is the projective cover of $E(\theta)_J$.
\end{proposition}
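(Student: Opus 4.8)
The plan is to establish the two defining features of a projective cover separately: that $\mathbb{M}(\theta)_J$ has simple head $E(\theta)_J$ (so that the canonical surjection $\mathbb{M}(\theta)_J\twoheadrightarrow E(\theta)_J$ is essential), and that $\mathbb{M}(\theta)_J$ is projective in $\mathscr{X}({\bf G})$.

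First I would record the head. By Example~\ref{Example}(a) and Proposition~\ref{highestweightmodule}, $\mathbb{M}(\theta)_J$ is a highest weight module generated by the highest weight vector $\eta(\theta)_J$, and $\dim (\mathbb{M}(\theta)_J)_{\{\theta,w_J\}}=1$. If $N\subsetneq \mathbb{M}(\theta)_J$ is any proper submodule, then $N_{\{\theta,w_J\}}=0$, for otherwise $N$ would contain a nonzero scalar multiple of $\eta(\theta)_J$ and hence equal $\mathbb{M}(\theta)_J$. Consequently the sum of all proper submodules again has trivial $\{\theta,w_J\}$-component, so it is itself proper; thus $\mathbb{M}(\theta)_J$ has a unique maximal submodule. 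Since $E(\theta)_J=\mathbb{M}(\theta)_J/\mathbb{M}(\theta)_J'$ is simple, this maximal submodule is exactly $\mathbb{M}(\theta)_J'$, the head is $E(\theta)_J$, and $\mathbb{M}(\theta)_J$ is indecomposable.

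For projectivity, since every object of $\mathscr{X}({\bf G})$ has finite length by Corollary~\ref{finitelength}, it suffices to show $\operatorname{Ext}^1_{\mathscr{X}({\bf G})}(\mathbb{M}(\theta)_J,E(\lambda)_K)=0$ for every simple $E(\lambda)_K$. By Lemma~\ref{zeroExt2} the case $\operatorname{Ev}(\mathbb{M}(\theta)_J)\cap\operatorname{Ev}(E(\lambda)_K)=\emptyset$ is immediate, so I may assume $\lambda\in W\theta$. Given a short exact sequence $0\to E(\lambda)_K\to V\xrightarrow{\pi}\mathbb{M}(\theta)_J\to 0$, the plan is to lift $\eta(\theta)_J$ to a weight vector $\tilde\eta\in V$ of the same weight $\{\theta,w_J\}$. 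A ${\bf T}$-eigenvector lift exists because $M\mapsto M_\theta$ is exact (injectivity of $\Bbbk_\theta$, as in the additivity of characters along short exact sequences); the point is then to correct it so that it is genuinely fixed by ${\bf U}'_{w_J}$, which I would do by the $\underline{U_{q^a}}$-averaging technique of Lemma~\ref{keylemma} together with the good-basis bookkeeping of Propositions~\ref{constrgoodbasis} and~\ref{expressionofbasis}. Granting such a lift, $\Bbbk{\bf G}\tilde\eta$ is a highest weight module of highest weight $\{\theta,w_J\}$, so by Proposition~\ref{highestweightmodule} its weights lie in $\{\{\theta^v,w_Jv^{-1}\}\mid v\in X_J\}=\operatorname{Wt}(\mathbb{M}(\theta)_J)$, each with multiplicity at most one; and $\pi$ maps $\Bbbk{\bf G}\tilde\eta$ onto $\mathbb{M}(\theta)_J$ since its image contains the generator $\eta(\theta)_J$.

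The conclusion is then a character count. Because $\mathbb{M}(\theta)_J$ already attains every weight in $\operatorname{Wt}(\mathbb{M}(\theta)_J)$ with multiplicity exactly one (Example~\ref{Example}(a)), while $\Bbbk{\bf G}\tilde\eta$ surjects onto it yet has all multiplicities at most one and no weights outside $\operatorname{Wt}(\mathbb{M}(\theta)_J)$, the epimorphism $\pi\colon\Bbbk{\bf G}\tilde\eta\to\mathbb{M}(\theta)_J$ must be character-preserving; its kernel then has zero character, hence zero ${\bf T}$-fixed part, hence vanishes by Lemma~\ref{5.3}. Thus $\Bbbk{\bf G}\tilde\eta\cong\mathbb{M}(\theta)_J$ splits the sequence, giving $\operatorname{Ext}^1_{\mathscr{X}({\bf G})}(\mathbb{M}(\theta)_J,E(\lambda)_K)=0$ and projectivity; combined with the simple head, $\mathbb{M}(\theta)_J$ is the projective cover of $E(\theta)_J$. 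I expect the main obstacle to be exactly the lifting step: producing a lift of $\eta(\theta)_J$ that is a true highest weight vector rather than a mere ${\bf T}$-eigenvector, and ruling out the non-split configuration in which $E(\lambda)_K$ could contribute a weight $\succ\{\theta,w_J\}$. This is where the finite-field averaging arguments of Section~3 and the multiplicity-one structure of highest weight modules do the real work, and where the overlapping case $\lambda\in W\theta$ must be handled with care.
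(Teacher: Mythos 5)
The paper itself does not prove this proposition; it is stated as one of the results imported from \cite[Section 4]{D1}, so there is no internal proof to compare against. Judged on its own terms, your overall strategy is the natural one: show $\mathbb{M}(\theta)_J$ has simple head $E(\theta)_J$ (your first paragraph is correct -- a vector of weight $\{\theta,w_J\}$ in $\mathbb{M}(\theta)_J$ must be a nonzero multiple of $\eta(\theta)_J$ because no weight $\{\theta,Y\}$ with $Y\supsetneq\Phi^+_{w_J}$ occurs, by the length count $|\Phi^+_{w_Jw^{-1}}|=|\Phi^+|-\ell(w_J)-\ell(w)$), and then reduce projectivity to $\operatorname{Ext}^1_{\mathscr{X}({\bf G})}(\mathbb{M}(\theta)_J,E(\lambda)_K)=0$ via Corollary~\ref{finitelength}. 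The lifting step you worry about is actually the easy part: extend a good basis of $E(\lambda)_K$ to a good basis of $V$ (Proposition~\ref{constrgoodbasis} together with Proposition~\ref{quotient}); the complementary basis vectors map to a good basis of $\mathbb{M}(\theta)_J$ with the \emph{same} weights, so a genuine weight-$\{\theta,w_J\}$ lift $\tilde\eta$ of $\eta(\theta)_J$ exists with no averaging needed. Your character count in the case where $\{\theta,w_J\}$ is a maximal weight of $V$ is also sound.

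The genuine gap is the step ``granting such a lift, $\Bbbk{\bf G}\tilde\eta$ is a highest weight module of highest weight $\{\theta,w_J\}$.'' Proposition~\ref{highestweightmodule} applies only when the generator's weight is maximal among the weights of the module it generates, and this fails precisely in the cases that matter most. Take $\lambda=\theta$ and $K\subsetneq J$: then $\Phi^+_{w_J}\subsetneq\Phi^+_{w_K}$, so $\{\theta,w_J\}\prec\{\theta,w_K\}$, and $E(\theta)_K$ contributes to $V$ the weight $\{\theta,w_K\}$ lying strictly \emph{above} $\{\theta,w_J\}$. If $\Bbbk{\bf G}\tilde\eta$ meets $E(\theta)_K$ nontrivially (i.e.\ in the would-be non-split situation), then $\tilde\eta$ is not a highest weight vector of $\Bbbk{\bf G}\tilde\eta$, Proposition~\ref{highestweightmodule} gives you no control on its weights or multiplicities, and the character comparison collapses. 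You flag this configuration yourself but offer no argument to exclude it, and it is not a degenerate corner case: these $E(\theta)_K$ with $K\subsetneq J$ are exactly the composition factors of $\nabla(\theta)_J/E(\theta)_J$, so this is the heart of the computation, not a boundary check. To close the gap one needs an additional input, e.g.\ the representability $\operatorname{Hom}_{\mathscr{X}({\bf G})}(\mathbb{M}(\theta)_J,M)\cong M_{\{\theta,w_J\}}$ (identifying morphisms with vectors satisfying the defining relations of $\eta(\theta)_J$) combined with the additivity of $\dim M_{\{\theta,w_J\}}$ along short exact sequences, which makes $\operatorname{Hom}(\mathbb{M}(\theta)_J,-)$ exact directly; or an argument in the spirit of the proof of Theorem~\ref{injectivemodule}, where the interaction of two weight vectors of overlapping weight is analyzed by acting with $\dot{s}$ on a good-basis expansion. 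As written, the proposal is an outline whose decisive case is left open.
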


\begin{proposition}\label{Extension}

\noindent $(1)$  For $n\geq 0$, $\op{Ext}^n_{\mathscr{X}({\bf G})}(M,N)$ is finite-dimensional for all $M,N\in \mathscr{X}({\bf G})$.

\noindent $(2)$ If $\op{Ext}^n_{\mathscr{X}({\bf G})}(E(\lambda)_J,E(\mu)_K)\ne 0$, then $\lambda=\mu$ and $J\subseteq K$. Moreover,  if $n>0$, we have   $\lambda=\mu$ and $J\subsetneq K$.
\end{proposition}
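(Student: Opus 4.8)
The plan is to prove both statements by reducing the computation of $\op{Ext}^n$ to properties of the standard objects $\nabla(\lambda)_L$, which are injective by Theorem~\ref{injectivemodule}, and the projective covers $\mathbb{M}(\lambda)_L$ from Proposition~\ref{enough projectives}. Since $\mathscr{X}(\bf G)$ is a highest weight category (Theorem~\ref{XGisHWC}) with enough injectives, I would first record that the $\nabla(\theta)_J$ form a full collection of indecomposable injectives and can be used to compute $\op{Ext}^n$ via injective resolutions of the second variable. The key structural input is that every injective $\nabla(\theta)_J$ has only finitely many composition factors (Corollary~\ref{finitelength}), and by Theorem~\ref{injectivemodule} the injective envelope $\mathcal{I}(\{\theta,w_K\})=\nabla(\theta)_K$ has a good filtration with sections $\nabla(\theta)_L$ for $L\supseteq K$, all sharing the same character $\theta$.

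For part~$(1)$, the plan is to argue by induction on $n$. The case $n=0$ is $\dim \op{Hom}_{\mathscr{X}(\bf G)}(M,N)<\infty$, which follows from Corollary~\ref{finitelength} since both objects have finite length and the simple objects are classified. For the inductive step I would take a short exact sequence $0\to N\to \mathcal{I}\to \mathcal{C}\to 0$ with $\mathcal{I}$ a finite direct sum of standard injectives $\nabla(\lambda)_L$ (the injective envelope of $N$, whose socle is the finite direct sum of simples in $\op{soc}(N)$), and then $\mathcal{C}\in\mathscr{X}(\bf G)$ has finite length by Corollary~\ref{finitelength}. The long exact sequence in $\op{Ext}^\bullet(M,-)$ gives
\begin{align*}
\op{Ext}^{n-1}_{\mathscr{X}(\bf G)}(M,\mathcal{C})\longrightarrow \op{Ext}^{n}_{\mathscr{X}(\bf G)}(M,N)\longrightarrow \op{Ext}^{n}_{\mathscr{X}(\bf G)}(M,\mathcal{I}),
\end{align*}
and since $\mathcal{I}$ is injective the rightmost term vanishes for $n\geq 1$, so $\op{Ext}^{n}(M,N)$ is a quotient of the finite-dimensional space $\op{Ext}^{n-1}(M,\mathcal{C})$ by the inductive hypothesis. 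This establishes finite-dimensionality for all $n$.

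For part~$(2)$, I would reduce at once to $M=E(\lambda)_J$, $N=E(\mu)_K$ simple. If $\lambda\neq\mu$ then $\op{Ev}(M)\cap\op{Ev}(N)=\emptyset$, so by Lemma~\ref{zeroExt2} (whose proof extends to all $n$ by dimension-shifting through an injective resolution built from the $\nabla(\mu)_L$, all of which have $\op{Ev}$ contained in the $W$-orbit of $\mu$) every $\op{Ext}^n$ vanishes; thus $\lambda=\mu$ is forced. For the constraint on $J,K$ the plan is to use the injective resolution $\nabla(\lambda)_K\to\cdots$ of $E(\lambda)_K$ whose terms are sums of $\nabla(\lambda)_L$ with $L\supseteq K$: by Proposition~\ref{nablaProp}(1),(2), $\op{Hom}(E(\lambda)_J,\nabla(\lambda)_L)\neq 0$ forces $J\subseteq L$, and tracking the Hom-complex shows a nonzero $\op{Ext}^n$ requires $J\subseteq K$, with the $n>0$ case giving the strict inequality $J\subsetneq K$ because $\op{Ext}^n_{\mathscr{X}(\bf G)}(E(\lambda)_J,E(\lambda)_J)=0$ for $n=1$ (Lemma~\ref{zeroExt3}) and, by dimension shifting, for all $n>0$.

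The main obstacle I expect is the higher-$n$ vanishing $\op{Ext}^n_{\mathscr{X}(\bf G)}(E(\theta)_J,E(\theta)_J)=0$ for all $n>0$ and the precise bookkeeping of which $L$ appear in the good filtration of each injective resolvent: Lemmas~\ref{zeroExt2} and~\ref{zeroExt3} are stated only for $n=1$, so the heart of the argument is the dimension-shifting that propagates these to arbitrary $n$, using that each syzygy stays inside the block of characters $W$-equivalent to $\theta$ and that the standard sections $\nabla(\theta)_L$ appearing satisfy $L\supsetneq J$. The orthogonality relation $\op{Hom}(\mathbb{M}(\theta)_J,\nabla(\theta)_L)=\delta_{J,L}\,\Bbbk$, which follows from Proposition~\ref{enough projectives} together with Proposition~\ref{nablaProp}, should make this bookkeeping clean once it is set up correctly.
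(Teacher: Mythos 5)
Your plan is correct, and it is worth noting that the paper itself offers no proof of this proposition: it simply states that the results of \cite[Section 4]{D1} carry over and refers the reader there. What you have written is a self-contained reconstruction using only facts established in this paper, and the logic holds up. For part $(1)$, the base case follows from finite length together with $\op{End}(E(\theta)_J)=\Bbbk$ (an endomorphism of a highest weight module is determined by its scalar action on the one-dimensional space $M_{\{\theta,w_J\}}$), and the inductive step via the long exact sequence for $0\to N\to \mathcal{I}\to\mathcal{C}\to 0$ with $\mathcal{I}$ a finite sum of $\nabla(\lambda)_L$'s is exactly right. For part $(2)$, the key point you correctly identify is that in a minimal injective resolution of $E(\lambda)_K$ the degree-zero term is $\nabla(\lambda)_K$ and every term in degree $n\geq 1$ is a sum of $\nabla(\lambda)_L$ with $L\subsetneq K$ (since the cosyzygies have all composition factors of the form $E(\lambda)_X$ with $X\subsetneq K$, by Proposition \ref{nablaProp}(2)); combined with the fact that $\op{Hom}(E(\lambda)_J,\nabla(\mu)_L)\neq 0$ forces $\lambda=\mu$ and $J=L$ (the socle of $\nabla(\mu)_L$ is simple), this gives both conclusions at once. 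One small criticism: your closing sentence makes the vanishing of higher self-extensions sound like an input obtained by ``dimension-shifting from Lemma \ref{zeroExt3}'', whereas it is really an output of the resolution bookkeeping you have already set up ($J=L\subsetneq K=J$ is impossible); dimension shifting alone does not propagate self-$\op{Ext}$ vanishing from degree one to higher degrees. The orthogonality relation $\op{Hom}(\mathbb{M}(\theta)_J,\nabla(\theta)_L)\cong\delta_{J,L}\Bbbk$ is true but not needed for the argument as structured.
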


For a fixed $\theta\in \widehat{\bf T}$, let $\mathscr{X}({\bf G})_\theta$ be the subcategory of $\mathscr{X}({\bf G})$ containing the objects whose subquotients are $E(\theta)_J$ for some $J\subset I(\theta)$.
Then by Proposition \ref{Extension}, we have $\mathscr{X}({\bf G})= \displaystyle \bigoplus_{\theta\in \widehat{\bf T}} \mathscr{X}({\bf G})_\theta$. For each $\theta\in \widehat{\bf T}$, $\mathscr{X}({\bf G})_\theta$ is a highest weight category and then there exists a finite-dimensional quasi-hereditary algebra $A_\theta$ such that $\mathscr{X}({\bf G})_\theta$ is equivalent to the right $A_\theta$-modules.
Indeed, if we set $ \mathscr{I}_{\theta}=\displaystyle\bigoplus_{J\subset I(\theta)}\nabla(\theta)_J$, then $A_\theta\cong \op{End}_{\mathscr{X}({\bf G})}(\mathscr{I}_{\theta})$. The functor $\op{Hom}_{\bf G}(-, \mathscr{I}_{\theta})^*$ form $\mathscr{X}({\bf G})_\theta$ to the right $A_\theta$-modules is an equivalence of categories. Therefore we also see that  the category $\mathscr{X}({\bf G})$ is a Krull-Schmidt category.

Assume that $X$ is a finite set with  cardinality $|X|=n\geq 1$.
Denote by $M_{2^n}(\Bbbk)$ the matrix algebra over $\Bbbk$. The rows and columns of a matrix in this algebra are indexed by the subsets of $X$.
Fix an order of the subsets of $X$, we let $$\mathscr{A}_n= \{ (a_{Y,Z})\in M_{2^n}(\Bbbk) \mid a_{Y,Z}=0 \ \text{if}~Y ~\text{is not a subset of } Z \}$$
which is a subalgebra of the matrix algebra $ M_{2^n}(\Bbbk)$.  The algebra $\mathscr{A}_n$ just depends on the cardinality of $X$.
Then $A_\theta \cong \mathscr{A}_n $ as $\Bbbk$-algebras where  $n=|I(\theta)|$. Thus for $\lambda, \mu\in \widehat{\bf T}$,  if $|I(\lambda)|=|I(\mu)|$, then $A_\lambda \cong A_\mu$ as $\Bbbk$-algebras and thus $\mathscr{X}({\bf G})_\lambda$ is equivalent to  $\mathscr{X}({\bf G})_\mu$.

\begin{proposition} \cite[Proposition 5.1]{D1} \label{typealgebra}
The  algebras $\mathscr{A}_1,  \mathscr{A}_2$ are of finite type and $\mathscr{A}_3$ is of tame type.  For $n\geq 4$,  $\mathscr{A}_n$ is  of wild type.
\end{proposition}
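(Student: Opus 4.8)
The plan is to identify $\mathscr{A}_n$ with the incidence algebra of the Boolean lattice $B_n=2^{[n]}$, that is, the bound quiver algebra of the $n$-cube (vertices the subsets of $[n]$, arrows the covering relations $Y\to Y\cup\{a\}$) modulo all commutativity relations of its two-dimensional faces. Indeed, the matrix units $E_{Y,Z}$ with $Y\subseteq Z$ form a $\Bbbk$-basis satisfying $E_{Y,Z}E_{Z,W}=E_{Y,W}$, so $\mathscr{A}_n$ is exactly this incidence algebra. Since the commutativity relations fill every square face of the cube, and the same holds for each convex subposet, $\mathscr{A}_n$ is a triangular, strongly simply connected algebra. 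This is the decisive structural point, because for strongly simply connected algebras the representation type is governed by the Tits quadratic form $q=q_{\mathscr{A}_n}$: by Bongartz's criterion $\mathscr{A}_n$ is representation-finite iff $q$ is weakly positive, and by the Tits-form characterization of tameness (de la Peña--Skowro\'nski) it is tame iff $q$ is weakly nonnegative. I would therefore reduce the whole trichotomy to an analysis of $q$.

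Writing $x=(x_Y)_{Y\subseteq[n]}$, the Tits form reads
\[
q(x)=\sum_{Y}x_Y^2-\sum_{Y,\,a\notin Y}x_Y\,x_{Y\cup\{a\}}+\sum_{Y,\,\{a,b\}\cap Y=\emptyset,\ a<b}x_Y\,x_{Y\cup\{a,b\}},
\]
the last sum running over the square faces (one minimal relation per face, contributing $\dim\mathrm{Ext}^2$). The four cases then become form-theoretic computations. For $n=1,2$ one checks directly that $q$ is positive definite, hence weakly positive, so $\mathscr{A}_1,\mathscr{A}_2$ are representation-finite. For the infinite cases the key observation is a single explicit vector: let $x$ be the indicator of the ranks $1$ and $2$, so $x_Y=1$ when $1\le|Y|\le2$ and $x_Y=0$ otherwise. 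Counting $\binom n1$ singletons, $\binom n2$ pairs and $n(n-1)$ arrows between them (no relation lands inside these two ranks), one obtains $q(x)=n+\binom n2-n(n-1)=\tfrac{n(3-n)}{2}$.

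Hence for $n=3$ this vector is isotropic, $q(x)=0$, so $q$ is not weakly positive and $\mathscr{A}_3$ is representation-infinite; while for every $n\ge4$ one has $q(x)=\tfrac{n(3-n)}{2}<0$ on a nonnegative vector, so $q$ is not weakly nonnegative and $\mathscr{A}_n$ is wild. It then remains only to show that $\mathscr{A}_3$ is tame rather than wild, i.e.\ that $q_{\mathscr{A}_3}$ is weakly nonnegative. A convenient bookkeeping device along the way is that for a convex subset $S\subseteq B_n$ the idempotent truncation $e_S\mathscr{A}_n e_S$ (equivalently the quotient by the complementary idempotent) is again an incidence algebra; taking $S$ an interval $[\emptyset,T]$ with $|T|=m$ exhibits $\mathscr{A}_m$ as a quotient of $\mathscr{A}_n$, so wildness propagates upward and $B_3\subset B_n$ confirms representation-infiniteness for all $n\ge3$ without recomputation.

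The hard part will be the $n=3$ boundary. Verifying that the eight-variable form $q_{\mathscr{A}_3}$ is weakly nonnegative is the genuine computation: one must show $q(x)\ge0$ for every nonnegative $x$, which I would organize by a finite reduction over support patterns (using that the only nonnegative radical vectors are multiples of the rank-$\{1,2\}$ indicator), or alternatively by exhibiting $\mathscr{A}_3$ as a tame concealed or tubular algebra so that tameness follows from its tilting type. I would also want to pin down explicitly the structural claim that $\mathscr{A}_n$ is strongly simply connected, since the entire argument rests on the applicability of the Tits-form criteria; this is routine for Boolean lattices but should be recorded. By contrast, the positive-definiteness of $q$ for $n=1,2$ and the single identity $q(x)=\tfrac{n(3-n)}{2}$ are the easy, mechanical parts.
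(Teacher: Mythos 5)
The paper itself contains no proof of this proposition --- it is quoted from \cite[Proposition 5.1]{D1} --- so there is no internal argument to compare yours against; I therefore assess the proposal on its own terms. Much of it is sound: the identification of $\mathscr{A}_n$ with the incidence algebra of the Boolean lattice is correct, the Tits form is set up correctly (one minimal relation per square face), and the computation $q(x)=\tfrac{n(3-n)}{2}$ on the rank-$\{1,2\}$ indicator is right. Moreover the implications you need in the ``negative'' direction --- representation-finite $\Rightarrow$ $q$ weakly positive, and tame $\Rightarrow$ $q$ weakly nonnegative --- hold for arbitrary triangular algebras by de la Pe\~na's dimension estimates on module varieties, so your single vector genuinely proves that $\mathscr{A}_3$ is representation-infinite and that $\mathscr{A}_n$ is wild for $n\ge 4$ (modulo the tame--wild dichotomy), and the cases $n=1,2$ are unproblematic.

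The genuine gap is the structural claim on which you hang the remaining, hardest step: $\mathscr{A}_n$ is \emph{not} strongly simply connected for $n\ge 3$. The middle two ranks of $2^{[3]}$, namely $\{1\},\{2\},\{3\},\{1,2\},\{1,3\},\{2,3\}$, form a convex subposet which is a crown; its incidence algebra is the hereditary algebra of a quiver with underlying graph $\widetilde{A}_5$ (a hexagon with alternating orientation and no relations), whose fundamental group is $\mathbb{Z}$. By Dr\"axler's criterion an incidence algebra is strongly simply connected exactly when the poset contains no crown, so strong simple connectedness fails for every $n\ge 3$. This blocks precisely the step you identify as the hard one: the Br\"ustle--de la Pe\~na--Skowro\'nski equivalence ``tame $\Leftrightarrow$ $q$ weakly nonnegative'' is available only for strongly simply connected algebras, so even a complete verification that $q_{\mathscr{A}_3}$ is weakly nonnegative would not establish tameness of $\mathscr{A}_3$. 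Your fallback --- exhibiting $\mathscr{A}_3$ as tame concealed or tubular --- is also impossible, since those classes have global dimension at most $2$ while $\operatorname{gl.dim}\mathscr{A}_3=3$ (the order complex of the open interval $(\emptyset,[3])$ is a circle, whence $\operatorname{Ext}^3(S_{\emptyset},S_{[3]})\ne 0$). Thus the one delicate assertion of the proposition, tameness of $\mathscr{A}_3$, is not reachable by the route you propose and requires a different tool (covering theory, a cleaving or matrix-problem reduction, or locating the cube in the known classifications of tame incidence algebras of posets).
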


\bigskip

\noindent{\bf Acknowledgements} The author is grateful to  Nanhua Xi,  Xiaoyu Chen, Zongzhu Lin and Tao Gui  for their suggestions and helpful discussions. The work is sponsored by Shanghai Sailing Program (No.21YF1429000) and NSFC-12101405.

\medskip

\noindent{\bf Statements and Declarations}  The author declares that he has no conflict of interests with others.

\bigskip

\bibliographystyle{amsplain}

\end{document}